\documentclass[11pt]{article}
\usepackage[bb=dsserif]{mathalpha}
\usepackage{fancyhdr}   
\usepackage{comment}
\usepackage[colorlinks=true,linkcolor=blue,citecolor=blue,urlcolor=blue]{hyperref}
\fancypagestyle{plain}{
    \lhead{}
    \fancyhead[R]{\thepage}
    \fancyhead[L]{}
    
    \fancyfoot{}
}
\usepackage{bm}

\usepackage{array}
\usepackage[utf8]{inputenc} 
\usepackage[T1]{fontenc}    
\usepackage{hyperref}       
\usepackage{nicefrac} 
\newcommand{\Spvek}[2][r]{%
  \gdef\@VORNE{1}
  \left(\hskip-\arraycolsep%
    \begin{array}{#1}\vekSp@lten{#2}\end{array}%
  \hskip-\arraycolsep\right)}

\def\vekSp@lten#1{\xvekSp@lten#1;vekL@stLine;}
\def\vekL@stLine{vekL@stLine}
\def\xvekSp@lten#1;{\def\temp{#1}%
  \ifx\temp\vekL@stLine
  \else
    \ifnum\@VORNE=1\gdef\@VORNE{0}
    \else\@arraycr\fi%
    #1%
    \expandafter\xvekSp@lten
  \fi}
\usepackage{algorithm}
\usepackage{algorithmic}
\pagestyle{fancy}
\fancyhead[R]{\thepage}
\fancyhead[L]{}

\fancyfoot{}
\setlength{\topmargin}{-0.5in}
\setlength{\textheight}{9in}
\setlength{\textwidth}{6.1in}
\setlength{\oddsidemargin}{0.2in}
\setlength{\evensidemargin}{0in}
\setlength{\headheight}{13.6pt}
\usepackage{graphicx}
\usepackage{amsfonts}
\usepackage{amsmath}
\usepackage{url}
\usepackage{amsthm}
\usepackage{amssymb}
\usepackage{enumerate}
\usepackage{lmodern}
\usepackage{multirow}
\usepackage{subcaption}
\usepackage{graphicx}
\usepackage{graphicx}
\usepackage{amsfonts}
\usepackage{amsmath}
\usepackage[utf8]{inputenc}
\usepackage{url}
\usepackage{color}
\usepackage{amsthm}
\usepackage{amssymb}
\usepackage[normalem]{ulem}
\usepackage{enumerate}
 \usepackage{paralist}
\usepackage{graphicx}

\usepackage{bbm}
\usepackage{mathtools}
\newcommand{\R}{\mathbb{R}}
\newcommand{\N}{\mathbb{N}}
\RequirePackage{amsthm,amsmath,amsfonts,amssymb}
\RequirePackage{graphicx}
\usepackage{microtype}  
\usepackage{times}
\usepackage{mathrsfs}
\usepackage{mathtools}
\usepackage{transparent}
\usepackage{graphicx}
\usepackage{wrapfig}
\usepackage{relsize,exscale}
\usepackage{url}
\usepackage{amsthm}
\usepackage{amssymb}
\newtheorem {Proposition}{Proposition}[section]
\newtheorem {Lemma}[Proposition] {Lemma}
\newtheorem {Theorem}[Proposition]{Theorem}

\newtheorem*{Theorem*}{Theorem}
\newtheorem {Corollary}[Proposition]{Corollary}

 \newtheorem{Assumption}{Assumption}
 

\def\N{\mathbb{N}}

\def\R{\mathbb{R}}

\def\z{\mathbf{z}} 
\def\v{\mathbf{v}} 
\def\x{\mathbf{x}}
\def\y{\mathbf{y}}
\def\u{\mathbf{u}}

\usepackage{xcolor}
\newcommand*{\ag}{\textcolor{magenta}}

\usepackage{relsize}
\usepackage{tikz}

\makeatletter
\def\namedlabel#1#2{\begingroup
   \def\@currentlabel{#2}%
   \label{#1}\endgroup
}
\providecommand{\diff}{\mathrm{d}}
\makeatother


\def\N{\mathbb N}

\def\x{\mathbf x}
\def\z{\mathbf z}

\def\y{\mathbf y}

\newcommand{\eps}{\varepsilon}

\DeclarePairedDelimiterX{\inner}[2]{\langle}{\rangle}{{#1},{#2}}

\makeatletter
\newcommand*{\transp}{%
	{\mathpalette\@transpose{}}%
}
\newcommand*{\@transpose}[2]{%
	\raisebox{\depth}{$\m@th#1\intercal$}%
}
\makeatother

\usepackage{minitoc}
\usepackage{enumerate}
\usepackage{bbm}
\usepackage[T1]{fontenc}    
\usepackage{lmodern}        
\usepackage{hyperref}       
\usepackage{url}            
\usepackage{booktabs}       

\usepackage{amsfonts}       

\usepackage{nicefrac}       
\usepackage{microtype}      
\usepackage{lipsum}
\usepackage{graphicx}
\usepackage{color}
\graphicspath{{./images/}}
{
     \theoremstyle{plain}
     
}
\usepackage[sectionbib,sort,compress]{natbib}
\bibliographystyle{chicago}

\usepackage{chapterbib}



\title{Infinitesimal behavior of Quadratically Regularized Optimal Transport and its relation with the Porous Medium Equation}
\author{
  Alejandro Garriz-Molina\thanks{Departamento de Matemática Aplicada, Universidad de Granada, Granada, Spain}
  \and
  Alberto González-Sanz\thanks{Department of Statistics, Columbia University, New York, USA.  Email: ag4855@columbia.edu}
  \and
  Gilles Mordant\thanks{Institut für Mathematische Stochastik, Universität Göttingen. Göttingen, Germany. Email: gilles.mordant@uni-goettingen.de}
}
\date{}
\begin{document}
\maketitle
 \begin{abstract}
 The quadratically regularized optimal transport problem has recently been considered in various applications where the coupling needs to be \emph{sparse}, i.e., the density of the coupling needs to be zero for a large subset of the product of the supports of the marginals. However, unlike the acclaimed entropy-regularized optimal transport, the effect of quadratic regularization on the transport problem is not well understood from a mathematical standpoint. In this work, we take a first step towards its understanding. We prove that the difference between the cost of optimal transport and its regularized version multiplied by the ratio $\varepsilon^{-\frac{2}{d+2}}$
  converges to a nontrivial limit as the regularization parameter $\varepsilon$ tends to 0. The proof  confirms a conjecture from \cite{zhang2023manifold} where it is claimed that a modification of the self-similar solution of the porous medium equation, the Barenblatt--Pattle solution, can be used as an approximate solution of the regularized transport cost for small values of $\varepsilon$.
  \end{abstract}
\section{Introduction and main result}

Optimal transport has an extremely rich history that started with Monge more than two centuries ago \citep{monge1781memoire}. The problem is, given two  positive measures $\rho_0$ and $\rho_1$ with the same finite mass defined on $\Omega_0$ and $\Omega_1$ and a cost function from $\Omega_0\times \Omega_1$ to $[0, \infty)$, to find the optimal way of transporting one measure onto the other at the minimal overall cost. That is, the optimal transport cost between two probability measures having densities $\rho_0$ and $ \rho_1$ with respect to the $d$-dimensional Lebesgue measure  $\ell_d$ is defined as 
\begin{equation}
\label{eq: OT}
 \mathcal{W}_2^2(\rho_0, \rho_1) = \inf_{\pi\in \Pi(\rho_0, \rho_1)} \int \|\x-\y\|^2 \diff\pi(\x,\y), 
 \end{equation}
where $\Pi(\rho_0, \rho_1)$ denotes the set of probability measures $\pi\in \mathcal{P}(\R^d\times \R^d)$ such that 
\[
 \int f(\x) \diff\pi(\x, \y)= \int f(\x) \diff\rho_0(\x) \quad \text{ and } \quad \int f(\y) d\pi(\x, \y)= \int f(\y) \diff \rho_1(\y) , 
 \]
for any bounded continuous function $f\in \mathcal{C}_B(\R^d)$. In the case where $\rho_1$ is absolutely continuous with respect to the Lebesgue measure $\ell_d$, the solution $\tilde{\pi}$ of \eqref{eq: OT} is unique and of the form $ \tilde{\pi} =( \nabla g\times {\rm Id})_\# \rho_1  $, where  $g$ is a convex function (see \cite{brenier1991polar,CuestaMatran}). In such a case, it is said that $\nabla g$ pushes $\rho_1$ forward to $\rho_0$.       We refer to the two monographs by \cite{Villani2003,Villani2008} for a detailed exposition.

We highlight that, in a common abuse of notation, we will refer---when ambiguity is absent---to the densities $\rho_0$ and $\rho_1$ and to the measures defined by them often by just speaking about the measures $\rho_0$ and $\rho_1$ or $\diff \rho_0$ and $\diff \rho_1$. 

After a series of fundamental theoretical developments, among which those of Kantorovich, \citet{brenier1991polar,mccann95,CuestaMatran} and many more, 
 the interest for optimal transport exploded in recent years as its usefulness for various application domains such as Machine Learning \citep{courty,DeLaraDiffeo,gordaliza2019obtaining}, Statistics \citep{Carlier2016,Hallin2020DistributionAQ,mordant2022measuring,hundrieser2023empirical,del2024nonparametric,chewi2024statistical}, Biology \citep{schiebinger2019optimal,Tameling2021,gonzalez2023two}, Astronomy \citep{Levy_2021} or Economy \citep{Galichon2016}  got acknowledged. Note that it became impossible to list all the contributions to the field.

Moving away from the classical framework, a recent line of research has been inspired by new computational developments available for a particular relaxation of the optimal transport problem. The latter relaxation consists in  adding a (convex) penalty to the initial problem given in equation~\eqref{eq: OT} above. A particular such penalty, spearheaded by \citet{cuturi2013sinkhorn} for its easy computability on GPU, relies on the addition of a Kullback--Leibler regularization term
in the initial objective function, i.e., 
$$ \mathcal{T}_{2, \eps, KL}(\rho_0, \rho_1)= \inf_{\pi\in \Pi(\rho_0, \rho_1)} \int \|\x-\y\|^2d\pi(\x,\y)+\eps\,  {\rm KL}\big(\pi\vert (\rho_0 \otimes \rho_1)\big), $$
where $(\rho_0 \otimes \rho_1)$ denotes the product measure and ${\rm KL}(\alpha \vert \beta)=\int \log\left( \diff \alpha /\diff \beta\right)\diff \alpha $ if $\alpha$ is absolutely continuous w.r.t. $\beta$  (i.e., $\alpha\ll\beta$) and ${\rm KL}(\alpha \vert \beta)=+\infty$ otherwise. Here ${\diff \alpha /\diff \beta}$ stands for the Radon–Nikodym derivative of $\alpha$ w.r.t. $\beta$. Conversely, given a positive function $\omega$ we denote as $ \omega\diff \beta $ the measure having Radon–Nikodym derivative $\omega$ w.r.t. $\beta$.
The addition of this penalty term  enables the use of a matrix-scaling algorithm: the Sinkhorn algorithm \citep{sinkhorn1964relationship}. A plethora of works and applications followed, see \citet{MAL-073}  for a book-long exposition.

Studying the infinitesimal behavior of the difference in the regularized cost  as the regularization parameter $\eps$ tends to 0, \cite{pal2019difference} proved the limit 
\begin{equation}
    \label{eq:Pal}
    \lim_{\eps\to 0^+}\frac{\mathcal{T}_{2, \eps, KL}(\rho_0, \rho_1)- \mathcal{W}_2^2(\rho_0, \rho_1)- \frac{d}{2}\,\log(\pi \eps)}{\eps}= \frac{\int \log(\rho_0) d\rho_0-\int \log(\rho_1) d\rho_1 }{2}
\end{equation}
using a Gaussian approximation of the minimizers of the regularized problem. This is a natural approximation from a heuristic point of view, given that Gaussian distributions are the fundamental solutions of the heat equation, whose Wasserstein gradient flow is over the Kullback--Leibler entropy \citep{Otto_2001}.

Just like in the heat equation, whose  solution has maximum support for any positive time, the solutions of the optimal transport problem regularized by logarithmic entropy share the same property. That is, for every $\eps>0$, the resulting coupling  is  dense, i.e., each point on the product space $\Omega_0 \times \Omega_1$ has a positive density. On the contrary, the (unregularized) optimal transport plan for the squared Euclidean distance as a cost is concentrated on the gradient of a convex function.   To remedy this, \citet{blondel2018smooth} introduced the Quadratically Regularized Optimal Transport (QOT)
\begin{equation}
    \label{primalQuad}
    \mathcal{T}_{2, \varepsilon, (\cdot)^2}(\rho_0, \rho_1) = \inf_{\pi\in \Pi(\rho_0, \rho_1)} \int \|\x-\y\|^2\diff \pi(\x,\y)+\eps  \left\| \frac{\diff \pi}{  \diff (\rho_0 \otimes \rho_1) }\right\|_{L^2( \rho_0 \otimes \rho_1)}^2,
\end{equation}  
where  
$$ \left\| \frac{\diff \pi}{  \diff (\rho_0 \otimes \rho_1) }\right\|_{L^2( \rho_0 \otimes \rho_1)}^2=\int \left( \frac{\diff \pi}{  \diff (\rho_0 \otimes \rho_1)}\right)^2 \diff (\rho_0 \otimes \rho_1) $$
in the case where $ {\diff \pi}\ll{  \diff (\rho_0 \otimes \rho_1) }$, and $+\infty$ otherwise. 
Applications of QOT have since then started blossoming. Examples include \citet{essid2018quadratically,zhang2023manifold, van2023optimal,mordant2023regularised}. For some theoretical results we refer to \cite{nutz2024quadratically}. \citet{blondel2018smooth} observed that the minimizer of \eqref{primalQuad} is  {\it ``sparse''} in the sense that its topological support seems to increase progressively as $\eps$ gets larger. 
This increase of a sparse support suggests a diffusion behavior like that of the Porous Medium Equation (PME) where epsilon would play the role of the time parameter. For this reason, before setting this parallel on solid ground, we provide a short introduction to the PME in the next section.

\subsection{On the Porous Medium Equation and the QOT }

The Porous Medium Equation posed in the whole space reads, for an initial datum $u_0$,
\[
\begin{cases}
    \partial_t u(t,\x) =\Delta u(t,\x)^m,\quad & t>0,\ \x\in \mathbb{R}^d\\
    u(0,\x)=u_0(\x) & x\in \mathbb{R}^d,
\end{cases}
\]
where $m>1$ is the non-linearity exponent. A mathematical study for this equation appeared first, in the case $m=2$, in the study of groundwater infiltration by~\cite{boussinesq_1903} and since then it has attracted the attention of numerous experts. The general case $m>1$ can be derived from the study of an an-isotropic gas in a porous medium (hence the name) subject to Darcy's Law, see the seminal (and independent of each other) works of~\cite{leibenzon_1930} and~\cite{muskat_1937}. Since then, it has enjoyed a continuous attention from the mathematical community due to its applications in many fields, like plasma radiation, biology, population theory or even cosmology. We refer the interested reader to the book of~\cite{vazquez_2007}, broadly considered as the go-to reference on this matter. Therein, the reader can find a more detailed explanation of the main characteristics of the PME that are relevant for the study of the QOT. Even though in this article we focus on the case $m=2$, we will discuss the general case $m>1$.

The first property that leaps to the eye is the fact that the PME is non-linear, i.e., the sum of two solutions of the equation is not a solution itself. This makes it impossible for the existence of a linear description of its solutions similar to the heat equation, where the solution can be expressed as the convolution of the initial datum $u_0$ and the fundamental solution, the self-similar Gaussian profile (also referred as the kernel of the equation). The solution of the PME cannot be expressed as the convolution of the initial datum  with any kernel but, nevertheless, there exists a fundamental solution $\mathcal{B}(t,x)$ of the PME, in the sense that
\[
\begin{cases}
    \partial_t \mathcal{B}(t,\x) =\Delta \mathcal{B}(t,\x)^m,\quad & t>0,\ x\in \mathbb{R}^d\\
    \mathcal{B}(0,\x)=\delta_0(\x) & \x\in \mathbb{R}^d,
\end{cases}
\]
where $\delta_0(\x)$ is Dirac's Delta function located at $\x=0$ and the previous equalities should  be understood in the weak sense (or integral sense). This fundamental solution is called Barenblatt or Barenblatt--Kompaneets--Zeldovich profile due to its discoverers~\cite{barenblatt_1952} and~\cite{zeldovich_kompaneets_1903}. It reads
\begin{equation}\label{eq:barenblatt_profile}
\mathcal{B}(t,\x)=\frac{1}{t^\alpha}\left[C- \beta\frac{m-1}{2m}\frac{\|\x\|^2}{t^{2\beta}} \right]_+^{\frac{1}{m-1}},\quad \text{where}\quad \alpha=\frac{d}{d(m-1)+2},\  \beta= \frac{1}{d(m-1)+2},
\end{equation}
and $C>0$ is a free constant that dictates the mass of the profile.
Contrarily to the Kullback--Leibler regularisation case, we cannot work with linear tools, i.e., semigroup theory. Still,  the profile of the solution above will play a key role in our work, see formula~\eqref{relationPrimalDual} to mention but one example. 

The next property that is relevant in our study is the \textit{slow} nature of the diffusion in the PME. If one writes the equation in terms of its diffusivity $D(u)$ then we see that
\[
\partial_t u={\rm div}\left(D(u)\cdot \nabla u \right),\quad \text{with}\quad D(u)=m u^{m-1}
\]
and therefore the diffusivity disappears whenever $u=0$; in other words, our equation is degenerate parabolic at the points where $u=0$. This provokes the phenomena known as \textit{slow diffusion}, where every solution whose initial datum has compact support will maintain a compact support that grows over time, in sharp contrast with the heat equation, where every solution becomes positive everywhere at any positive time. This characteristic of the PME relates closely to the \textit{sparse} nature of the minimizer in the QOT observed by~\citet{blondel2018smooth}.

Finally, there is one last connection between the PME and the QOT that we would like to mention.~\cite{Otto_2001} explained how that the PME could be understood as a gradient flow with the 2-Wasserstein metric of the functional formed by the second moment of the solution and its free energy, namely
\begin{equation}\label{eq:entropy}
E(u)=\int \frac{1}{2}\|\x\|^2u(\x) + \frac{1}{m-1}u(\x)^m\ {\rm d}\ell_d(\x),
\end{equation}
see also~\cite{zhang2023manifold} for some applications. More or less at the same time~\cite{Carrillo_Toscani_2000_AsymptoticLO} followed this approach, using $E(u)$ as the natural entropy of the system, to describe the large-time behavior of the solutions of the problem. Note that  the resemblance between~\eqref{primalQuad} and ~\eqref{eq:entropy} in the case $m=2$. Quadratically regularized optimal transport thus mimics the minimisation of the free energy just defined under the coupling constraints. As we shall see later, this last observation is where the geometry of unregularized optimal transport comes into the picture and plays a crucial role.

\subsection{Assumptions and main results}

The existence of a potential link between QOT and the porous medium equation was considered folklore in the community in the last years even though never rigorously established. The first developments clearly fleshing this out were proposed by \cite{zhang2023manifold}. Their work further suggests that, by using Barenblatt profile approximations instead of Gaussian approximations (for the case of the Kullback--Leibler regularisation studied by \cite{pal2019difference}), one could characterize the non-trivial limit of 
\[
r(\eps)  \big(\mathcal{T}_{2, \varepsilon, (\cdot)^2}(\rho_0, \rho_1) -\mathcal{W}_2^2(\rho_0, \rho_1)  \big)
\]   
for a proper choice of $r(\eps)$. Via a proof   based on quantization, \cite{Eckstein2023} determined the value of the ratio $r(\varepsilon)$ as $r(\varepsilon) = \varepsilon^{-\frac{2}{d+2}}$. It is worth highlighting, however, that the nonlinear nature of the penalty in~\eqref{primalQuad} makes impossible to utilize the linear tools present in~\cite{pal2019difference} or most of the techniques present in the existing literature. The method we propose here is novel and relies mostly on the understanding of Barenblatt-like densities, inspired by~\eqref{eq:barenblatt_profile}, and on the clever usage of mass transportation theory. 

In this work, we prove the complete (implicit) conjecture of \cite{zhang2023manifold} with this ratio under the following assumptions. 
\begin{Assumption}\label{Assumptions}
We assume the following: 
\begin{enumerate}
    \item $\Omega_1$ and $\Omega_0$ be open bounded sets such that 
    there exists $\delta_0\in (0, \pi)$ and $\sigma>0$ such that for  every $i\in \{0,1\}$ and $\x\in \Omega_i$ there exists a cone
    $$\mathcal{C}_{\x, {\bf v},\delta_0, \sigma}=\left\{ \x+{\bf u}: \|{\bf v}\|\|{\bf u}\| \cos\left(\frac{1}{2}\theta\right) \leq \langle {\bf v} ,  {\bf u }
 \rangle \leq \|{\bf v}\|\delta_0  \right\} $$ 
    with vertex $\x$,  height $\delta_0$ and angle $\theta$ such that $\mathcal{C}_{\x, {\bf v}\delta_0, \sigma}\subset \Omega_i$.
    
    \item The functions $\rho_0\in \mathcal{C}^{\beta}(\bar{\Omega_0})$ and $\rho_1\in \mathcal{C}^{\beta}(\bar{\Omega_1})$ for some $\beta>0$ are densities defining  probability measures 
    $$ 1=\int_{\Omega_i} \rho_i(\x) \diff\ell_d(\x)=\int d\rho_i(\x)=\rho_i(\Omega), \quad i=1, 2.  $$
    \item Such  densities are upper lower bounded in their domain, i.e.,  
\begin{equation}\label{eq:UpperLower0}
    \lambda  \leq \rho_0(\x) \leq \Lambda \ {\rm and} \ \lambda  \leq \rho_1(\y) \leq \Lambda \quad \text{for all} \ \x\in \Omega_0 \ {\rm and} \ \y\in \Omega_1
\end{equation}
for some $0< \lambda \leq \Lambda < \infty$.
\item The unique gradient of a convex function $g$ pushing $\rho_1$ forward to $\rho_0$ belongs to $\mathcal{C}^{2, \alpha}(\Omega_1)$ and 
$$ \sigma_m(g)\, {\rm Id} \leq \nabla^2 g \leq  \sigma_M(g)\, {\rm Id} \quad {\rm in }\ \Omega_1  $$
for some $0<\sigma_m(g) \leq \sigma_M(g)< \infty$.
\end{enumerate}
\end{Assumption}
Note that Assumption~\ref{Assumptions}, 4. implies also that the convex conjugate of $g$, namely
$$ g^*(\x)=\sup_{\y\in \R^d}\{ \langle \x, \y\rangle - g(\y) \},$$
is $ \mathcal{C}^{2}(\Omega_0)$. Moreover, it is well known that $\nabla g^*$ pushes $\rho_0$ forward to $\rho_1$.  

The following is our main result. 


\begin{Theorem}\label{Theorem:Main}
    Let Assumption \ref{Assumptions} hold and $g$ be the unique function such that $\nabla g$ pushes $\rho_1$ forward to $\rho_0$.   Then it holds that
    $$ \lim_{\varepsilon\to 0^+}  \frac{\mathcal{T}_{2, \varepsilon, (\cdot)^2}(\rho_0, \rho_1) -\mathcal{W}_2^2(\rho_0, \rho_1) }{\varepsilon^{\frac{2}{d+2}}}  = {\frac{d^{\frac{d+4}{d+2}}(d+2)^{\frac{2}{d+2}}}{\left(\mathcal{H}^{d-1}(\mathcal{S}^{d-1}) \right)^{\frac{2}{d+2}}}}   \int_{\Omega_0} \big(\rho_0(\x)\rho_1[\nabla g^*(\x)]\big)^{-\frac{1}{(d+2)}}\diff \rho_0(\x),$$
\end{Theorem}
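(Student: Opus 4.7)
The plan is to establish matching upper and lower bounds on the excess $\mathcal{T}_{2,\varepsilon,(\cdot)^2}(\rho_0,\rho_1)-\mathcal{W}_2^2(\rho_0,\rho_1)$, both of order $\varepsilon^{2/(d+2)}$ with the explicit coefficient required by the statement. The scaling itself is dictated by a simple balance: if the Brenier coupling $(\mathrm{Id}\times\nabla g^*)_{\#}\rho_0$ is smeared on a transverse length scale $\delta$, the transport cost increases by $O(\delta^2)$ while the $L^2$-penalty weighted by $\varepsilon$ scales like $\varepsilon\,\delta^{-d}$; optimizing in $\delta$ yields $\delta\sim\varepsilon^{1/(d+2)}$ and an excess of order $\varepsilon^{2/(d+2)}$. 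This is exactly the spatial scale of the Barenblatt profile~\eqref{eq:barenblatt_profile} with $m=2$ at time $\varepsilon$, and the coincidence is not accidental: the local dependence of the optimal regularized density on $\|\y-\nabla g^*(\x)\|^2$ is the positive part of a quadratic, matching the $m=2$ Barenblatt shape.

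For the upper bound, I would produce an admissible coupling $\pi_\varepsilon\in\Pi(\rho_0,\rho_1)$ whose density with respect to $\rho_0\otimes\rho_1$ is, near the Monge graph $\{\y=\nabla g^*(\x)\}$, a local Barenblatt blob
\[
h_\varepsilon(\x,\y)\approx \frac{1}{2\varepsilon}\Bigl(\varepsilon^{\frac{2}{d+2}}c(\x)-(\y-\nabla g^*(\x))^{\top}A(\x)(\y-\nabla g^*(\x))\Bigr)_+,
\]
with $A(\x):=\nabla^2 g(\nabla g^*(\x))$, positive definite by Assumption~\ref{Assumptions}(4). The height $c(\x)$ is pinned by the first marginal constraint, which reduces to an elementary truncated-paraboloid integral and, using the Monge--Ampère identity $\det A(\x)=\rho_1(\nabla g^*(\x))/\rho_0(\x)$, identifies $c(\x)$ as an explicit multiple of $(\rho_0(\x)\,\rho_1(\nabla g^*(\x)))^{-1/(d+2)}$. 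The second marginal is not preserved automatically, so $h_\varepsilon$ must then be corrected by an additive perturbation of lower order in $\varepsilon$, obtained by a fixed-point argument on a suitable Banach space of densities that exploits the $\mathcal{C}^{2,\alpha}$-regularity of $g$ and the bounds $\lambda\le\rho_i\le\Lambda$. Substituting the corrected $h_\varepsilon$ into~\eqref{primalQuad} and Taylor expanding all integrands around the graph produces the upper bound.

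For the matching lower bound, I would work with the Fenchel dual of~\eqref{primalQuad},
\[
\mathcal{T}_{2,\varepsilon,(\cdot)^2}(\rho_0,\rho_1)=\sup_{\phi,\psi}\Bigl\{\int\phi\,\diff\rho_0+\int\psi\,\diff\rho_1-\frac{1}{4\varepsilon}\int\bigl(\phi(\x)+\psi(\y)-\|\x-\y\|^2\bigr)_+^2\,\diff(\rho_0\otimes\rho_1)\Bigr\},
\]
and test with $\phi=(\|\cdot\|^2-2g^*)+\varepsilon^{2/(d+2)}\tilde\phi$, $\psi=(\|\cdot\|^2-2g)+\varepsilon^{2/(d+2)}\tilde\psi$. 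The first summands are the Kantorovich potentials of the unregularized problem, whose $\rho_i$-integrals reproduce $\mathcal{W}_2^2$ exactly. Choosing $\tilde\phi(\x)+\tilde\psi(\nabla g^*(\x))=c(\x)$ with the same $c$ as in the upper bound, and using the second-order expansion of the gap $(\|\cdot\|^2-2g^*)(\x)+(\|\cdot\|^2-2g)(\y)-\|\x-\y\|^2$ at $\y=\nabla g^*(\x)+\delta$, which equals $-\delta^{\top}A(\x)\delta+O(|\delta|^3)$ by the $\mathcal{C}^{2,\alpha}$-regularity of $g$, reduces the penalty term to the very same Barenblatt integral already computed for the upper bound, giving a matching lower bound.

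The main obstacle is the rigorous control of remainder terms uniformly across $\Omega_0$. The Barenblatt blobs have radius $\sim\varepsilon^{1/(d+2)}$, and for $\x$ within that distance of $\partial\Omega_0$ or with $\nabla g^*(\x)$ close to $\partial\Omega_1$ the truncation becomes nontrivial; the uniform interior cone condition in Assumption~\ref{Assumptions}(1) confines the contribution of this boundary layer to a set of $\rho_0$-measure $O(\varepsilon^{1/(d+2)})$, yielding total error $O(\varepsilon^{3/(d+2)})=o(\varepsilon^{2/(d+2)})$. Simultaneously enforcing both marginals with an error subleading to $\varepsilon^{2/(d+2)}$ will rely on the $\mathcal{C}^{2,\alpha}$-regularity of $g$ together with the two-sided Hessian bounds $\sigma_m(g)\,\mathrm{Id}\le\nabla^2 g\le\sigma_M(g)\,\mathrm{Id}$ to invert the linearized marginal operator on a suitable function space. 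The most delicate step is the bookkeeping of higher-order corrections produced by the density gradients $\nabla\rho_0,\nabla\rho_1$ and by the $O(\varepsilon^{1/(d+2)})$ shift of each Barenblatt peak caused by the linear part of $\tilde\phi,\tilde\psi$: these terms are of the same $\varepsilon^{2/(d+2)}$ order as the leading contribution, and only their careful combination, using once more the Monge--Ampère identity to eliminate Hessian factors in favor of the density product, produces the explicit coefficient involving $\mathcal{H}^{d-1}(\mathcal{S}^{d-1})$ and $(\rho_0\,\rho_1\circ\nabla g^*)^{-1/(d+2)}$.
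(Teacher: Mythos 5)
Your lower bound is essentially the paper's: test the dual \eqref{dualQuadr1} with the unregularized Kantorovich potentials shifted by the Barenblatt height $C_\eps(\x)\propto \eps^{2/(d+2)}(\rho_0(\x)\rho_1[\nabla g^*(\x)])^{-1/(d+2)}$, reduce the penalty to the truncated-paraboloid integral of Proposition~\ref{Lemma:elementary} via the second-order expansion of the Bregman divergence, and absorb the boundary layer by an $\eps\to0$ then $\delta\to0$ argument. That part is sound.

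The genuine gap is in your upper bound, precisely where the paper's main technical effort lies. You pin the height $c(\x)$ by the first marginal and then assert that the second marginal can be restored by ``an additive perturbation of lower order in $\eps$, obtained by a fixed-point argument.'' As stated this does not close: the marginal defect of the raw Barenblatt blob is only $o(1)$ \emph{multiplicatively} (cf.\ Lemma~\ref{LemmaBoundsUpper}\ref{LemmaBoundsUpperPoint4}, $\|\rho_\eps-1\|_{\mathcal{C}}\to0$), so its total mass is $o(1)$ but not $o(\eps^{2/(d+2)})$. Any additive correction carrying that mass must itself be a (signed) measure with prescribed marginals, and its contribution to the transport term $\int\|\x-\y\|^2\diff\pi$ is controlled only if the correction moves mass over distances $o(\eps^{1/(d+2)})$ — otherwise it pollutes the $\eps^{2/(d+2)}$ order you are trying to resolve. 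The paper avoids this by \emph{composing} rather than adding: it symmetrizes the blob, normalizes, and pushes forward by $\nabla\phi_\eps\times\nabla\phi_\eps$ where $\nabla\phi_\eps$ transports $\rho_\eps\diff\rho_0$ to $\diff\rho_0$; Caffarelli's interior regularity (Theorem~\ref{Theorem:Caffarelli}) plus stability give $\|\nabla\phi_\eps-{\rm Id}\|_{\mathcal{C}^1(\Omega_0^\delta)}\to0$ (see \eqref{stabilty}), which is exactly the statement that the correction displaces mass by $o(1)\cdot\|\x-\x'\|$ \emph{relative to the blob radius}, so the leading term survives. Your fixed-point scheme would need an equivalent quantitative localization of the corrector, and you give no mechanism for it. Two further consequences of bypassing Caffarelli are left unaddressed: (i) interior regularity forces the split into $\Omega_0^\delta$ and a boundary ``frame'' where a separate non-optimal coupling (the paper's $h_{\eps,\delta}$, built from Lemma~\ref{Lemma:firstEstimates}) must be shown not to blow up at order $\eps^{2/(d+2)}$ — your claim that the boundary layer contributes $O(\eps^{3/(d+2)})$ is not how the error is actually controlled, since the frame has fixed width $\delta$ independent of $\eps$; and (ii) Caffarelli's theory requires convexity of the domain, which is why the paper needs the stained-glass tessellation of Section~\ref{section:General} for general $\Omega_0$; your proposal is silent on non-convex supports.
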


As a consequence of this result, we can write explicitly the relationship between solutions of the PME and the optimizers of the QOT. Define the energy 
  \[E_\eps(\pi):=   \int \frac{1}{2} \|\x-\y\|^2 \pi(\x,\y)+{\varepsilon} \cdot\pi^2(\x, \y) \diff (\rho_0 \otimes \rho_1)(\x,\y)
  \]
  and let $g$ be the unique function such that $\nabla g$ pushes $\rho_1$ forward to $\rho_0$. Define then, for any $\x'\in\Omega_0$,
  \[
  \varrho(\x'):=\rho_0\left( \left[\nabla^2g^*(\x')\right]^{-\frac{1}{2}}\x'\right)\cdot  \rho_1\left(\nabla g^*\left( \left[\nabla^2g^*(\x')\right]^{-\frac{1}{2}}\x'\right)\right).
  \]

Choose now any $\x'\in\Omega_0$ and let $u(t, \x; \x')$ be the solution of 
  \[
\begin{cases}
    \partial_t u(t,\x;\x') =\frac{1}{2(d+2)}\Delta_{\x} u(t,\x; \x' )^2,\quad & t>0,\ \x\in \Omega_0\\[10pt]
    u(0,\x; \x')=\varrho(\x')^{-\frac{1}{d+2}} \cdot \delta_{\x'}(\x) & \x\in \mathbb{R}^d,
\end{cases}
\]
where $\Delta_{\x}$ represents the Laplacian only in the $\x$'s variable, $\delta_{\x'}(\x)$ denotes the Dirac's Delta function on the point $\x=\x'$ and the previous solution must be understood in the weak sense. The constant $1/2(d+2)$ in front of the Laplacian operator can be considered just as a scaling factor in the $\x$ variable. The formula for $u$ reads
\[
u(t,\x;\x')=\frac{1}{t}\left(\frac{C_d\  t^{\frac{2}{d+2}}}{\varrho(\x')^{\frac{1}{d+2}}} -\frac{1}{2}\|\x-\x'\|^2 \right)_+
\]
where $C_d$ is just a normalizing constant depending only on the dimension of the space that will be stated later.

Next, to each $\x'\in\Omega_0$ and each solution $u$ of the previous PME it corresponds a transformed function
  \[
  \begin{aligned}
  v(t,\x;\x')&:= u\left(t, \left[\nabla^2g^*(\x')\right]^{\frac{1}{2}}\x; \left[\nabla^2g^*(\x')\right]^{\frac{1}{2}}\x'\right)\\[10pt]
  &= \frac{1}{t}\left(\frac{C_d\  t^{\frac{2}{d+2}}}{(\rho_0(\x')\rho_1(\nabla g^*(\x')))^{\frac{1}{d+2}}} -\frac{1}{2}\|\x-\x'\|^2_{\nabla^2 g^*(\x')} \right)_+,
  \end{aligned}
  \]
  where
  $$ 
    \|\x-\x' \|^2_{\nabla^2 g(\x)}= \langle \x-\x',  \nabla^2 g(\x)(\x-\x')\rangle, 
  $$ 
We are ready now to state the corollary.
\begin{Corollary}
    Under the assumptions of Theorem~\ref{Theorem:Main} let $\pi_{(\eps)}$ be the minimizer of \eqref{primalQuad} for $\eps>0$ and. Then it holds 
$$ \lim_{\varepsilon\to 0^+}  \varepsilon^{-\frac{2}{2+d}} \vert E_\eps(\pi_{(\eps)}(\x,\y))-E_\eps(v(\eps, \x, \nabla g(\y)))\vert=0 $$
\end{Corollary}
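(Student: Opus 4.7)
The plan is to compare $E_\varepsilon(v(\varepsilon,\cdot,\nabla g(\cdot)))$ and $E_\varepsilon(\pi_{(\varepsilon)})$ by computing the former directly and extracting the latter from the asymptotic of Theorem~\ref{Theorem:Main}. Decompose
\begin{equation*}
E_\varepsilon(\pi)=\tfrac{1}{2}T(\pi)+\varepsilon Q(\pi),\quad T(\pi):=\int\|\x-\y\|^2\pi\,d(\rho_0\otimes\rho_1),\quad Q(\pi):=\int\pi^2\,d(\rho_0\otimes\rho_1),
\end{equation*}
so that $\mathcal{T}_{2,\varepsilon,(\cdot)^2}(\rho_0,\rho_1)=T(\pi_{(\varepsilon)})+\varepsilon Q(\pi_{(\varepsilon)})$ and Theorem~\ref{Theorem:Main} gives $T(\pi_{(\varepsilon)})+\varepsilon Q(\pi_{(\varepsilon)})=\mathcal{W}_2^2+K\varepsilon^{2/(d+2)}+o(\varepsilon^{2/(d+2)})$, with $K$ the constant stated there.

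The first step is the direct computation of $E_\varepsilon(v)$. For each $\y\in\Omega_1$, the map $\x\mapsto v(\varepsilon,\x,\nabla g(\y))$ is a Barenblatt profile supported on an anisotropic ball of radius $O(\varepsilon^{1/(d+2)})$ about $\nabla g(\y)$; the cone condition of Assumption~\ref{Assumptions} ensures this ball lies in $\Omega_0$ for all sufficiently small $\varepsilon$. Rescaling by $\x=\nabla g(\y)+\varepsilon^{1/(d+2)}[\nabla^2 g^*(\nabla g(\y))]^{-1/2}\z$ reduces $\varepsilon v$ to a universal Barenblatt profile in $\z$, and Taylor expansion of $\tfrac{1}{2}\|\x-\y\|^2$ about $\nabla g(\y)$ and of $\rho_0$ about $\rho_0(\nabla g(\y))$, together with the closed-form integrals of $(1-\tfrac{1}{2}\|\z\|^2)_+^k$ on the unit ball for $k=1,2$ and the vanishing of the odd moments by radial symmetry, yield
\begin{equation*}
T(v)=\mathcal{W}_2^2+A\varepsilon^{2/(d+2)}+o(\varepsilon^{2/(d+2)}),\qquad \varepsilon Q(v)=B\varepsilon^{2/(d+2)}+o(\varepsilon^{2/(d+2)}),
\end{equation*}
with explicit $A,B$; using the Monge--Amp\`ere identity $\rho_0(\nabla g(\y))\det\nabla^2 g(\y)=\rho_1(\y)$ and the change of variables $\y\mapsto\nabla g^*(\x)$ rewrites the integrals in the form of Theorem~\ref{Theorem:Main} and shows $A+B=K$, so $E_\varepsilon(v)=\tfrac{1}{2}\mathcal{W}_2^2+\tfrac{1}{2}K\varepsilon^{2/(d+2)}+o(\varepsilon^{2/(d+2)})$.

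For the second step, I would import the splitting of $T(\pi_{(\varepsilon)})$ and $\varepsilon Q(\pi_{(\varepsilon)})$ from the local Barenblatt analysis inside the proof of Theorem~\ref{Theorem:Main}: on the fibre scale $\varepsilon^{1/(d+2)}$ around the Monge graph, the rescaled densities $\varepsilon\pi_{(\varepsilon)}$ and $\varepsilon v$ share the same Barenblatt limit, so the $\varepsilon^{2/(d+2)}$-coefficients of $T$ and of $\varepsilon Q$ at $\pi_{(\varepsilon)}$ coincide with those of $v$. This gives $E_\varepsilon(\pi_{(\varepsilon)})=\tfrac{1}{2}\mathcal{W}_2^2+\tfrac{1}{2}K\varepsilon^{2/(d+2)}+o(\varepsilon^{2/(d+2)})$, and subtracting the two expansions yields the corollary.

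The main obstacle is the matching of the individual pieces $T$ and $\varepsilon Q$ for $\pi_{(\varepsilon)}$, since Theorem~\ref{Theorem:Main} only controls the sum $T+\varepsilon Q$. The natural handle is strong convexity of $Q$ in $L^2(\rho_0\otimes\rho_1)$: producing a near-admissible modification $\tilde v$ of $v$ whose marginals match $(\rho_0,\rho_1)$ up to $E_\varepsilon$-error $o(\varepsilon^{2/(d+2)})$ and invoking the optimality of $\pi_{(\varepsilon)}$ gives $\varepsilon\|\pi_{(\varepsilon)}-\tilde v\|_{L^2(\rho_0\otimes\rho_1)}^2=o(\varepsilon^{2/(d+2)})$. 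This controls $\varepsilon|Q(\pi_{(\varepsilon)})-Q(v)|$ via Cauchy--Schwarz using $\|v\|_{L^2}=O(\varepsilon^{-d/(2(d+2))})$, while the transport-term matching relies additionally on the structural cancellation of the odd cross term in the Taylor expansion of $\|\x-\y\|^2$ about $\nabla g(\y)$ (exact for $v$ by radial symmetry, approximate for $\pi_{(\varepsilon)}$ via the fibrewise Barenblatt ansatz), which removes the otherwise lossy factor in the transport estimate.
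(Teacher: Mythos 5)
Your route is sound and is genuinely sharper than the paper's own treatment, which only sketches the argument: the authors split the difference into three terms $\mathcal{I}+\mathcal{II}+\mathcal{III}$, refer $\mathcal{I}$ and $\mathcal{II}$ back to Theorem~\ref{Theorem:Main}, and assert $\mathcal{III}\to 0$ ``by the same ideas'', omitting all details. The point you correctly isolate---and which the paper's sketch silently skips---is that Theorem~\ref{Theorem:Main} only controls the sum $T+\eps Q$, whereas $E_\eps=\tfrac12 T+\eps Q$ weights the two pieces differently, so one must match $T(\pi_{(\eps)})$ and $\eps Q(\pi_{(\eps)})$ with $T$ and $\eps Q$ of the comparison object \emph{separately}. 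Your strong-convexity step (first-order optimality of $\pi_{(\eps)}$ on the convex set $\Pi(\rho_0,\rho_1)$ gives $\eps\|\pi_{(\eps)}-\tilde v\|_{L^2}^2\le \mathbb{H}_\eps(\tilde v)-\mathbb{H}_\eps(\pi_{(\eps)})=o(\eps^{\frac{2}{d+2}})$, then Cauchy--Schwarz with $\|\tilde v\|_{L^2}=O(\eps^{-\frac{d}{2(d+2)}})$) is exactly the right tool and does not appear in the paper; moreover, once $\eps Q$ is matched, $T$ is matched automatically by subtracting from the matched sum, so your additional worry about the odd cross term in the transport piece is unnecessary. What you still owe is the comparison of the \emph{admissible} coupling $\tilde v$ (the paper's $v_\eps\,\diff(\rho_0\otimes\rho_0)$ pushed through $\mathrm{Id}\times\nabla g$) with the explicit profile $v$ itself in both $T$ and $\eps Q$; this is the paper's term $\mathcal{III}$ and genuinely requires rerunning the estimates of Section~\ref{section:upper} (Lemmas~\ref{lemma:symetric}, \ref{LemmaBoundsUpper}, \ref{Lemma:developmentDiveps} and \ref{Lemma:BoundH}), as you indicate but do not carry out.

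One harmless slip: from $T(v)=\mathcal{W}_2^2+A\eps^{\frac{2}{d+2}}+o(\eps^{\frac{2}{d+2}})$ and $\eps Q(v)=B\eps^{\frac{2}{d+2}}+o(\eps^{\frac{2}{d+2}})$ one gets $E_\eps(v)=\tfrac12\mathcal{W}_2^2+\bigl(\tfrac{A}{2}+B\bigr)\eps^{\frac{2}{d+2}}+o(\eps^{\frac{2}{d+2}})$, not coefficient $\tfrac12 K=\tfrac12(A+B)$; the two differ by $\tfrac{B}{2}\neq 0$. Since the corollary only asserts that the two evaluations of $E_\eps$ agree to $o(\eps^{\frac{2}{d+2}})$, and your matching is done coefficient by coefficient, this does not affect the conclusion, but the value you state for the common limit is incorrect.
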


We sketch a proof of the corollary for the interested reader. The details are omitted, as they can be readily deduced by repeating the  calculations provided in the proof of Theorem~\ref{Theorem:Main}.  Later on we will define a family of approximations $\pi_\varepsilon$ to the true minimizer $\pi_{(\varepsilon)}$. The key of these approximations is that, as $\varepsilon\to 0$,
\begin{equation}\label{eq:pi_v}
\pi_\varepsilon(\x,\y)\sim v(\varepsilon,\x;\nabla g(\y)).
\end{equation}
The idea consists then in writing, adding and subtracting terms,
\[
\begin{aligned}
\varepsilon^{-\frac{2}{2+d}}\left[ E_\eps(\pi_{(\eps)}(\x,\y))-E_\eps(v(\eps, \x, \nabla g(\y)))\right] &=\underbrace{\varepsilon^{-\frac{2}{2+d}}\left[ E_\eps(\pi_{(\eps)}(\x,\y))- \mathcal{W}^2_2(\rho_0,\rho_1)\right]}_{\mathcal{I}}\\[10pt]
&+ \underbrace{\varepsilon^{-\frac{2}{2+d}}\left[ \mathcal{W}^2_2(\rho_0,\rho_1)-E_\eps(\pi_{\eps}(\x,\y))\right]}_{\mathcal{II}}\\[10pt]
&+\underbrace{\varepsilon^{-\frac{2}{2+d}}\left[ E_\eps(\pi_{\eps}(\x,\y))-E_\eps(v(\eps, \x, \nabla g(\y)))\right]}_{\mathcal{III}}.
\end{aligned}
\]
Then thanks to Theorem~\ref{Theorem:Main} and the fact that $\pi_\varepsilon$ approximates $\pi_{(\varepsilon)}$ we have $\mathcal{I}\to -\mathcal{II}$, and then from~\eqref{eq:pi_v}, applying the same ideas as in the proof of Theorem~\ref{Theorem:Main}, we obtain that $\mathcal{III}\to 0$.

Apart from obtaining the limit conjectured by \cite{zhang2023manifold}, the main interest of this article consists on providing a constructive approach to the study of such limits for more general penalties. Our technique should work with very little changes for penalties of the form $  \left\| \cdot \right\|_{L^m( \rho_0 \otimes \rho_1)}^m$ for $m>1$, which corresponds to the full range of parameters of the Porous Medium Equation. We leave this generalization for further work and we highlight that we do not know if this technique could work for non-convex penalties of the form $  \left\| \cdot\right\|_{L^m( \rho_0 \otimes \rho_1)}^m$ for $0<m<1$. This approach differs greatly from the one taken by \cite{pal2019difference}, which relies deeply on the linearity of the heat equation.   In addition, our analysis further  elucidates why the correct rate of convergence in Theorem~\ref{Theorem:Main} is $\eps^{-\frac{2}{d+2}}$.

To prove Theorem \ref{Theorem:Main}, we need to provide both an upper bound and a lower bound for $\mathcal{T}_{2, \varepsilon, (\cdot)^2}(\rho_0, \rho_1)$. To provide the lower bound, we will use the dual formulation (see \cite{nutz2024quadratically})
\begin{multline}
    \label{dualQuadr1}
    \mathcal{T}_{2, \varepsilon, (\cdot)^2}(\rho_0, \rho_1) ={2\cdot} \sup_{a,b\in L^2(\rho_0)\times L^2(\rho_1)} \int \bigg\{a(\x) + b(\y) \\
    -\frac1{{2\eps}}\left(a(\x)+b(\y)-\frac{1}{{2}}\|\x-\y\|^2\right)_+^2\bigg\} \diff\rho_0(\x) \diff \rho_1(\y)
\end{multline}
of \eqref{primalQuad}, whose solutions  are the so-called regularized potentials. 
The solutions of the primal and dual problems are related in the following way;  $(a_\eps, b_\eps)$ is a maximizer of \eqref{dualQuadr1} if and only if 
\begin{equation}
\label{relationPrimalDual}
   \frac{1}{\eps}\left(a_\eps(\x)+b_\eps(\y)-\frac{1}{{2}}\|\x-\y\|^2  \right)_+\diff\rho_0(\x) \diff \rho_1(\y)
\end{equation}
is a minimizer of \eqref{primalQuad}. The reader should note the resemblance between~\eqref{relationPrimalDual} and~\eqref{eq:barenblatt_profile} in the case $m=2$, with $a_\varepsilon(\x), b_\varepsilon(\y)\sim C\varepsilon^{\frac{2}{d+2}}$ and $t=\varepsilon$. This relation suggests as a candidate  a pair of functions  \( (\tilde{f}_\eps, g) \) such that 
\begin{align*}
    &\int \left(\tilde{f}_\eps(\x)+\frac{1}{2}\|\y\|^2 -g(\y)-\frac{1}{{2}}\| \x-\y\|^2 \right)_+ d\rho_1(\y)= \eps \quad \quad \text{for all } \x \in \Omega_0.
\end{align*}
After some calculations, 
Lemma~\ref{LemmaDiver} yields 
$\tilde{f}_\eps(\x)\asymp  \frac{1}{2}\|\x\|^2 -g^*(\x)+ C_\eps(\x),  $
where $$ C_\eps(\x):=\frac{\eps^{\frac{2}{d+2}}}{C_d^{\frac{2}{d+2}} \big(\rho_0(\x)\rho_1[\nabla g^*(\x)]\big)^{\frac{1}{d+2}}}\quad {\rm for}\quad  C_d:= 2^{\frac{d+2}{2}} \mathcal{H}^{d-1}(\mathcal{S}^{d-1}) \frac1{d(d+2)}. $$
Hence, we will find the lower bound by finding the limit as $\eps\to 0$ of 
\begin{equation}
    \label{lowerMultby2}
    {2\cdot } {\eps^{-\frac{2}{d+2}}}\left({\Gamma_\eps\left(f_\eps, \frac{1}{2}\|\cdot\|^2 -g\right)- {\frac{1}{2}}\mathcal{W}_2^2(\rho_0, \rho_1)} \right)
\end{equation}
where 
    $ {f}_\eps(\x)=  \frac{1}{2}\|\x\|^2 -g^*(\x)+ {C_\eps(\x)}$ and 
    $$  \Gamma_\eps (a, b)=\int \bigg\{a(\x) + b(\y) 
    -\frac1{2\varepsilon}\left(a(\x)+b(\y)-\frac{1}{2}\|\x-\y\|^2\right)_+^2\bigg\} \diff\rho_0(\x) \diff \rho_1(\y). $$
For the upper bound, we will find a coupling $\pi_\eps\in \Pi(\rho_0, \rho_1)$ such that the functional 
$$ \mathbb{H}_\eps(\pi)=   \int \|\x-\y\|^2d\pi(\x,\y)+{\eps} \left\| \frac{d\pi}{  d(\rho_0 \otimes \rho_1) }\right\|_{L^2( \rho_0 \otimes \rho_1)}^2, $$
at $\pi_\eps$ achieves the same limit. The natural candidate is the measure $\tilde{\pi}$ with  \( \frac{\diff \tilde{\pi}}{\diff (\rho_0\otimes \rho_1)} (\x, \y) = \frac{1}{\eps}(C_\eps(\x)-D(\x, \y))_+ \), 
where 
\begin{equation}\label{divergence}
    D(\x, \y )= g^*(\x)+g(\y)-\langle \x, \y\rangle
\end{equation}
is the  well-known  Bregman divergence. 
However, \( \tilde{\pi} \) is neither a coupling nor a probability measure in the product space. 
Therefore, it must be adjusted using the following procedure: Firstly, transport \(\rho_0\) to \(\rho_1\) to work exclusively with the measure
\[ 
\frac{1}{\varepsilon}(C_\varepsilon(\mathbf{x}) - D(\mathbf{x}, \nabla g^*(\mathbf{x}')))_{+} \, \mathrm{d}\rho_0(\mathbf{x}) \, \mathrm{d}\rho_0(\mathbf{x}'). 
\]
Since $D(\mathbf{x}, \nabla g(\mathbf{x}'))$ is more difficult to control than its quadratic approximation (see Lemma~\ref{LemmaDiver})  $$ 
\frac{1}{2}\|\x-\x' \|^2_{\nabla^2 g(\x)}=\frac{1}{2} \langle \x-\x',  \nabla^2 g(\x)(\x-\x')\rangle, 
$$ 
we construct the symmetric version of its density
\[ 
m_\varepsilon (\mathbf{x}, \mathbf{x}') = \frac{1}{2 \varepsilon} \left( C_\varepsilon(\mathbf{x}) + C_\varepsilon(\mathbf{x}') - \frac{1}{2}\|\x-\x' \|^2_{\nabla^2 g(\x)}-\frac{1}{2}\|\x-\x' \|^2_{\nabla^2 g(\x')} \right)_{+}.
\]
Again, we highlight the resemblance between this last definition and~\eqref{eq:barenblatt_profile} in the case $m=2$, with $C_\varepsilon(\x), C_\varepsilon(\y)\sim C\varepsilon^{\frac{2}{d+2}}$, also $\|\x-\x' \|_{\nabla^2 g(\x)},\|\x-\x' \|_{\nabla^2 g(\x')}\sim \|\x-\x'\|$ (see Lemma~\ref{LemmaDiver} later on) and finally $t=\varepsilon$.

Subsequently, we normalize to create the measure with density 
\[ 
\xi_{\varepsilon} (\mathbf{x}, \mathbf{x}') = \frac{m_\varepsilon(\mathbf{x}, \mathbf{x}')} {\int m_\varepsilon(\mathbf{x}, \mathbf{x}') \, \mathrm{d}(\rho_0 \otimes \rho_0)(\mathbf{x}, \mathbf{x}')}
\] 
with respect to $\rho_0 \otimes \rho_0$.
Within the product space, the symmetry $\xi_{\varepsilon}$ is important because it implies   that both its marginals are the same and with  density \( \rho_\eps(\x')=\int {\xi}_{\eps} (\x, \x') \diff \rho_0(\x)\) with respect to $\rho_0$. Then, transport \(  \rho_\varepsilon \diff \rho_0 \) to \( \diff \rho_0 \) and define the probability measure 
\[ 
\mathrm{d} \mu_\varepsilon (\mathbf{x}, \mathbf{y}) = \frac{\xi_{\varepsilon}(\nabla \phi_\varepsilon^*(\mathbf{x}), \nabla \phi_\varepsilon^*(\mathbf{x}'))} {\rho_\varepsilon(\nabla \phi_\varepsilon^*(\mathbf{x})) \rho_\varepsilon(\nabla \phi_\varepsilon^*(\mathbf{x}'))} \mathrm{d}\rho_0(\mathbf{x}) \, \mathrm{d}\rho_0(\mathbf{x}'), 
\]
where \(\nabla \phi_\varepsilon\) is the optimal transport map between  \(\diff \rho_0\) and \(\rho_\varepsilon \diff \rho_0\). For the rates of this coupling to be appropriate, it is necessary that the Lipschitz constant of \(\nabla \phi_\varepsilon\) tends to 1 uniformly (see \eqref{stabilty}). Using Caffarelli's regularity theory (see \cite{Caffarelli1992,Caffarelly1990}), one can obtain the norm \(\| \phi_\varepsilon \|_{\mathcal{C}^{2, \alpha}(\Omega_0^{\delta})}\), where 
\[
\Omega_0^{\delta} = \{ \mathbf{x} \in \Omega_0 : \mathrm{dist}(\mathbf{x}, \partial \Omega_0) \geq \delta \} 
\] 
and \(\delta > 0\), uniformly bounded in \(\varepsilon\). In one dimension or in the flat torus, this holds for \(\delta = 0\), and the proof concludes at this step. In higher dimensions and with Euclidean cost, it is not entirely clear that this bound has been established in the literature---regularity up to the boundary is known for strongly convex and smooth domains  \citep{Urbas1997,Caffarelli1996Bound2}, but the dependence on the constants is not clear to us. The same issue appears in \cite{manole2021plugin} in a different context. Instead of attempting to prove this result, we will circumvent the issue by dividing the space $\Omega_0$ into two parts: points that are less than $\delta$ away from the boundary and those that are farther. For the latter, we can apply interior regularity, where the dependence on the constants is explicit. For points near the boundary, we will use a different coupling---the so-called {\it ``frame coupling''}---that does not achieve the appropriate limit but does not cause the functional to diverge. After taking the limit as  $\eps$ tends to 0, we will then take the limit as $\delta$ tends to 0 and check that it matches the lower bound.

As Caffarelli's results are only valid for convex domains, the previous discussion is applicable only to convex domains. 
The final step of the proof is to remove that assumption. It involves creating a tessellation of the domain \(\Omega_0\) into disjoint squares \(\{I_i\}_{i=1}^m\) and constructing a frame coupling in \(\Omega_0 \setminus \{I_i\}_{i=1}^m\), applying the previous coupling to each of the squares. This piecewise-defined coupling is referred to as the {\it ``stained glass"} coupling (see Figure~\ref{fig:images}).

The rest of the paper is organized as follows. In Section \ref{section:notation}, we introduce the notation that will be utilized throughout the paper.  In Section~\ref{section:OT}, we present the necessary results from optimal transport theory that are essential for proving the main theorem. In Section \ref{Section:lower}, we will provide the lower bound, and in Section \ref{section:upper}, we will establish the upper bound. As mentioned earlier, the upper bound will be proven first for convex supports (Section~\ref{section:convex}) and then for arbitrary supports (Section~\ref{section:General}). 

\section{Notation and auxiliary results}\label{section:notation}
In order to present the foundational concepts more seamlessly, we begin by defining a finite positive constant $C(a_1, \dots, a_d)$ that depends solely on a set of parameters $(a_1, \dots, a_d)$. For the sake of brevity, we denote a constant that depends only on $\rho_0$, $\rho_1$, $\Omega_0$, $
\Omega_1$ and $g$ by $C$, specifically $C = C(\rho_0, \rho_1, g)$. A function $v$ of $\varepsilon$ is said to be a little-o of another function $u$ (denoted as $v = o(u)$) if $\frac{v(\varepsilon)}{u(\varepsilon)} \to 0$ as $\varepsilon \to 0$. Occasionally, the function $v(\varepsilon, \mathbf{x})$ may depend on $\mathbf{x}$ within $\Omega_0$. In such cases, we use the notation $v = o(u)$ if $\sup_{\mathbf{x} \in \Omega_0} \frac{v(\varepsilon, \mathbf{x})}{u(\varepsilon)} \to 0$ as $\varepsilon$ approaches zero. In the same manner, we will use the standard big-O notation.
 The notation $x \asymp_{\kappa} y$ is used to express an equivalence of order of convergence, considering multiplicative constants that depend solely on a parameter $\kappa > 0$, particularly in contexts approaching the limit as $\varepsilon \to 0$. As before $\kappa=(\rho_0, \rho_1, \Omega_0,
\Omega_1, g)$ we just write $x \asymp  y$.

Moreover, within a compact set $\Omega$, the space $\mathcal{C}^{p, \alpha}(\Omega)$ is the space of H\"older continuous functions of order $p$ with H\"older continuity parameter $\alpha$ in the open interval $(0,1)$. The norm for this function space is defined as 
\[
\|f\|_{\mathcal{C}^{p, \alpha}(\Omega)} = \sum_{k=0}^{p}\sum_{|\alpha|=k} \sup_{x \in \Omega} \left| \frac{d^k}{dx_{a_1} \cdots dx_{a_k}} f(x) \right| + \sum_{|\alpha|=p} \sup_{\substack{x, y \in \Omega \\ x \neq y}} \frac{|f(x) - f(y)|}{\|x - y\|^\alpha}.
\]

Additionally, the notation $\ell_d$ is employed to denote the Lebesgue measure, and $g^*$ refers to the convex conjugate of $g$, calculated as $g^*(x) = \sup_{y \in \mathbb{R}^d} (\langle x, y \rangle - g(y))$. The Hausdorff measure of dimension $d-1$ is represented by $\mathcal{H}^{d-1}$.

Furthermore, for a positive definite matrix $A$, its square root, denoted $A^{1/2}$, is the unique positive definite matrix $V$ such that $V^2 = A$.  The product measure $\rho_0 \otimes \rho_1$ and the space $L^2(\mu)$, which includes functions $f$ with a finite $L^2(\mu)$-norm given by
\[
\|f\|_{L^2(\mu)} = \left( \int f^2(x)  \diff \mu(x) \right)^{1/2},
\]
are also crucial. In instances where Borel measures $\mu$ and $\nu$ satisfy $\mu \ll \nu$, the notation $\frac{d\mu}{d\nu}$ represents the Radon-Nikodym derivative of $\mu$ with respect to $\nu$.  For the particular cases of $\rho_i$, for $i=1,2$, to avoid complicating the notation, we will use the following abuses of notation: $\frac{d\rho_i}{d\ell_d}= \rho_i $ and 
$$ \int f(\x) d\rho_i(\x) = \int_{\Omega_i} f(\x) \rho_i(\x)  \diff \ell_d(\x), \quad i=1, 2.  $$
For a Borel set $A$,   $\chi_A$ denotes the indicator function of $A$. Sometimes, when it can cause confusion, we write $\chi_{[A]}$. We say that a function $T$ pushes a measure $\mu$ forward to another measure $\nu$ if $\nu(A)=\mu(T^{-1}(A))$ for all Borel set $A$. In such a case, we write $T_\# \mu=\nu$.  A set $A$ is compactly contained in $B$ ($A\subset \subset B$)  if there exists a compact set $K$ and an open set $\mathcal{U}$ such that $A\subset K\subset \mathcal{U}\subset  B$. The Euclidean Ball with center \(\x\) and radius \(\delta > 0\) is denoted as \(\mathbb{B}(\x, \delta)\).


\subsection{Optimal transport background}\label{section:OT}
The optimal transport problem  \eqref{eq: OT} admits the dual formulation
\begin{equation}
    \label{eq:OTdual}
    \mathcal{W}_2^2(\rho_0, \rho_1) 
   ={2\cdot}\sup_{(a,b)\in \Phi} \int a(\x) \diff\rho_0(\x) + \int b(\y) \diff\rho_1(\y),
\end{equation}
where 
$ \Phi=\{ (a, b)\in (\mathcal{C}(\R^d))^2: \ a(\x)+b(\y)\leq \|\x-\y\|^2/2, \ \forall \x, \y \in \R^d\} .$
Under Assumption~\ref{Assumptions}, the solution of the primal problem \eqref{eq: OT} is given by $({\rm Id}\times \nabla g)_\# \rho_1$ and the solution of \eqref{eq:OTdual} by the pair 
\begin{equation}
    \label{dualrelationOT}
    \left( \frac{1}{2}\|\cdot\|^2 -g^*,  \frac{1}{2}\|\cdot\|^2 -g \right).
\end{equation}
In general, the function $g$ solves the Monge-Ampère equation
$$ \det(\nabla^2 g)=\frac{\rho_1}{\rho_0(\nabla g) } \quad {\rm in} \ \Omega_1, \quad \nabla g (\Omega_1)= \Omega_0  $$
in the so-called Brenier sense (see \cite{Fi}). In our case, by assumption, $g$ solves it in the strong sense. The Monge-Ampère equation is a degenerate elliptical nonlinear differential equation. The regularity of convex solutions has been studied by Caffarelli in \cite{Caffarelli1992,Caffarelly1990}. Here, we highlight the result that we will use in this work.
\begin{Theorem}[Caffarelli]\label{Theorem:Caffarelli}
   Let $\Omega$ be open and convex $q_1,q_2\in \mathcal{C}^{0,\alpha}(\Omega)$ be positive functions such that 
    $$ \int_{\Omega} q_1\, d\ell_d= \int_\Omega q_2\,  d\ell_d=1. $$
    Then there  exists a
    unique (up to additive constant) convex solution $u$ of the boundary value problem 
    $$ \det(\nabla^2 u  )=\frac{q_1}{q_2(\nabla u )}\quad {\rm in} \ \Omega, \quad \nabla u (\Omega)= \Omega $$
    in $\mathcal{C}^{2,\alpha}_{Loc}(\Omega)$.
    Moreover, for each $K\subset \subset \Omega$ 
    $$ \|u\|_{\mathcal{C}^{2, \alpha}(K)} \leq  C\left(K, \inf_{\x\in \Omega} q_1, \inf_{\x\in \Omega} q_2, \|q_1\|_{\mathcal{C}^{0,\alpha}(K)}, \|q_2\|_{\mathcal{C}^{0,\alpha}(K)} \right) $$
\end{Theorem}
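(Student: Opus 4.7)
The plan is to assemble the statement from three classical ingredients: Brenier's polar factorization for existence and uniqueness, Caffarelli's interior strict convexity result, and the Schauder-type interior estimates for the Monge--Ampère equation that Caffarelli derived in the cited papers.

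First, I would invoke Brenier's theorem (\cite{brenier1991polar}) applied to the two probability measures $q_1\,d\ell_d$ and $q_2\,d\ell_d$ supported on $\Omega$. This provides a convex function $u$, unique up to an additive constant in the class of convex functions whose subgradient is supported in $\bar\Omega$, such that $(\nabla u)_\#(q_1\ell_d) = q_2 \ell_d$ and $\nabla u(\Omega)\subseteq \Omega$ (the reverse inclusion holding a.e.\ by the analogous argument applied to the convex conjugate). Because $\Omega$ is convex, this gives a Brenier/Alexandrov solution of the Monge--Ampère equation in the measure sense; by a standard change of variables this reads $\det(\nabla^2 u) = q_1/q_2(\nabla u)$ at every point where $u$ is twice differentiable in the Alexandrov sense. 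Uniqueness up to an additive constant is inherited directly from Brenier's theorem.

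Second, I would upgrade this weak solution to a classical one using Caffarelli's regularity program. On any compact $K\subset\subset \Omega$, the ratio $q_1/q_2$ is bounded above and below away from $0$ and $\infty$: the lower bounds on $q_1,q_2$ are hypotheses, and continuity on $\bar\Omega$ together with $\|q_i\|_{\mathcal{C}^{0,\alpha}(K)}<\infty$ yields the upper bounds. Caffarelli's interior strict convexity result (\cite{Caffarelly1990}) then implies that $u$ is strictly convex on the interior, and the $C^{1,\alpha}$ regularity theorem (\cite{Caffarelli1992}) gives $u\in \mathcal{C}^{1,\alpha'}_{Loc}(\Omega)$ for some $\alpha'\in(0,1)$ depending only on the dimension and the two-sided density bounds. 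In particular $\nabla u$ is locally Hölder continuous, so the right-hand side $x\mapsto q_1(x)/q_2(\nabla u(x))$ is locally $\mathcal{C}^{0,\alpha''}$ with $\alpha''=\min(\alpha,\alpha')$ and Hölder norm controlled by $\|q_1\|_{\mathcal{C}^{0,\alpha}}$, $\|q_2\|_{\mathcal{C}^{0,\alpha}}$, and $\inf q_2$.

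Third, I would bootstrap. With strict convexity and a $\mathcal{C}^{0,\alpha''}$ right-hand side bounded away from zero, Caffarelli's interior $\mathcal{C}^{2,\alpha}$ Schauder estimate for Monge--Ampère (obtained by linearizing around the strictly convex potential, which yields a uniformly elliptic equation on each compact subset with ellipticity constants controlled by the density bounds) delivers $u\in \mathcal{C}^{2,\alpha}_{Loc}(\Omega)$ together with the stated quantitative bound
\[
\|u\|_{\mathcal{C}^{2,\alpha}(K)} \le C\bigl(K, \inf_\Omega q_1,\inf_\Omega q_2,\|q_1\|_{\mathcal{C}^{0,\alpha}(K)},\|q_2\|_{\mathcal{C}^{0,\alpha}(K)}\bigr).
\]
The explicit dependence of the constant is obtained by tracking how each of the three ingredients above depends on these parameters and on the distance $\mathrm{dist}(K,\partial\Omega)$, which enters through Caffarelli's localization lemma when producing normalized sections around interior points.

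I expect the main obstacle to be the interior strict convexity step: everything else is a routine packaging of elliptic theory, but the fact that a Brenier potential with merely bounded densities is strictly convex on the interior is a deep result relying on Caffarelli's affine-invariant sections, the Alexandrov maximum principle, and a delicate exclusion argument for supporting hyperplanes that touch $u$ on a set of positive dimension. The careful tracking of constants in the final Schauder bound is technically tedious but conceptually straightforward once strict convexity and $\mathcal{C}^{1,\alpha}$ regularity have been quantified in terms of the density bounds.
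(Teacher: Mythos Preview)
The paper does not give its own proof of this theorem: it is stated as a background result and attributed directly to \cite{Caffarelli1992,Caffarelly1990}, with no argument beyond the citation. Your sketch is therefore not competing with anything in the paper; it is a reasonable outline of the standard route (Brenier existence/uniqueness, then Caffarelli's strict convexity on convex targets, then $\mathcal{C}^{1,\alpha}$ regularity, then the Schauder bootstrap to $\mathcal{C}^{2,\alpha}$), and it correctly identifies the strict-convexity step as the genuinely deep ingredient.
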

The assumption of regularity on $g$ is key in the infinitesimal developments we are undertaking. Lemma~\ref{LemmaDiver} is fundamental as it relates $D(\x, \y)$ with its linearized inner product 
 $$ 
 \langle \x, \z \rangle_{\nabla^2 g^*(\x)}= \langle \x, \nabla^2 g^*(\x) \z \rangle.
 $$ 
 We omit its proof as it is a mere second-order Taylor expansion of the Bregman divergence 
$$D(\x, \y)= g^*(\x)+g(\y)-\langle \x, \y\rangle $$
of $g$.

\begin{Lemma}\label{LemmaDiver}
    Let Assumption~\ref{Assumptions} hold. Then there exists $\sigma_M(g)\geq \sigma_m(g)>0$ such that \begin{align}\label{quadraticBoundsDiv}
      \frac{\sigma_m(g)}{2} \| \x-\x'\|^2&\leq  D(\x, \nabla g^*(\x'))\leq  \frac{\sigma_M(g)}{2} \| \x-\x'\|^2.
    \end{align} 
Moreover,  there exists $C>0$ such that 
\begin{equation}
 \left\vert    D(\x, \nabla g^*(\x'))-\frac{1}{2}\|\x-\x' \|^2_{\nabla^2 g^*(\x)}\right\vert \leq  C\|\x-\x'\|^{2+\alpha}, \label{DivXb}
\end{equation}   
where 
$$ \frac{1}{2}\|\x-\x' \|^2_{\nabla^2 g^*(\x)}=\frac{1}{2} \langle \x-\x',  \nabla^2 g^*(\x)(\x-\x')\rangle$$
and $\alpha$ is as in Assumption~\ref{Assumptions}. 
\end{Lemma}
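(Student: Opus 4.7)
The plan is to reduce $D(\x, \nabla g^*(\x'))$ to a standard Bregman divergence of $g^*$ and then expand it to second order. Using the Fenchel equality $g(\nabla g^*(\x')) = \langle \x', \nabla g^*(\x')\rangle - g^*(\x')$, one has
\[
D(\x, \nabla g^*(\x')) = g^*(\x) - g^*(\x') - \langle \x - \x',\, \nabla g^*(\x')\rangle,
\]
and since $g^* \in \mathcal{C}^{2}(\Omega_0)$ and the cone condition of Assumption~\ref{Assumptions}.1 places the segment $[\x',\x]$ inside $\Omega_0$ once $\|\x-\x'\|$ is small, Taylor's theorem with integral remainder around $\x'$ gives
\[
D(\x, \nabla g^*(\x')) = \int_0^1 (1-t)\,\langle \x-\x',\, \nabla^2 g^*(\x' + t(\x-\x'))(\x-\x')\rangle \, \diff t.
\]

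For \eqref{quadraticBoundsDiv}, I would combine the identity $\nabla^2 g^*(\nabla g(\y)) = [\nabla^2 g(\y)]^{-1}$ with Assumption~\ref{Assumptions}.4 to obtain the eigenvalue sandwich $\sigma_M(g)^{-1}\,\mathrm{Id} \leq \nabla^2 g^* \leq \sigma_m(g)^{-1}\,\mathrm{Id}$ on $\Omega_0$. Plugging this into the integral above and using $\int_0^1 (1-t)\,\diff t = \tfrac{1}{2}$ yields \eqref{quadraticBoundsDiv}, with the constants being the reciprocals of those in the assumption (the mild abuse of notation signalled by the reuse of the names $\sigma_m(g), \sigma_M(g)$).

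For the refined estimate \eqref{DivXb}, I would write $\tfrac{1}{2}\|\x-\x'\|^2_{\nabla^2 g^*(\x)} = \int_0^1 (1-t)\langle \x-\x', \nabla^2 g^*(\x)(\x-\x')\rangle\,\diff t$, subtract it from the integral representation of $D$, and obtain
\[
D(\x, \nabla g^*(\x')) - \tfrac{1}{2}\|\x-\x'\|^2_{\nabla^2 g^*(\x)} = \int_0^1 (1-t)\, \langle \x-\x',\,[\nabla^2 g^*(\x'+t(\x-\x')) - \nabla^2 g^*(\x)](\x-\x')\rangle\,\diff t.
\]
Since $\|\x'+t(\x-\x')-\x\| = (1-t)\|\x-\x'\|$, it suffices to establish the Hölder estimate $\|\nabla^2 g^*(\u)-\nabla^2 g^*(\v)\| \leq C\|\u-\v\|^\alpha$, which is the only genuine obstacle in the proof. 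I would derive it by writing $\nabla^2 g^*(\x) = [\nabla^2 g(\nabla g^*(\x))]^{-1}$ and composing three ingredients: $\nabla^2 g \in \mathcal{C}^{0,\alpha}(\Omega_1)$ from Assumption~\ref{Assumptions}.4, the fact that $\nabla g^*$ is Lipschitz on $\Omega_0$ (its Jacobian is the bounded matrix $\nabla^2 g^*$ from the previous paragraph), and the smoothness of matrix inversion on the set of matrices with eigenvalues bounded away from zero. Inserting this into the integrand bounds it by $C(1-t)^\alpha\|\x-\x'\|^{2+\alpha}$, and integrating in $t$ concludes \eqref{DivXb}.
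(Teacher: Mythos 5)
Your proof is correct and is precisely the argument the paper intends: the paper omits the proof of Lemma~\ref{LemmaDiver} altogether, describing it as ``a mere second-order Taylor expansion of the Bregman divergence'' of $g$, and that is exactly what you carry out (reduction to the Bregman divergence of $g^*$ via the Fenchel identity, the integral-remainder Taylor formula, the eigenvalue sandwich $\sigma_M(g)^{-1}\,\mathrm{Id}\leq \nabla^2 g^*\leq \sigma_m(g)^{-1}\,\mathrm{Id}$ obtained from $\nabla^2 g^*=[\nabla^2 g(\nabla g^*)]^{-1}$, and the H\"older continuity of $\nabla^2 g^*$). The one caveat --- which the paper glosses over as well --- is that the cone condition of Assumption~\ref{Assumptions} does not by itself force the segment $[\x',\x]$ to lie in $\Omega_0$ when the domain is not convex, so the Hessian bounds along that segment should be justified either by invoking the global convexity and regularity of the extension of $g^*$ to a convex neighbourhood, or by noting that the lemma is only ever applied at scales $\|\x-\x'\|\lesssim \eps^{1/(d+2)}$ where a suitable localization holds.
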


As a direct consequence of Lemma~\ref{LemmaDiver} we obtain the following.

 \begin{Lemma}\label{LemmaDevelopmentLeps}
     Let Assumption~\ref{Assumptions} hold. Then the following estimates hold: 
     \begin{enumerate}
         \item There exists a constant $C $ and $\eps_0>0$  such that  
    $$ \left\{ (\x, \x')\in \Omega_0\times \Omega_0: \    C_\eps(\x)\geq  D(\x, \nabla g^*(\x'))  \right\}\subset \left\{ (\x, \x')\in \R^{2d}: \ \|\x- \x'\|\leq C\eps^{\frac{1}{d+2}} \right\} $$
        holds for every 
         $\eps\leq \eps_0 $. 
         \item For $\eps\leq \eps_0$, 
     \begin{multline*}
        \sup_{\x, \x'\in \Omega_0}\bigg\vert  \left(  C_\eps(\x)- D(\x, \nabla g^*(\x'))\right)_+
        - \left(  C_\eps(\x) -\frac{1}{2}\|\x-\x'\|_{\nabla^2 g^*(\x)}^2\right)_+\bigg\vert
        =o(\eps^{\frac{2}{d+2}}).
     \end{multline*}
     \end{enumerate}  
 \end{Lemma}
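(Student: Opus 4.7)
The plan is to obtain both items as essentially immediate corollaries of Lemma~\ref{LemmaDiver}, combined with the explicit form of $C_\eps$. The key preliminary observation is that Assumption~\ref{Assumptions}, 3.\ gives the uniform upper bound $C_\eps(\x)\leq K\eps^{\frac{2}{d+2}}$ on $\Omega_0$, with $K$ depending only on $\lambda$, $d$, and the constant $C_d$ appearing in the definition of $C_\eps$.

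For item~(1), I would take any $(\x,\x')\in\Omega_0\times\Omega_0$ satisfying $C_\eps(\x)\geq D(\x,\nabla g^*(\x'))$ and combine the lower bound in~\eqref{quadraticBoundsDiv} with the above upper bound on $C_\eps$ to obtain $\tfrac{\sigma_m(g)}{2}\|\x-\x'\|^2\leq K\eps^{\frac{2}{d+2}}$, which yields the claimed inclusion with $C=\sqrt{2K/\sigma_m(g)}$ for all $\eps\leq\eps_0$.

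For item~(2), I would start from the elementary inequality $|a_+-b_+|\leq |a-b|$ applied with $a=C_\eps(\x)-D(\x,\nabla g^*(\x'))$ and $b=C_\eps(\x)-\tfrac12\|\x-\x'\|^2_{\nabla^2 g^*(\x)}$, so that the quantity inside the supremum is pointwise bounded by the left-hand side of~\eqref{DivXb} whenever $a_+$ or $b_+$ is strictly positive, and equals $0$ otherwise. On the support of $a_+$, item~(1) localizes $(\x,\x')$ to $\|\x-\x'\|\leq C\eps^{\frac{1}{d+2}}$; on the support of $b_+$ the same localization follows from the uniform lower bound
$$\tfrac12\|\x-\x'\|^2_{\nabla^2 g^*(\x)}\;\geq\; \tfrac{1}{2\sigma_M(g)}\|\x-\x'\|^2,$$
which comes from $\nabla^2 g\leq \sigma_M(g)\,{\rm Id}$ in Assumption~\ref{Assumptions}, 4.\ via the identity $\nabla^2 g^*(\x)=[\nabla^2 g(\nabla g^*(\x))]^{-1}$. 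On this common $\eps^{\frac{1}{d+2}}$-neighbourhood the Hölder estimate~\eqref{DivXb} gives
$$\left|D(\x,\nabla g^*(\x'))-\tfrac12\|\x-\x'\|^2_{\nabla^2 g^*(\x)}\right|\;\leq\; C\|\x-\x'\|^{2+\alpha}\;\leq\; C'\eps^{\frac{2+\alpha}{d+2}},$$
which is $o(\eps^{\frac{2}{d+2}})$ since $\alpha>0$, establishing the second item uniformly in $\x,\x'\in\Omega_0$.

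There is no substantive obstacle here; the lemma really is a direct corollary of Lemma~\ref{LemmaDiver}. The only small piece of care is in the second localization step for item~(2), where one must exploit the upper Hessian bound $\nabla^2 g\leq \sigma_M(g)\,{\rm Id}$ (equivalently, the lower eigenvalue bound on $\nabla^2 g^*$) in addition to the lower Hessian bound that drives item~(1); without this, the support of $b_+$ could a priori fail to lie in the $\eps^{\frac{1}{d+2}}$-scale on which the Hölder remainder~\eqref{DivXb} is small enough.
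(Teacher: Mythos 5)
Your proof is correct and follows exactly the route the paper intends: the authors omit the argument entirely, stating only that the lemma is ``a direct consequence of Lemma~\ref{LemmaDiver}'', and your write-up is the natural completion of that assertion (localization via the two-sided Hessian bounds plus the H\"older remainder \eqref{DivXb} and the $1$-Lipschitz property of $(\cdot)_+$). The extra care you take in localizing on the support of $b_+$ as well as $a_+$ is exactly the point one must not skip.
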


\subsection{Properties of the integrals}

The following results are well-known and easy to prove passing to spherical coordinates. We state them for further reference.

\begin{Proposition}\label{Lemma:elementary}
For any $a\geq0$,    it holds that 
    $$
    \int \left( a- \frac{\|\u\|^2}{2}\right)_+ \diff\ell_d(\u)= |a|^{\frac{d+2}{2}} C_d^{(1)}  ,$$
    where 
    $$
    C_d^{(1)} := \frac{2^{\frac{d+2}{2}}}{d(d+2)} \mathcal{H}^{d-1}(\mathcal{S}^{d-1}),
    $$
    and also
    $$
    \int \left(a- \frac{\|\u\|^2}{2}\right)_+^2 \diff\ell_d(\u)= |a|^{\frac{d+4}{2}} C_d^{(2)},
    $$
    where 
    $$
    C_d^{(2)}:= \frac{2^{\frac{d+6}{2}}}{d(d+2)(d+4)} \mathcal{H}^{d-1}(\mathcal{S}^{d-1}).
    $$
\end{Proposition}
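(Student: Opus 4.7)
The plan is to reduce each identity to a one-dimensional radial integral by passing to polar coordinates. First, I would observe that the integrand $(a - \|\u\|^2/2)_+$ vanishes outside the ball $\mathbb{B}(\mathbf{0}, \sqrt{2a})$, so each integral reduces to one over that ball. Writing $\diff\ell_d(\u) = r^{d-1}\,\diff r\,\diff\sigma(\omega)$ with $\omega\in\mathcal{S}^{d-1}$, the angular part factors out as $\mathcal{H}^{d-1}(\mathcal{S}^{d-1})$ in both cases, since the integrand is radial.

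For the first identity, the radial integral is evaluated by direct antidifferentiation:
$$\int_0^{\sqrt{2a}}\left(a-\tfrac{r^2}{2}\right) r^{d-1}\,\diff r = \frac{a\,(2a)^{d/2}}{d} - \frac{(2a)^{(d+2)/2}}{2(d+2)}.$$
Factoring $2^{d/2}a^{(d+2)/2}$ from both terms leaves the combinatorial factor $1/d - 1/(d+2) = 2/(d(d+2))$, so after multiplication by $\mathcal{H}^{d-1}(\mathcal{S}^{d-1})$ one recovers precisely $C_d^{(1)}\,a^{(d+2)/2}$.

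For the second identity, the cleanest route is the substitution $s = r^2/(2a)$, which converts the radial integral into a Beta-function integral:
$$\int_0^{\sqrt{2a}}\left(a-\tfrac{r^2}{2}\right)^2 r^{d-1}\,\diff r = \frac{2^{d/2}\,a^{(d+4)/2}}{2}\int_0^1 s^{(d-2)/2}(1-s)^2\,\diff s = \frac{2^{d/2}\,a^{(d+4)/2}}{2}\, B\!\left(\tfrac{d}{2},3\right).$$
Using $B(d/2,3) = \Gamma(d/2)\Gamma(3)/\Gamma(d/2+3) = 16/(d(d+2)(d+4))$, the powers of two collect into $2^{(d+6)/2}$ and, after multiplying by $\mathcal{H}^{d-1}(\mathcal{S}^{d-1})$, yield exactly $C_d^{(2)}\,a^{(d+4)/2}$.

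There is essentially no obstacle here: both statements are textbook polar-coordinate computations. The only care required is bookkeeping of the powers of $2$ when collecting terms, and recognizing the Beta integral in the second identity in order to avoid an unnecessary polynomial expansion. Both formulae follow the same template and can be treated in parallel.
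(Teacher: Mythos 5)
Your proof is correct, and it follows exactly the route the paper indicates (the paper omits the proof, remarking only that both identities are "easy to prove passing to spherical coordinates"). Both radial computations and the bookkeeping of the powers of $2$ check out, so nothing further is needed.
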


\section{Lower  bound}\label{Section:lower}
The aim of this section is to show 
$$ \liminf_{\eps\to 0^+} \frac{\mathcal{T}_{2, \varepsilon, (\cdot)^2}(\rho_0, \rho_1) - \mathcal{W}_2^2(\rho_0, \rho_1)}{\eps^{\frac{2}{d+2}}}\geq {\frac{d^{\frac{d+4}{d+2}}(d+2)^{\frac{2}{d+2}}}{\left(\mathcal{H}^{d-1}(\mathcal{S}^{d-1}) \right)^{\frac{2}{d+2}}}}   \int \big(\rho_0(\x)\rho_1[\nabla g^*(\x)]\big)^{-\frac{1}{(d+2)}}\diff \rho_0(\x). $$
A sufficient condition for this to occur is given by the following result \textemdash see equation~\eqref{lowerMultby2} in the introduction, whose proof will be provided below. 
One of the key aspects of this proof is that we divide the set $\Omega_0$ into two, the points near its boundary or far away, namely
\begin{equation}\label{def:omega_delta}
 \Omega_0^{\delta} := \{ \x\in \Omega_0: \ {\rm dist  }(\x, \partial \Omega_0)> \delta \}\quad\text{ and }\quad\Omega_0^{\delta,c}:=\{ \x\in \Omega_0: \ {\rm dist  }(\x, \partial \Omega_0)\leq  \delta \}.
\end{equation}
These sets will also play a fundamental role in the proofs of Section~\ref{section:upper}, hence their importance. As the reader will see, being at a distance $\delta$ from the boundary will provide many useful uniform estimates that are in the core of our analysis. Then, making $\delta\to 0$ we will find that our results hold true.

\begin{Lemma}\label{Lemma:lowerbound}
    Let Assumption~\ref{Assumptions} hold.  Set $C_d:= 2^{\frac{d+2}{2}} \mathcal{H}^{d-1}(\mathcal{S}^{d-1}) \frac1{d(d+2)}$. Then 
    $$ \lim_{\eps\to 0^+} \frac{\Gamma_\eps(f_\eps, \frac{1}{2}\|\cdot\|^2 -g )- \frac{1}{2}\mathcal{W}_2^2(\rho_0, \rho_1)}{\eps^{\frac{2}{d+2}}}=\frac{d}{C_d^{\frac{2}{d+2}}}   \int  \big(\rho_0(\x)\rho_1[\nabla g^*(\x)]\big)^{-\frac{1}{(d+2)}}\diff \rho_0(\x),   $$
    where 
    $ {f}_\eps(\x)=  \frac{1}{2}\|\x\|^2 -g^*(\x)+ C_\eps(\x) $ for $ C_\eps(\x)={\eps^{\frac{2}{d+2}}}{C_d^{\frac{-2}{d+2}} \big(\rho_0(\x)\rho_1[\nabla g^*(\x)]\big)^{\frac{-1}{d+2}}} $.

\end{Lemma}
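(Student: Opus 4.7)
The plan is to evaluate $\Gamma_\eps(f_\eps,\tfrac12\|\cdot\|^2-g)$ directly and extract the leading $\eps^{2/(d+2)}$ term. Substituting the proposed pair $(f_\eps,\tfrac12\|\cdot\|^2-g)$ into the definition of $\Gamma_\eps$, the pieces $\tfrac12\|\cdot\|^2-g^*$ and $\tfrac12\|\cdot\|^2-g$ combine with $\tfrac12\|\x-\y\|^2$ to reproduce the Bregman divergence $D(\x,\y)$ from~\eqref{divergence}, while their integrals against $\rho_0$ and $\rho_1$ reconstruct $\tfrac12\mathcal{W}_2^2(\rho_0,\rho_1)$ via the optimal transport dual~\eqref{dualrelationOT}. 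One then has
\[
\Gamma_\eps\bigl(f_\eps,\tfrac12\|\cdot\|^2-g\bigr)-\tfrac12\mathcal{W}_2^2(\rho_0,\rho_1)=\int C_\eps(\x)\,\diff\rho_0(\x)-\frac{1}{2\eps}\int\bigl(C_\eps(\x)-D(\x,\y)\bigr)_+^2\,\diff\rho_0(\x)\diff\rho_1(\y).
\]
The first summand integrates immediately to $\eps^{2/(d+2)}C_d^{-2/(d+2)}\int(\rho_0(\x)\rho_1[\nabla g^*(\x)])^{-1/(d+2)}\diff\rho_0(\x)$, so the whole task reduces to the asymptotic evaluation of the quadratic summand.

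For that summand, the plan is to push the $\y$-integration forward through $\y=\nabla g^*(\x')$ (using $(\nabla g^*)_\#\rho_0=\rho_1$), turning the integral into one over $(\x,\x')\in\Omega_0\times\Omega_0$ against $\rho_0\otimes\rho_0$. Lemma~\ref{LemmaDevelopmentLeps}(1) confines the support in $\x'$ to $\|\x-\x'\|\le C\eps^{1/(d+2)}$, and Lemma~\ref{LemmaDevelopmentLeps}(2) permits replacing $D(\x,\nabla g^*(\x'))$ by its quadratic linearization $\tfrac12\|\x-\x'\|^2_{\nabla^2 g^*(\x)}$ with an error that, once integrated on this thin support, is $o(\eps^{(d+4)/(d+2)})$ and therefore negligible after division by $2\eps$. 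For $\x\in\Omega_0^\delta$ the relevant ball lies inside $\Omega_0$ for $\eps$ small enough; I then perform the affine change of variables $\u=[\nabla^2 g^*(\x)]^{1/2}(\x'-\x)$ and use H\"older continuity of $\rho_0$ (Assumption~\ref{Assumptions}.2) to substitute $\rho_0(\x')\rightsquigarrow\rho_0(\x)(1+o(1))$. The inner integral then becomes $\int(C_\eps(\x)-\tfrac12\|\u\|^2)_+^2\diff\u=C_\eps(\x)^{(d+4)/2}C_d^{(2)}$ by Proposition~\ref{Lemma:elementary}, the Jacobian factor $\det(\nabla^2 g^*(\x))^{-1/2}$ is pinned down by the Monge--Amp\`ere identity $\det\nabla^2 g^*(\x)=\rho_0(\x)/\rho_1(\nabla g^*(\x))$, and the surviving density after multiplying by $\rho_0(\x)$ becomes $\rho_0(\x)(\rho_0(\x)\rho_1[\nabla g^*(\x)])^{-1/(d+2)}$ up to $\eps$-free constants. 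Plugging in the explicit form of $C_\eps$ and simplifying with the ratio $C_d^{(2)}/C_d$ of Proposition~\ref{Lemma:elementary} isolates the leading $\eps^{2/(d+2)}$ order and, combined with the first summand, matches the stated limit on $\Omega_0^\delta$.

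The main obstacle is uniformity up to the boundary. The localized ball $\mathbb{B}(\x,C\eps^{1/(d+2)})$ may leave $\Omega_0$ once $\x\in\Omega_0^{\delta,c}$, so the change of variables above does not apply there. My plan is to fix $\delta>0$, run the interior argument on $\Omega_0^\delta$, and bound the boundary contribution by crude $L^\infty$ estimates: the uniform lower bound $\rho_0,\rho_1\ge\lambda$ of Assumption~\ref{Assumptions}.3 gives $C_\eps=O(\eps^{2/(d+2)})$ uniformly, the cone condition of Assumption~\ref{Assumptions}.1 keeps a fixed positive fraction of the support inside $\Omega_0$, and together with $\nabla^2 g\succeq\sigma_m(g)\,\id$ these yield a contribution $O(\eps^{2/(d+2)})\cdot\ell_d(\Omega_0^{\delta,c})$ on the boundary strip; taking $\eps\to0$ first and $\delta\to0$ afterwards discards it. The delicate technical point is to keep every interior approximation $(D\approx\tfrac12\|\cdot\|^2_{\nabla^2 g^*(\x)}$, $\rho_0(\x')\approx\rho_0(\x)$, $\det\nabla^2 g^*$ via Monge--Amp\`ere$)$ with errors that are genuinely $o(\eps^{2/(d+2)})$ uniformly in $\x\in\Omega_0^\delta$; this is where Assumption~\ref{Assumptions}.4 and the Caffarelli regularity of Theorem~\ref{Theorem:Caffarelli} are essential, and constitutes the real technical effort of the proof.
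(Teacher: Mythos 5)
Your proposal is correct and follows essentially the same route as the paper's proof: the same algebraic reduction to $\int C_\eps\,\diff\rho_0$ minus the quadratic remainder, the same pushforward $\y=\nabla g^*(\x')$, the same splitting into $\Omega_0^\delta$ and $\Omega_0^{\delta,c}$, the same linearization of $D$ via Lemma~\ref{LemmaDevelopmentLeps}, the change of variables $\u=[\nabla^2 g^*(\x)]^{1/2}(\x-\x')$ with Proposition~\ref{Lemma:elementary} and the Monge--Amp\`ere identity, a crude $O(1)\cdot\rho_0(\Omega_0^{\delta,c})$ bound on the boundary strip, and the order of limits $\eps\to0$ then $\delta\to0$. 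The only cosmetic difference is that the paper's boundary estimate needs no cone condition (it simply extends the inner integral over all of $\R^d$), but this does not affect correctness.
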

\begin{proof}
As 
 \begin{multline*}
  \Gamma_\eps(f_\eps, \frac{1}{2}\|\cdot\|^2 -g )=  \iint \bigg\{\frac{1}{2}\|\x\|^2 -g^*(\x)+ C_\eps(\x) + \frac{1}{2}\|\y\|^2 -g^*(\y)
   \\ -\frac1{2\eps}\left( C_\eps(\x) -g(\y)-g^*(\x)+\frac{1}{2}\|\x\|^2+\frac{1}{2}\|\y\|^2-\frac{1}{2}\|\x-\y\|^2\right)_+^2\bigg\} \diff\rho_0(\x) \diff \rho_1(\y)
\end{multline*}
and, by \eqref{dualrelationOT},
 $$ \mathcal{W}_2^2(\rho_0, \rho_1)=\int  \left(\frac{1}{2}\|\cdot\|^2-g^*\right)\diff \rho_0+\int  \left(\frac{1}{2}\|\cdot\|^2-g\right)\diff \rho_1,
$$
 the value of $$L=\lim_{\eps\to 0^+} \frac{\Gamma_\eps(f_\eps, \frac{1}{2}\|\cdot\|^2 -g )- \mathcal{W}_2^2(\rho_0, \rho_1)}{\eps^{\frac{2}{d+2}}}$$ is given by the limit (multiplied by $\eps^{-\frac{2}{d+2}}$) of
 
 \begin{align*}
     L_\eps&=\int C_\eps(\x) \diff \rho_0(\x) -\frac{1}{2}\underbrace{\frac{1}{ \eps} \iint \left(  C_\eps(\x)- D(\x, \nabla g^*(\x'))\right)_+^2 \diff \rho_0(\x)\diff \rho_0(\x')}_{L'_\eps}.
 \end{align*}
Since  
 $$ \eps^{-\frac{2}{d+2}}\int C_\eps(\x) \diff \rho_0(\x) = \int \frac{1}{C_d^{\frac{2}{d+2}} \big(\rho_0(\x)\rho_1[\nabla g^*(\x)]\big)^{\frac{1}{d+2}}} d\rho_0(\x), $$
 we only need to deal with the limit of $ \eps^{-\frac{2}{d+2}} L'_\eps$.
 
 Set now $\delta>0$ and define the sets $\Omega_0^\delta$ and $\Omega_0^{\delta,c}$ as in~\eqref{def:omega_delta}. Then, due to Lemma~\ref{LemmaDevelopmentLeps}, there exists $\eps_\delta$ such that for every $\eps\leq \eps_\delta$, the set $\Omega_0^{\delta, c}$ contains the set
$$ \mathbb{V}_\eps=\{ \x\in \Omega_0: \exists \, \x'\in \partial\Omega_0: \  C_\eps(\x)\geq  D(\x, \nabla g^*(\x')) \}. $$
 Therefore, we split 
$\eps^{-\frac{2}{d+2}} L'_\eps $ into two terms; 
\begin{equation}
    \label{eq:splitL1}
\mathbb{T}_1=\eps^{-\frac{d+4}{d+2}} \int_{\Omega_0^{\delta}} \int \left(  C_\eps(\x)- D(\x, \nabla g^*(\x'))\right)_+^2 \diff \rho_0(\x') \diff \rho_0(\x)
\end{equation}
and 
\begin{equation}
    \label{eq:splitL2}
   \mathbb{T}_2= \eps^{-\frac{d+4}{d+2}} 
 \int_{\Omega_0^{\delta,c}} \int \left(  C_\eps(\x)- D(\x, \nabla g^*(\x'))\right)_+^2 \diff \rho_0(\x')\diff \rho_0(\x),
\end{equation}
and estimate each one separately. 

{\it Estimate for \eqref{eq:splitL1}.}   Since  $\rho_0$ is uniformly continuous over $\Omega_0$ and the support of the function $\left(  C_\eps(\x)- D(\x, \nabla g^*(\x'))\right)_+$ concentrates uniformly on the diagonal $\x=\x'$ (see Lemma~\ref{LemmaDevelopmentLeps}), it holds that  
\begin{align*}
\mathbb{T}_1= \eps^{-\frac{d+4}{d+2}}\int_{\Omega_0^{\delta}} \int_{\Omega_0}  \left(  C_\eps(\x)- D(\x, \nabla g^*(\x'))\right)_+^2 \diff \ell_d(\x') (\rho_0(\x)+o\left(1 \right)) \diff \rho_0(\x').
\end{align*} 
Considering that $  \Omega_0^{\delta}$ and $\mathbb{V}_\eps$ are disjoint, it follows that the inner integral can be taken over $\R^d$, i.e., 
\begin{align*}
    \mathbb{T}_1= \eps^{-\frac{d+4}{d+2}}\int_{\Omega_0^{\delta}} \int_{\R^d}  \left(  C_\eps(\x)- D(\x, \nabla g^*(\x'))\right)_+^2 \diff \ell_d(\x') (\rho_0(\x)+o\left(1 \right)) \diff \rho_0(\x').
\end{align*} 
Lemma~\ref{LemmaDevelopmentLeps} implies 
\begin{align*}
    \mathbb{T}_1\asymp \eps^{-\frac{d+4}{d+2}}\int_{\Omega_0^{\delta}} \int_{\Omega_0}  \left(  C_\eps(\x)-\frac{1}{2}\langle \x-\x', \nabla^2 g^*(\x) (\x-\x') \rangle )\right)_+^2 \diff \ell_d(\x') {\rho}_0(\x) \diff \rho_0(\x).
\end{align*} 
We focus on the inner integral
$$I(\x)= {\rho}_0^2(\x) \int_{\Omega_0}  \left(  C_\eps(\x)-\frac{1}{2}\langle \x-\x', \nabla^2 g^*(\x) (\x-\x') \rangle )\right)_+^2 \diff \ell_d(\x') .$$ First we change variables $\u =  [\nabla^2 g^*(\x)]^{\frac{1}{2}} (\x-\x'),   $ to get 
$$
   I(\x)=  \frac{{\rho}_0^2(\x)}{\det(\nabla^2 g^*(\x))^{\frac{1}{2}} } \int \left(  C_\eps(\x)-\frac{1}{2}\|\u\|^2\right)_+^2 
    \diff \ell_d(\u).
$$
Using Proposition~\ref{Lemma:elementary}, we find
\begin{align*}
    I(\x)&=  \frac{\big(C_\eps(\x)\big)^{\frac{d+4}{2}} C_d^{(2)} {\rho}_0^2(\x)}{\det(\nabla^2 g^*(\x))^{\frac{1}{2}} } \\
    &=  \frac{ \eps^{\frac{d+4}{d+2}} C_d^{(2)} {\rho}_0^2(\x)}{C_d^{\frac{d+4}{d+2}} \big(\rho_0(\x)\rho_1[\nabla g^*(\x)]\big)^{\frac{d+4}{2(d+2)}} \det(\nabla^2 g^*(\x))^{\frac{1}{2}} }\\
    &= \frac{ \eps^{\frac{d+4}{d+2}} C_d^{(2)}  \big(\rho_0(\x) \rho_1[\nabla g^*(\x)]\big)^{\frac{1}{2}}}{C_d^{\frac{d+4}{d+2}} \big(\rho_0(\x)\rho_1[\nabla g^*(\x)]\big)^{\frac{d+4}{2(d+2)}}  }\rho_0(\x)\\
    &= \frac{ \eps^{\frac{d+4}{d+2}} C_d^{(2)}  }{C_d^{\frac{d+4}{d+2}} \big(\rho_0(\x)\rho_1[\nabla g^*(\x)]\big)^{\frac{1}{(d+2)}}  }\rho_0(\x), 
\end{align*}
    where 
    $ C_d^{(2)}:= \frac{2^{\frac{d+6}{2}}}{d(d+2)(d+4)} \mathcal{H}^{d-1}(\mathcal{S}^{d-1}).$
Hence, 
$$  \mathbb{T}_1 \asymp \frac{  C_d^{(2)}  }{C_d^{\frac{d+4}{d+2}}   }\int_{\Omega_0^{\delta}} \big(\rho_0(\x)\rho_1[\nabla g^*(\x)]\big)^{-\frac{1}{(d+2)}}\diff \rho_0(\x). $$
We simplify the constant $C_d^{(2)} C_d^{-\frac{d+4}{d+2}}$ as follows:
\begin{align*}
C_d^{(2)} C_d^{-1 -\frac{2}{d+2}} 
&= C_d^{ -\frac{2}{d+2}  } \frac{2^{\frac{d+6}{2}}}{d(d+2)(d+4)} \mathcal{H}^{d-1}(\mathcal{S}^{d-1}) 2^{-\frac{d+2}{2}} \big(\mathcal{H}^{d-1}(\mathcal{S}^{d-1})\big)^{-1} d(d+2) \\
&= \frac{4}{d+4}C_d^{ -\frac{2}{d+2}  }, 
\end{align*}
which leads to
\begin{equation}
    \label{T1} 
   \mathbb{T}_1 \asymp \frac{4}{d+4}C_d^{ -\frac{2}{d+2}  }\int_{\Omega_0^{\delta}} \big(\rho_0(\x)\rho_1[\nabla g^*(\x)]\big)^{-\frac{1}{(d+2)}}\diff \rho_0(\x). 
\end{equation}
{\it Estimate for \eqref{eq:splitL2}.} First note that under Assumption~\ref{Assumptions}, 
\begin{equation}
    \label{eq:boundCeps}
    C_\eps(\x)\leq \frac{\eps^{\frac{2}{d+2}}}{ C_d^{\frac{2}{d+2}}\lambda^{\frac{2}{d+2}}} .
\end{equation}
Hence, due to Lemma~\ref{LemmaDiver} and the fourth point of Assumption~\ref{Assumptions}, we have the bound
\begin{align*}
0\leq&  \mathbb{T}_2\leq \eps^{-\frac{d+4}{d+2}}\int_{\Omega_0^{\delta, c}} \int \left( \frac{\eps^{\frac{2}{d+2}}}{C_d^{\frac{2}{d+2}}\lambda^{\frac{2}{d+2}}} -c\|\x-\x'\|^2\right)_+^2 \diff \rho_0(\x) \diff \rho_0(\x')\\
&\leq  \Lambda\,  \eps^{-\frac{d+4}{d+2}}\int_{\Omega_0^{\delta, c}} \int \left( \frac{\eps^{\frac{2}{d+2}}}{C_d^{\frac{2}{d+2}}\lambda^{\frac{2}{d+2}}} -c\|\x-\x'\|^2\right)_+^2 \diff \ell_d(\x)  \diff \rho_0(\x')\\
&\leq  \frac{\Lambda}{\lambda^{\frac{2}{d+2}}}   \eps^{-\frac{d}{d+2}}\int_{\Omega_0^{\delta, c}}\ell_d\left(\left\{\u : c\|\u\|^2\leq \frac{\eps^{\frac{2}{d+2}}}{C_d^{\frac{2}{d+2}}\lambda^{\frac{2}{d+2}}}   \right\}\right)  \diff \rho_0(\x'),
\end{align*}
for some positive constant $c$. As a consequence, there exists a constant $c'$ such that 
$$ 0\leq  \mathbb{T}_2\leq c' \rho_0(\Omega_0^{\delta, c}). $$
We can now conclude the proof by noticing that
\begin{align*}
    \liminf_{\eps\to 0^+}\ &\frac{\Gamma_\eps(f_\eps, g)- \mathcal{W}_2^2(\rho_0, \rho_1)}{\eps^{\frac{2}{d+2}}} \\
    &\geq \frac{d}{C_d^{\frac{2}{d+2}}} \int_{\Omega_0^{\delta}} \big(\rho_0(\x)\rho_1[\nabla g^*(\x)]\big)^{-\frac{1}{(d+2)}}\diff \rho_0(\x) -C\rho_0(\Omega_0^{\delta, c}) 
\end{align*}
and 
\begin{align*}
    \limsup_{\eps\to 0^+} &\frac{\Gamma_\eps(f_\eps, g)- \mathcal{W}_2^2(\rho_0, \rho_1)}{\eps^{\frac{2}{d+2}}} \\
    &\leq \frac{d}{C_d^{\frac{2}{d+2}}}   \int_{\Omega_0^{\delta}} \big(\rho_0(\x)\rho_1[\nabla g^*(\x)]\big)^{-\frac{1}{(d+2)}}\diff \rho_0(\x) +C\rho_0(\Omega_0^{\delta, c}) 
\end{align*}
hold for any $\delta>0$. Letting $\delta\to 0$ we obtain the result.
\end{proof}

\section{Upper bound}\label{section:upper}

In this section, we establish an upper bound that is asymptotically equivalent to the lower bound.
\begin{figure}[h!]
  \centering
    \includegraphics[width=6 cm,height=4cm, trim={10cm 4cm 10cm 4cm},clip ]{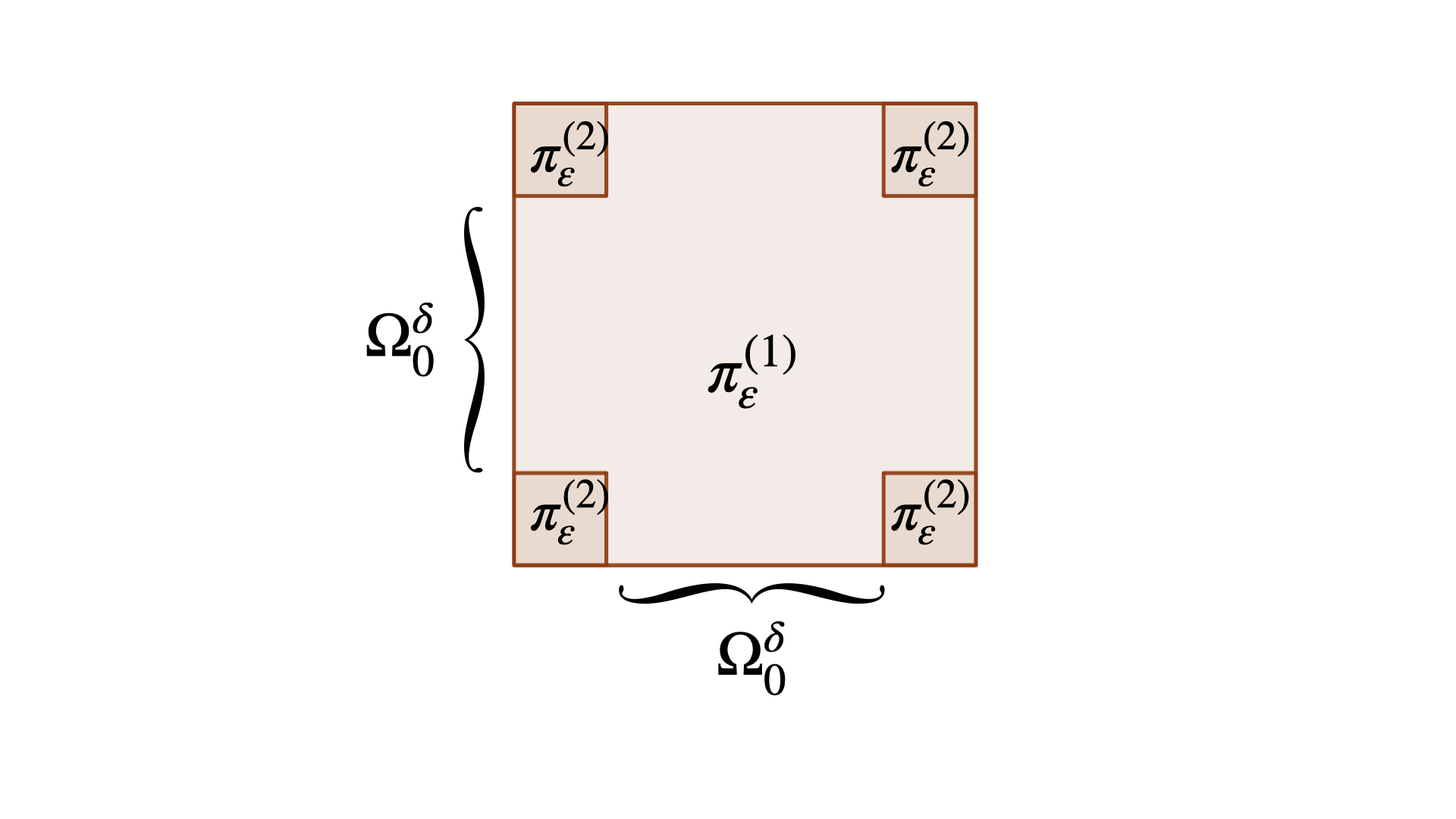} 
    \includegraphics[width=6 cm,height=4cm, trim={10cm 4cm 10cm 4cm},clip ]{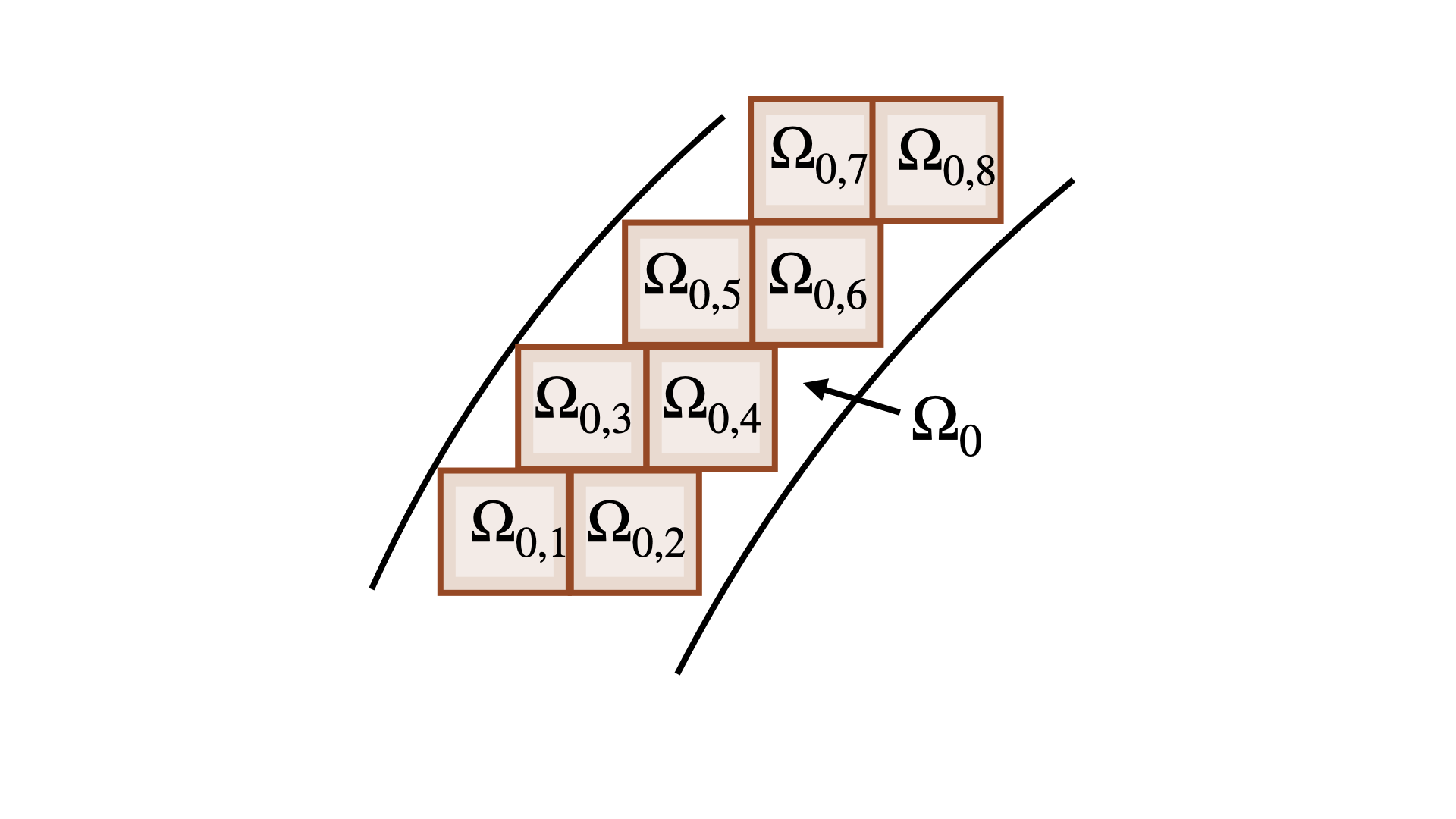}
  \caption{Proof Strategy. First, we prove the result assuming that \( \Omega_0 \) is convex (left image). We will define two different couplings, $\pi^{(1)}_\eps$ in the points far from the boundary of $\Omega_0$ and $\pi^{(2)}_\eps$ for the points close to said boundary, creating some sort of a frame for \( \Omega_0^\delta \). We will see that with \(\delta\) (the thickness of the frame) fixed, one can obtain limits as \(\varepsilon \to 0\) that differ from the lower bounds by a function of \(\delta\) which tends to 0 as \(\delta \to 0\). When the domain is not convex (right image), we will fix a tessellation of squares in a set contained within \( \Omega_0 \), where we will repeat the previous procedure in each square and create a new coupling in the part not covered by the union of the squares.
}
  \label{fig:images}
\end{figure}
The proof is quite technical. We will proceed as shown in Figure~\ref{fig:images}. First, we will assume that the set \( \Omega_0 \) is convex and we will divide it between its interior $\Omega_0^\delta$ and the points close to the boundary, $\Omega_0^{\delta,c}$. One can imagine $\Omega_0$ as being a \textit{window} where $\Omega_0^\delta$ is its \textit{glass} and $\Omega_0^{\delta,c}$ its \textit{frame} of width \(\delta\). In the glass part, we will define a coupling that achieves the appropriate limit, while in the frame, we will create one that tends to $0$ as we let $\eps\to 0$ first and then  \(\delta \to 0\). For the general case, we will cover most of $\Omega_0$ by composing a sort of \textit{stained-glass window} formed by convex windows $\Omega_{0,1}, \Omega_{0,2}, etc...$. In each of the windows contained within the interior of \( \Omega_0 \), we will use the same coupling as before. In those touching the boundary of \( \Omega_0 \), we will use a coupling that does not blow up and decreases as the number of windows increases, albeit using many of smaller size.

Now we will present some technical results for bounding frame couplings in both the convex and general $\Omega_0$ cases.


\begin{Lemma}\label{Lemma:firstEstimates}
Let $\rho_0$ and $g$ be as in Assumption~\ref{Assumptions}. Let $s:\Omega_0\mapsto [a,b]$ for some $0<a<b<\infty
$ be a given function and $\Omega\subset \Omega_0$ be any open set such that 
    there exists $r_0>0$ and $\theta\in (0,\pi)$ such that for  every $i\in \{1, 2\}$ and $\x\in \Omega$ there exists a cone
    \begin{equation}
        \label{ConeCond}
        \mathcal{C}_{\x, {\bf v},r_0, \sigma}=\left\{ \x+{\bf u}: \|{\bf v}\|\|{\bf u}\| \cos\left(\frac{1}{2}\theta\right) \leq \langle {\bf v} ,  {\bf u }
 \rangle \leq \|{\bf v}\|r_0  \right\}
    \end{equation} 
    with vertex $\x$,  height $r_0$ and angle $\theta$ such that $\mathcal{C}_{\x, {\bf v},r_0, \sigma}\subset \Omega$.
Then there exists  $ \eps_0$ and $C=C(c,a,b)$
such that for $\eps\leq \eps_0$ the following holds:
\begin{enumerate}
    \item \label{Lemma:firstEstimates1} There exists a function 
$ \psi_\eps $ such that
$$s(\x)=\frac{1}{\eps}\int_{\Omega} (\psi_\eps (\x)-D(\x, \nabla g^*(\x') )_+\diff \rho_0(\x') , \quad \text{for all } \x\in \Omega.   $$
\item \label{Lemma:firstEstimates2} There exists $\eps_0$ and $C>0$ such that for every  $\x\in \Omega$ and $\eps\leq \eps_0$, 
\begin{equation}
    \label{eq:maxceps}
    \frac{\varepsilon^{\frac{2}{d+2}} }{C} \leq  \psi_\eps(\x) \leq \varepsilon^{\frac{2}{d+2}} C,   
\end{equation}
and 
\begin{equation}
    \label{sizesupport}
    \frac{\varepsilon^{\frac{d}{d+2}} }{C}\leq \ell_d(\mathcal{S}_{\varepsilon, \x})\leq C\varepsilon^{\frac{d}{d+2}},
\end{equation}
where $\mathcal{S}_{\varepsilon, \x}= \{ \x'\in \Omega_0:  \psi_\eps (\x)-D(\x, \nabla g^*(\x')  \geq 0\}.$
\item \label{Lemma:firstEstimates3} For every  $\x'\in \Omega$, there exists a $c'>0$ such that
$$ \frac{1}{\eps}\int_{\Omega} (\psi_\eps (\x)-D(\x, \nabla g^*(\x') )_+\diff \rho_0(\x) \geq c'. $$
\end{enumerate} 

\end{Lemma}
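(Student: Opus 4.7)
The three parts are variations on a single implicit-function argument: (1) defines $\psi_\eps(\x)$ through the $\rho_0$-mass of a Bregman sub-level set, (2) pins down the scale of that implicit solution, and (3) repeats the same integral estimate but integrating in $\x$ rather than in $\x'$. Throughout, the quadratic two-sided bounds of Lemma~\ref{LemmaDiver} and the cone property \eqref{ConeCond} reduce everything to the explicit computations provided by Proposition~\ref{Lemma:elementary}.

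For Part~1, fix $\x\in\Omega$ and set
$$F_\x(\psi):=\frac{1}{\eps}\int_\Omega\big(\psi-D(\x,\nabla g^*(\x'))\big)_+\diff\rho_0(\x').$$
The function $F_\x$ is continuous, nondecreasing, vanishes at $\psi=0$, tends to $+\infty$ as $\psi\to\infty$, and is strictly increasing on $(0,\infty)$ because \eqref{quadraticBoundsDiv} together with Assumption~\ref{Assumptions}, 3 guarantees that $\{\x'\in\Omega:D(\x,\nabla g^*(\x'))<\psi\}$ has positive $\rho_0$-measure for every $\psi>0$. The intermediate value theorem then produces the required $\psi_\eps(\x)$ solving $F_\x(\psi_\eps(\x))=s(\x)\in[a,b]$.

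For Part~2, insert the sandwich $\tfrac{\sigma_m}{2}\|\x-\x'\|^2\leq D(\x,\nabla g^*(\x'))\leq\tfrac{\sigma_M}{2}\|\x-\x'\|^2$ from \eqref{quadraticBoundsDiv}. The upper bound on $D$ combined with the cone \eqref{ConeCond} of aperture $\theta$ and height $r_0$, together with the change of variables $\u=\sqrt{\sigma_M}(\x'-\x)$ and Proposition~\ref{Lemma:elementary}, yields
$$F_\x(\psi)\ \geq\ \frac{\lambda\,\omega_\theta\,C_d^{(1)}}{\eps\,\sigma_M^{d/2}}\,\psi^{(d+2)/2}\qquad\text{whenever }\sqrt{2\psi/\sigma_M}\leq r_0,$$
where $\omega_\theta\in(0,1)$ is the fixed solid-angle fraction cut out by the cone. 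Symmetrically, the lower bound on $D$ and Assumption~\ref{Assumptions}, 3 give $F_\x(\psi)\leq\tfrac{\Lambda\,C_d^{(1)}}{\eps\,\sigma_m^{d/2}}\,\psi^{(d+2)/2}$. Inverting these two bounds at the value $F_\x(\psi_\eps(\x))=s(\x)\in[a,b]$ yields $\psi_\eps(\x)\asymp\eps^{2/(d+2)}$, which is \eqref{eq:maxceps}. The size estimate \eqref{sizesupport} is then immediate because the same sandwich gives
$$\big\{\x'\in\Omega:\|\x-\x'\|\leq\sqrt{2\psi_\eps(\x)/\sigma_M}\big\}\subset\mathcal{S}_{\eps,\x}\subset\big\{\x'\in\Omega_0:\|\x-\x'\|\leq\sqrt{2\psi_\eps(\x)/\sigma_m}\big\},$$
and the left-hand set contains the sector cut out by \eqref{ConeCond}, producing matching volumes of order $\psi_\eps(\x)^{d/2}\asymp\eps^{d/(d+2)}$. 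For Part~3, fix $\x'\in\Omega$, substitute the pointwise lower bound $\psi_\eps(\x)\geq C^{-1}\eps^{2/(d+2)}$ from Part~2, and repeat the same lower estimate using the cone attached to $\x'$: Proposition~\ref{Lemma:elementary} gives
$$\frac{1}{\eps}\int_\Omega\big(\psi_\eps(\x)-D(\x,\nabla g^*(\x'))\big)_+\diff\rho_0(\x)\ \geq\ \frac{\lambda\,\omega_\theta\,C_d^{(1)}}{\eps\,\sigma_M^{d/2}}\big(C^{-1}\eps^{2/(d+2)}\big)^{(d+2)/2}\ =:\ c'\ >\ 0,$$
with $c'$ independent of $\x'$.

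The only delicate point is that, since $\x$ may lie near $\partial\Omega$, the support ball $\{\|\x-\x'\|\lesssim\eps^{1/(d+2)}\}$ is not contained in $\Omega$, so a naive full-space application of Proposition~\ref{Lemma:elementary} would overestimate $F_\x(\psi)$. The uniform cone condition \eqref{ConeCond} is the single non-trivial ingredient of the argument: it replaces that ball by a sector lying inside $\Omega$ whose aperture and height $(r_0,\theta)$ do not depend on $\x$, which is exactly what forces the constant $C$ in the statement to depend only on $(a,b,r_0,\theta,\rho_0,g)$ rather than on the point at which the estimates are evaluated.
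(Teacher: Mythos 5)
Your proposal is correct and follows essentially the same route as the paper's proof: existence by inverting the monotone map $\psi\mapsto F_\x(\psi)$ (the paper phrases this as the first-order condition of a convex minimization, which is equivalent to your intermediate value theorem argument), the two-sided quadratic bounds of Lemma~\ref{LemmaDiver} combined with the cone condition and Proposition~\ref{Lemma:elementary} for \eqref{eq:maxceps} and \eqref{sizesupport}, and the uniform lower bound on $\psi_\eps$ re-injected into the cone computation for Part~3 (the paper instead flips the arguments of $D$ and invokes the defining identity, but both are one-line consequences of the same estimates). The only point worth making explicit is that the restriction $\sqrt{2\psi/\sigma_M}\le r_0$ is legitimately applied at $\psi=\psi_\eps(\x)$ because, by monotonicity of $F_\x$, a value of $\psi_\eps(\x)$ exceeding $\sigma_M r_0^2/2$ would force $s(\x)\ge c/\eps\to\infty$, contradicting $s\le b$ for $\eps$ small --- the paper secures this via the uniform convergence $\psi_\eps\to 0$.
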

\begin{proof} We prove each point separately. 

{\it Proof of \ref{Lemma:firstEstimates1}.}
Set $\x\in \Omega_0$ and 
$$ \Theta(a)=\frac{1}{2\,\eps}\int_{\Omega} (a-D(\x, \nabla g^*(\x') )_+^2\diff \rho_0(\x')-a s(\x).  $$
Since $s(\x)>0$, we have $ \lim_{a\to \pm \infty} \Theta(a)=+\infty$. Moreover, as 
$\Theta$ is   convex and differentiable,  there exists a minimizer $a(\x)$ which satisfies 
$$ \frac{1}{\,\eps}\int_{\Omega} (a(\x)-D(\x, \nabla g^*(\x') )_+\diff \rho_0(\x')= s(\x) . $$
This proves the existence. 

{\it Proof of \ref{Lemma:firstEstimates2}.}Again, since $s(\x)>0$ for all $\x\in \Omega_0$,  $\psi_\eps$  is non-negative  and Lipschitz with constant $2\cdot{\rm diam}(\Omega)$  (see the proof of \cite[Lemma~2.5]{nutz2024quadratically}). This implies that 
$ \psi_\eps$   tends uniformly to $0$ in $\Omega$ as $\eps \to 0$. 
Lemma~\ref{LemmaDiver} implies that
\begin{align*}
    \eps s(\x)
    &\leq \int_{\Omega} \left(\psi_\eps(\x)- \frac{\sigma_m(g)}{2}\|\x-\x'\|^2\right)_+ \diff \rho_0(\x')\\
    &\leq \Lambda \int_{\Omega} \left(\psi_\eps(\x)- \frac{\sigma_m(g)}{2}\|\x-\x'\|^2\right)_+ \diff \ell_d(\x')\\
     &\leq \Lambda \int_{\R^d} \left(\psi_\eps(\x)- \frac{\sigma_m(g)}{2}\|\x-\x'\|^2\right)_+ \diff \ell_d(\x')\\
    &\leq C  (\psi_\eps(\x))^{\frac{d+2}{2}},
\end{align*}
for a certain constant $C>0$. Hence, the lower bound in \eqref{eq:maxceps} follows.  Next, 
\begin{align*}
    \eps s(\x) &\geq \int_{\Omega} (\psi_\eps(\x)- \frac{\sigma_M(g)}{2}\|\x-\x'\|^2)_+ \diff \rho_0(\x')\\
    &\geq \lambda \int_{\Omega} \left(\psi_\eps(\x)- \frac{\sigma_M(g)}{2} \|\x-\x'\|^2\right)_+\diff \ell_d(\x').
\end{align*}
By assumption, $\Omega$ contains the cone $ \mathcal{C}_{\x, {\bf v},r_0, \theta}$. As a consequence, 
\begin{align*}
    \eps s(\x) & \geq \lambda\int_{\mathcal{C}_{\x, {\bf v},r_0, \theta}} \left(\psi_\eps(\x)- \frac{\sigma_M(g)}{2} \|\x-\x'\|^2\right)_+\diff \ell_d(\x')\\
    &=\lambda \int_{\mathcal{C}_{\x, {\bf v},r_0, \theta}\cap \mathbb{B}\left(\x,  \left(\frac{2\psi_\eps(\x)}{\sigma_M(g)}\right)^{\frac{1}{2}}\right)} \left\{\psi_\eps(\x)- \frac{\sigma_M(g)}{2} \|\x-\x'\|^2\right\}\diff \ell_d(\x').
\end{align*}
 Since  $ \psi_\eps$   tends uniformly to $0$ in $\Omega$, we can find $\eps_0>0$ such that $$\|\psi_\eps\|_{\mathcal{C}(\Omega)}\leq \left(\frac{2\,r_0}{\sigma_M(g)}\right)^2$$ for all $\eps\leq \eps_0$. This yields 
 $$  \eps s(\x) \geq \lambda \int_{\mathcal{C}_{\x, {\bf v},\psi_\eps(\x), \theta}\cap \mathbb{B}\left(\x,  \left(\frac{2\psi_\eps(\x)}{\sigma_M(g)}\right)^{\frac{1}{2}}\right)} \left\{\psi_\eps(\x)- \frac{\sigma_M(g)}{2} \|\x-\x'\|^2\right\}\diff \ell_d(\x').$$
Hence, there exists a constant $C(\theta,d, \sigma_M(g))>0$ depending only on $d$, $\sigma_M(g)$ and the $\mathcal{H}^{d-1}$-measure of the sector 
$$ \mathcal{S}^{d-1} \cap \left\{ {\bf u}: \|{\bf e}_1\|\|{\bf u}\| \cos\left(\frac{1}{2}\theta\right) \leq \langle {\bf e}_1 ,  {\bf u }
 \rangle   \right\},$$
where $ {\bf e}_1$ is the first element of the canonical basis of $\R^d$,  such that 
$$  \eps s(\x)  \geq C(d, \theta) \lambda \left( \psi_\eps(\x)\right)^{\frac{d+2}{2}}. $$
This gives the upper bound in \eqref{eq:maxceps}.  From here it is easy to obtain 
\eqref{sizesupport} via the nesting
$$ \mathbb{B}\left(\x, \frac{1}{C}\psi_\eps(\x)\right)\subset \mathcal{S}_{\varepsilon, \x}\subset  \mathbb{B}\left(\x, C\, \psi_\eps(\x)\right),  $$
which are consequence of Lemma~\ref{LemmaDiver}.

{\it Proof of \ref{Lemma:firstEstimates3}.} 
By the previous points, There exist $ \eps_0$ and $c> 0$
such that for $\eps\leq \eps_0$ 
\begin{align*}
    \int_{\Omega} (\psi_\eps (\x)-D(\x, \nabla g^*(\x') )_+ d\rho_0(\x)&\geq \int_{\Omega} \left(c\eps^{\frac{2}{d+2}}- D(\x, \nabla g^*(\x'))\right)_+ d\rho_0(\x) \\
    &\geq \int_{\Omega} \left(\frac{c}{C}\psi_\eps (\x')- D(\x, \nabla g^*(\x'))\right)_+ d\rho_0(\x)\\
 \text{(by Lemma~\ref{LemmaDiver}) }  & \geq c_1 \int_{\Omega} \left(\psi_\eps (\x')- D(\x', \nabla g^*(\x))\right)_+ d\rho_0(\x)\\
    &\geq c_1 \,\eps\, \inf_{\x}s(\x).
\end{align*}
Defining $c':=c_1 \, \inf_{\x}s(\x)$ we obtain the desired result.
\end{proof}

\subsection{Feasible coupling for the interior $\Omega_0^\delta$ of a convex $\Omega_0$}\label{section:convex}

In this section we will define a possible coupling for the set $\Omega_0^\delta$. We will do so, in fact, by finding a coupling  $\mu_\eps\in \Pi(\rho_0, \rho_0) $ that attains the correct limit when restricted  to $(\Omega_0^\delta\times \Omega_0)\cup (\Omega_0\times \Omega_0^\delta) $ for $\delta>0$. Then, we will restrict this coupling only to $(\Omega_0^\delta\times \Omega_0)\cup (\Omega_0\times \Omega_0^\delta)$, but by doing so we will lose the part of the mass corresponding with $\Omega_0^\delta\times \Omega_0^\delta $, which we will compensate with an extra term; see Section~\ref{sec:fixing_the_coupling} later on. 

We define, for $\x,\x'\in \Omega_0\times\Omega_0$,
\[ m_\varepsilon ({\bf x}, {\bf x}')=\frac{1}{2\, \eps}\left(C_\eps(\x)+C_\eps(\x')-\frac{1}{2}\|\x-\x'\|_{\nabla^2 g^*(\x) }^2- \frac{1}{2}\|\x-\x'\|_{\nabla^2 g^*(\x') }^2 \right)_+ . \]
The following result is a direct consequence of Lemma~\ref{LemmaDiver}, the definition of $C_\eps$, the uniform continuity of $\rho_0$, $\rho_1$, and the transport map $\nabla g$, combined with the strict positivity of both $\rho_0$ and $\rho_1$. 
\begin{Lemma}\label{lemma:symetric}
  Let Assumption~\ref{Assumptions} hold. Then there exists $C$ such that 
  \begin{equation}
      \label{eq:concentratedSuppoet}
      \|\x-\x'\|\leq C \eps^{\frac{2}{d+2}} ,
  \end{equation}
  for all $ (\x, \x')\in {\rm supp}(m_\eps) $
and 
  \[\eps\sup_{\x, \x'\in \Omega_0}\left\vert  m_\varepsilon ({\bf x}, {\bf x}')-\frac{1}{ \eps}\left(C_\eps(\x)- \frac{1}{2}\|\x-\x'\|_{\nabla^2 g^*(\x)}^2 \right)_+\right\vert =o(\eps^{\frac{2}{d+2}}).  \]
\end{Lemma}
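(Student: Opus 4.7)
The plan is to prove the two claims in order, unpacking the definition of $m_\varepsilon$ and relying on the eigenvalue and regularity control coming from Assumption~\ref{Assumptions} (and its dualization for $g^*$, which gives $\nabla^2 g^* = (\nabla^2 g \circ \nabla g^*)^{-1}$ with eigenvalues pinched between $1/\sigma_M(g)$ and $1/\sigma_m(g)$, and $\mathcal{C}^{0,\alpha}$ regularity inherited from $g\in\mathcal{C}^{2,\alpha}$).

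For the concentrated support, I would note that $(\x,\x') \in \text{supp}(m_\varepsilon)$ forces the argument of the positive part in the definition of $m_\varepsilon$ to be nonnegative. Combining the lower eigenvalue bound on $\nabla^2 g^*$ with the uniform upper estimate $C_\varepsilon(\cdot) \leq \varepsilon^{\frac{2}{d+2}}/(C_d\lambda)^{\frac{2}{d+2}}$ (a direct consequence of the lower bound $\rho_0\rho_1 \geq \lambda^2$ in the definition of $C_\varepsilon$), one immediately gets $\|\x-\x'\|^2 \leq C \varepsilon^{\frac{2}{d+2}}$ for some $C=C(\rho_0,\rho_1,g)$, which yields the announced control on $\|\x-\x'\|$.

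For the asymptotic equivalence, I would write $A:=C_\varepsilon(\x)$, $A':=C_\varepsilon(\x')$, $B:=\tfrac{1}{2}\|\x-\x'\|_{\nabla^2 g^*(\x)}^2$, $B':=\tfrac{1}{2}\|\x-\x'\|_{\nabla^2 g^*(\x')}^2$. The $1$-Lipschitz property of the positive part then yields
\[
\varepsilon\Bigl|m_\varepsilon(\x,\x') - \tfrac{1}{\varepsilon}(A-B)_+\Bigr| \;=\; \tfrac{1}{2}\bigl|(A+A'-B-B')_+ - (2(A-B))_+\bigr| \;\leq\; \tfrac{1}{2}\bigl(|A-A'|+|B-B'|\bigr).
\]
Since the left-hand side is zero whenever both $(A+A'-B-B')_+$ and $(A-B)_+$ vanish, and each of these positivity conditions forces (by the argument of the first part) $\|\x-\x'\| \leq C\varepsilon^{\frac{1}{d+2}}$, it suffices to bound the right-hand side in that shrinking diagonal neighbourhood. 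There, writing $C_\varepsilon=\varepsilon^{\frac{2}{d+2}} h$ with $h$ smooth and bounded below (by the $\mathcal{C}^\beta$ regularity and strict positivity of $\rho_0,\rho_1$ together with the regularity of $\nabla g^*$), one gets $|A-A'| \leq C\varepsilon^{\frac{2}{d+2}}\|\x-\x'\| \leq C'\varepsilon^{\frac{3}{d+2}}$. For $|B-B'|$, the $\mathcal{C}^{0,\alpha}$ regularity of $\nabla^2 g^*$ yields
\[
|B-B'| \;\leq\; \tfrac{1}{2}\|\nabla^2 g^*(\x)-\nabla^2 g^*(\x')\|\,\|\x-\x'\|^2 \;\leq\; C\|\x-\x'\|^{2+\alpha} \;\leq\; C'\varepsilon^{\frac{2+\alpha}{d+2}}.
\]
Both estimates are $o(\varepsilon^{\frac{2}{d+2}})$ uniformly, since $\tfrac{3}{d+2}>\tfrac{2}{d+2}$ and $\alpha>0$, closing the proof.

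The main subtlety I anticipate is ensuring that the pointwise Lipschitz bounds translate into a \emph{uniform} estimate over the whole of $\Omega_0\times\Omega_0$ rather than merely on $\text{supp}(m_\varepsilon)$: this is handled by the observation above that the quantity inside the absolute value vanishes outside a diagonal strip of width $\varepsilon^{\frac{1}{d+2}}$, so the local control on $|A-A'|$ and $|B-B'|$ propagates to a global bound.
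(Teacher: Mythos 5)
Your proposal is correct and fills in exactly the argument the paper intends: the paper states this lemma without proof, calling it a direct consequence of Lemma~\ref{LemmaDiver}, the definition of $C_\eps$, and the regularity and positivity of the data, and your combination of the eigenvalue pinching of $\nabla^2 g^*$, the uniform bound $C_\eps \leq C\eps^{2/(d+2)}$, the $1$-Lipschitz property of $(\cdot)_+$, and the localization to a diagonal strip is precisely that argument written out. Two small remarks. First, your support argument yields $\|\x-\x'\|^2 \leq C\eps^{2/(d+2)}$, i.e.\ $\|\x-\x'\| \leq C\eps^{1/(d+2)}$, not the literal bound $\|\x-\x'\| \leq C\eps^{2/(d+2)}$ displayed in the lemma; the displayed exponent is evidently a typo in the paper (compare Lemma~\ref{LemmaDevelopmentLeps}, which states $\|\x-\x'\|\leq C\eps^{1/(d+2)}$, and the use of \eqref{eq:concentratedSuppoet} in the proof of Lemma~\ref{Lemma:boundonU}, where it is invoked as a bound on the \emph{squared} norm), and you correctly use the $\eps^{1/(d+2)}$ width later, so nothing breaks. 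Second, you assert that $h = \eps^{-2/(d+2)}C_\eps$ is ``smooth'' and deduce a Lipschitz bound $|A-A'|\leq C\eps^{2/(d+2)}\|\x-\x'\|$; Assumption~\ref{Assumptions} only gives $\rho_0,\rho_1 \in \mathcal{C}^\beta$, so $h$ is merely H\"older and the correct estimate is $|A-A'|\leq C\eps^{2/(d+2)}\|\x-\x'\|^{\gamma}$ for some $\gamma>0$ (this is exactly \eqref{ModulusofCeps} in the paper). Since $\|\x-\x'\|\leq C\eps^{1/(d+2)}$ on the relevant set, this is still $o(\eps^{2/(d+2)})$ uniformly, so the conclusion stands.
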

We define the 
function 
$$ {\xi}_{\eps} (\x, \x)=\frac{m_\eps(\x , \x')}{\int m_\eps(\x , \x') \diff (\rho_0\otimes \rho_0)(\x, \x')},  $$
which is a density (it is positive and integrates one) with respect to $\rho_0\otimes \rho_0$. We define also 
$$ \rho_\eps(\x)=\int {\xi}_{\eps} (\x, \x') \diff \rho_0(\x')$$
as the marginal density (w.r.t. $\rho_0$), which by symmetry agrees with 
$\rho_\eps(\x')=\int {\xi}_{\eps} (\x, \x') \diff \rho_0(\x).$
Moreover it satisfies the following properties. We recall the definition of $\Omega_0^\delta$ in~\eqref{def:omega_delta}.

\begin{Lemma}\label{LemmaBoundsUpper} Let Assumption~\ref{Assumptions} hold. Then there exist two positive constants $c \leq C\in (0, \infty)$ and a $delta_0>0$ such that for every $\delta<\delta_0, $ there exists an $\eps_\delta$ such that:
\begin{enumerate}
\item \label{LemmaBoundsUpperPoint1} $\rho_\eps\, \diff \rho_0 \in \mathcal{P}(\Omega_0)$ and  $ \xi_{\eps} \, \diff (\rho_0 \otimes \rho_0)\in \Pi(\rho_\eps \diff \rho_0, \rho_\eps \diff \rho_0)$, 
\item \label{LemmaBoundsUpperPoint2}  
    $ c \leq  \rho_\eps  \leq  C $ in  $\Omega_0$,
\item \label{LemmaBoundsUpperPoint3}  $\rho_\eps\in \mathcal{C}^{0, \alpha}\left(\overline{\Omega_0^{\delta}}\right)$ with 
$ \|\rho_\eps\|_{\mathcal{C}^{0, \alpha}\left(\overline{\Omega_0^{\delta}}\right)} \leq C' \quad \text{for some positive constant }C', and $
\item\label{LemmaBoundsUpperPoint4} as $\eps\to 0$, 
    $$\|\rho_\eps -1\|_{\mathcal{C}\left(\overline{\Omega_0^{\delta}}\right)}\to 0 \quad {\rm and}\quad  \left\|\int m_{\eps} (\cdot, \x') \diff \rho_0(\x') -1\right\|_{\mathcal{C}\left(\overline{\Omega_0^{\delta}}\right)}\to 0 . $$
\end{enumerate}
\end{Lemma}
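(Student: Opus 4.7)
The first point is a direct bookkeeping: by construction $\xi_{\eps}$ integrates to one against $\rho_0\otimes\rho_0$, and the symmetry $m_\eps(\x,\x')=m_\eps(\x',\x)$ makes both marginals of $\xi_\eps\,d(\rho_0\otimes\rho_0)$ equal to $\rho_\eps\,d\rho_0$. Setting
\[
I_\eps(\x):=\int m_\eps(\x,\x')\,d\rho_0(\x')\quad\text{and}\quad N_\eps:=\int I_\eps\,d\rho_0,
\]
we have $\rho_\eps=I_\eps/N_\eps$, so points 2--4 reduce to estimates of $I_\eps$ and $N_\eps$.

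The core computation goes as follows. By Lemma~\ref{lemma:symetric} one can replace $m_\eps(\x,\x')$ by the one-sided kernel $\eps^{-1}(C_\eps(\x)-\tfrac12\|\x-\x'\|^2_{\nabla^2 g^*(\x)})_+$ with error $o(\eps^{2/(d+2)})/\eps$ supported on a set of Lebesgue measure $\asymp\eps^{d/(d+2)}$ (see Lemma~\ref{Lemma:firstEstimates}), so the error contributes $o(1)$ to $I_\eps$. For $\x\in\Omega_0^\delta$ and $\eps<\eps_\delta$ the relevant $\x'$--support is contained in $\Omega_0$, and after the substitution $\u=\eps^{-1/(d+2)}[\nabla^2 g^*(\x)]^{1/2}(\x-\x')$ the integral reads
\[
I_\eps(\x)=\frac{\rho_0(\x)+o(1)}{\eps\,\det(\nabla^2 g^*(\x))^{1/2}}\,C_\eps(\x)^{(d+2)/2}\,C_d^{(1)}+o(1).
\]
Using $C_d^{(1)}=C_d$, the definition of $C_\eps$, and the Monge--Amp\`ere identity $\det(\nabla^2 g^*(\x))=\rho_0(\x)/\rho_1(\nabla g^*(\x))$, this collapses to $I_\eps(\x)=1+o(1)$ uniformly on $\overline{\Omega_0^{\delta}}$. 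Integrating against $\rho_0$ then gives $N_\eps\to 1$, which together prove point 4.

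For point 2, extending the $\x'$--integration to $\R^d$ yields $I_\eps(\x)\le C$ on all of $\Omega_0$, hence the upper bound. For the lower bound on $\Omega_0^{\delta,c}$ we invoke the cone condition of Assumption~\ref{Assumptions}.1 exactly as in the proof of Lemma~\ref{Lemma:firstEstimates}, Point~\ref{Lemma:firstEstimates3}: for every $\x\in\Omega_0$, the support of $m_\eps(\x,\cdot)$ still contains a cone inside $\Omega_0$ of Lebesgue measure $\asymp\eps^{d/(d+2)}$, on which the kernel is comparable to $\eps^{-1}C_\eps(\x)\asymp\eps^{-d/(d+2)}$. This produces a uniform lower bound $I_\eps\ge c>0$ on $\Omega_0$, and combined with $N_\eps\to 1$ yields $c\le\rho_\eps\le C$.

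The last and most delicate point is the H\"older estimate of point 3. The idea is to use the same rescaling: for $\x\in\overline{\Omega_0^{\delta}}$ and small $\eps$ we can write
\[
I_\eps(\x)=\tfrac12\int \bigl(B_\eps(\x,\u)\bigr)_+\,\rho_0\!\bigl(\x+\eps^{1/(d+2)}[\nabla^2 g^*(\x)]^{-1/2}\u\bigr)\det(\nabla^2 g^*(\x))^{-1/2}\,d\u,
\]
where $B_\eps(\x,\u)$ is built from $h(\x):=\eps^{-2/(d+2)}C_\eps(\x)$, $h(\x+\eps^{1/(d+2)}[\cdots]\u)$, and the quadratic forms $\tfrac12\|\u\|^2_{\nabla^2 g^*(\x)}$ and $\tfrac12\|\u\|^2_{\nabla^2 g^*(\x+\cdots)}$. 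By Assumption~\ref{Assumptions} and Theorem~\ref{Theorem:Caffarelli}, all of these ingredients are $\mathcal{C}^{0,\alpha}$ in $\x$ with $\eps$--independent norms on $\overline{\Omega_0^{\delta}}$. Uniform compactness of the effective $\u$--support (Lemma~\ref{lemma:symetric}), Lipschitzness of $(\cdot)_+$, and H\"older continuity of $\rho_0$ then give $|I_\eps(\x)-I_\eps(\y)|\le C\|\x-\y\|^\alpha$, and similarly for $N_\eps$ so that $\|\rho_\eps\|_{\mathcal{C}^{0,\alpha}(\overline{\Omega_0^{\delta}})}\le C'$. The main obstacle I foresee is precisely this step: a na\"\i ve Lipschitz estimate before rescaling introduces a spurious factor $\eps^{-1/(d+2)}$ from the narrow support of $m_\eps$, which is exactly what the substitution $\u=\eps^{-1/(d+2)}[\nabla^2 g^*(\x)]^{1/2}(\x-\x')$ is designed to absorb; requiring $\x\in\Omega_0^{\delta}$ (rather than all of $\Omega_0$) is what makes the rescaled $\u$--integral over $\R^d$ a valid approximation with uniform constants.
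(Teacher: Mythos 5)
Your proposal is correct in substance and matches the paper's argument for points 1, 2 and 4: the upper bound in point 2 by extending the inner integral to $\R^d$, the lower bound via the cone condition as in Lemma~\ref{Lemma:firstEstimates}, and point 4 by replacing $m_\eps$ with the one-sided kernel (Lemma~\ref{lemma:symetric}), changing variables $\u=[\nabla^2 g^*(\x)]^{1/2}(\x-\x')$, and invoking Proposition~\ref{Lemma:elementary} together with the Monge--Amp\`ere identity. Where you genuinely diverge is point 3. The paper compares $I_\eps(\z)$ and $I_\eps(\z')$ directly through the \emph{translation} $\x'\mapsto\x'+\z-\z'$, then absorbs the H\"older moduli of $C_\eps$ and of $\|\cdot\|^2_{\nabla^2 g^*(\cdot)}$ into additive errors of size $\eps^{2/(d+2)}\|\z-\z'\|^\alpha$ on a support of measure $\asymp\eps^{d/(d+2)}$, which after dividing by $\eps$ yields exactly $C\|\z-\z'\|^\alpha$. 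You instead rescale to unit scale with the $\x$-dependent map $\u=\eps^{-1/(d+2)}[\nabla^2 g^*(\x)]^{1/2}(\x-\x')$ and argue H\"older continuity of the rescaled integrand. Both routes correctly neutralize the spurious $\eps^{-1/(d+2)}$ factor you identify, but your version has a small technical cost the paper's translation trick avoids: the composition $\x\mapsto h\bigl(\x+\eps^{1/(d+2)}[\nabla^2 g^*(\x)]^{-1/2}\u\bigr)$ composes a $\mathcal{C}^{0,\alpha}$ function with a map that is only identity-plus-H\"older-$\alpha$ (since $\nabla^2 g^*$ is merely H\"older), so a uniform estimate delivers exponent $\alpha^2$ rather than $\alpha$. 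This is harmless for everything downstream (Caffarelli's theorem and the stability estimate \eqref{stabilty} only need some positive H\"older exponent), but if you want the exponent $\alpha$ stated in the lemma you should either adopt the paper's translation argument or freeze the matrix $[\nabla^2 g^*(\x)]^{1/2}$ at one of the two base points before comparing.
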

\begin{proof} Point \ref{LemmaBoundsUpperPoint1} holds by construction. We prove each of the remaining points separately. 

{\it Proof of \ref{LemmaBoundsUpperPoint2}} By definition of ${\xi}_{\eps}$, it is enough to show the existence of two positive constants, $c$ and $C$ such that 
$ c \leq \int m_\eps(\x, \x') \diff \rho_0(\x) \leq C $
for all $\x\in \Omega_0$. 
Set $\x, \x'\in \Omega_0$.
 By Assumption~\ref{Assumptions}, we have 
 \begin{equation}
     \label{boundCandDiv}
  C_\eps(\x)\leq \frac{ \eps^{\frac{2}{d+2}}}{C_d^{\frac{2}{d+2}}\lambda^{\frac{2}{d+2}}} \quad  {\rm and} \quad \frac{1}{2}\|\x-\x'\|_{\nabla^2 {g^*}(\x) }^2 \geq \frac{\sigma_m(g)}{2}\|\x-\x'\|^{{2}} .
 \end{equation}
As a consequence, 
\begin{align*}
    \int m_\eps(\x, \x') \diff \rho_0(\x)&\leq  \int \frac{1}{ \eps}\left( \frac{\eps^{\frac{2}{d+2}}}{C_d^{\frac{2}{d+2}} \lambda^{\frac{2}{d+2}}} -\frac{\sigma_m(g)}{2}\|\x-\x'\|^2 \right)_+ \diff \rho_0(\x)\\
    &\leq \Lambda\int \frac{1}{ \eps}\left( \frac{ \eps^{\frac{2}{d+2}}}{C_d^{\frac{2}{d+2}}\lambda^{\frac{2}{d+2}}} -\frac{\sigma_m(g)}{2}\|\x-\x'\|^2 \right)_+ \diff \ell_d(\x)\\
    &=\Lambda\int \frac{1}{ \eps}\left( \frac{C_d \eps^{\frac{2}{d+2}}}{C_d^{\frac{2}{d+2}}\lambda^{\frac{2}{d+2}}} -\frac{\sigma_m(g)}{2}\|\x\|^2 \right)_+ \diff \ell_d(\x).
\end{align*}
The latter quantity, call it $C$, does not depend on $\eps$ and is finite, see Proposition~\ref{Lemma:elementary}.   The reverse inequality 
$$ \int m_\eps(\x, \x') \diff \rho_0(\x) \geq \lambda\int_{\Omega_0} \frac{1}{ \eps}\left( \frac{ \eps^{\frac{2}{d+2}}}{C_d^{\frac{2}{d+2}}\Lambda^{\frac{2}{d+2}}} -\frac{\sigma_M(g)}{2}\|\x-\x'\|^2 \right)_+ \diff \ell_d(\x) $$
holds by the same means. From here, we conclude as in the proof  Lemma~\ref{Lemma:firstEstimates}, claim~\ref{LemmaBoundsUpperPoint2}.

{\it Proof of \ref{LemmaBoundsUpperPoint3}.} Since  $\int m_\eps(\x , \x') \diff (\rho_0\otimes \rho_0)(\x, \x')$ and $\rho_0$ are uniformly bounded away from $0$ and $\infty$, it is enough to upper bound the $\mathcal{C}^{0, \alpha}\left(\overline{\Omega_0^{\delta}}\right)$-norm of  the function 
$$ \z\mapsto \int m_\eps(\z , \x) \diff \rho_0(\x). $$  
Set 
$ \z, \z'\in \Omega_0^\delta $ and assume w.l.o.g. that 
$\|\z-\z'\|^\alpha\leq  \frac{\sigma_m(g)}{4C_1}, $
 where  $C_1>0$ is the constant  such that 
\begin{equation}
    \label{boundHolderg}
    \vert \|\x\|_{\nabla^2 g^*({\bf a}) }^2- \|\x\|_{\nabla^2 g^*({\bf b}) }^2 \vert \leq C_1 \, \|\x\|^2 \|{\bf a}-{\bf b}\|^{\alpha},\quad \text{for all } {\bf a}, {\bf b}, \ \x\in \Omega_0
\end{equation}
which due to the fact that $\nabla^2 g(\z') $ is H\"older  continuous.
Since $ C_d^{\frac{-2}{d+2}}\ \big( \rho_0(\z)\rho_1(\nabla g^*(\z)) \big)^{\frac{-1}{d+2}}$  is also a H\"older  continuous function, here exists $C_2>0$ 
\begin{equation}
    \label{ModulusofCeps}
    |C_\eps(\z')-C_\eps(\z)| \leq C_2\|\z-\z'\|^{\alpha} \eps^{\frac{2}{d+2}}. 
\end{equation}
We aim at showing that 
\begin{equation}\label{claimHolder}
    \int m_\eps(\z , \x) \diff \rho_0(\x)\leq  \int m_\eps(\z' , \x) \diff \rho_0(\x) + C \| \z- \z'\|^\alpha, 
\end{equation}
for some constant $C>0$. We call 
$$ \mathcal{S}_\z := \left\{\x\in \Omega_0:  C_\eps(\x)+C_\eps({\bf z}) -\frac{1}{2}\|\x-\z\|_{\nabla^2 {g^*}(\x) }^2- \frac{1}{2}\|\x-\z\|_{\nabla^2 {g^*}(\z) }^2>0\right\}$$
and observe that, due to \eqref{eq:concentratedSuppoet}, there exists $C_3>0$ such that  $  \sup_{{\bf u}\in \mathcal{S}_\z } \|{\bf u}-{\bf z}\|\leq C_3 \eps^{\frac{1}{d+2}}.$
Hence, we can find $\eps_\delta>0$ such that  $\bigcup_{{\bf u}\in \Omega^\delta} \mathcal{S}_{{\bf u}}\subset \Omega^{\frac{\delta}{2}}  $ for all $\eps\leq \eps_\delta$. The rest of the proof always assume that $\eps\leq \eps_\delta$.  Hence,
\begin{align*}
    &\int m_\eps(\z , \x) \diff \rho_0(\x)\\
    &= \frac{1}{2\, \eps}\int_{\Omega_0} \bigg(C_\eps(\x)+C_\eps({\bf z}) -\frac{1}{2}\|\x-\z\|_{\nabla^2 {g^*}(\x) }^2- \frac{1}{2}\|\x-\z\|_{\nabla^2 {g^*}(\z) }^2 \bigg)_+  \rho_0(\x)\diff\ell_d(\x)\\
    & = \frac{1}{2\, \eps}\int_{\R^d} \bigg(C_\eps(\x)+C_\eps({\bf z}) -\frac{1}{2}\|\x-\z\|_{\nabla^2 {g^*}(\x) }^2- \frac{1}{2}\|\x-\z\|_{\nabla^2 {g^*}(\z) }^2 \bigg)_+  \rho_0(\x)\diff\ell_d(\x).  
\end{align*}
 Making the change of variables $\x'  = T_{\z,\z'}(\x)= \x +\z'- \z$, 
one gets 
\begin{align*}
    &\int m_\eps(\z , \x) \diff \rho_0(\x)
    \\
    & = \frac{1}{2\, \eps}\int_{\R^d} \bigg(C_\eps(\x' + \z - \z' )+C_\eps({\bf z}) -\frac{1}{2}\|\x'-\z'\|_{\nabla^2 {g^*}(\x' + \z - \z' ) }^2- \frac{1}{2}\|\x'-\z'\|_{\nabla^2 {g^*}(\z) }^2 \bigg)_+ \\
    & \qquad \qquad \rho_0(\x'-\z'+\z) \diff\ell_d(\x'). 
\end{align*}
Via \eqref{boundHolderg} and \eqref{ModulusofCeps}, the bound
\begin{align*}
   &\int m_\eps(\z , \x) \diff \rho_0(\x)\\
   &\leq  \frac{1}{2\, \eps}\int_{\R^d} \bigg(C_\eps(\x')+C_\eps({\bf z}') -\frac{1}{2}\|\x'-\z'\|_{\nabla^2 {g^*}(\x' ) }^2-\frac{1}{2}\|\x'-\z'\|_{\nabla^2 {g^*}(\z') }^2\\
   & \qquad\qquad + 2\, C_2\eps^{\frac{2}{d+2}}\|\z-\z'\|^\alpha+ 2\,C_1\|\x'-\z'\|^2\|\z-\z'\|^\alpha \bigg)_+
\rho_0(\x'-\z'+\z) \diff\ell_d(\x') 
\end{align*}
holds. 
Let $\mathcal{S}'$ be the set where the quantity inside the integral is positive and 
$$ C_5=\left(\frac{4 C_1}{\sigma_m(g)}\left(  \frac{C'\sigma_m(g)}{4C_1}+ \frac{1}{C_d^{\frac{1}{d+2}}\lambda^{\frac{2}{d+2}}}\right)\right)^{\frac{1}{2}}.$$ Since $\|\z-\z'\|^\alpha\leq  \frac{\sigma_m(g)}{4C_1}, $ the nesting 
$ \mathcal{S}' \subset \mathbb{B}\left({\bf z}',  \eps^{\frac{1}{d+2}}C_5\right)$
is derived.  This combined with the bound on the regularity of the density
$\rho_0(\x'-\z'+\z)\leq \rho_0(\x')+C_4\|\z-\z'\|^\alpha $
yield 
\begin{align*}
    &\int m_\eps(\z , \x) \diff \rho_0(\x) \\
    &\leq  \frac{1}{2\, \eps}\int_{\R^d} \bigg(C_\eps(\x')+C_\eps({\bf z}') -\frac{1}{2}\|\x'-\z'\|_{\nabla^2 {g^*}(\x' ) }^2-\frac{1}{2}\|\x'-\z'\|_{\nabla^2 {g^*}(\z') }^2\bigg)_+ \rho_0(\x') \diff\ell_d(\x')\\
    &+C_6 \|\z-\z'\|^{\alpha},
\end{align*}
where 
$$ C_6 =2C_2C_5^{d}\ell_{d}(\mathbb{B}(0,1)) \Lambda +2C_1 C_5^{d+2} \Lambda +C_5 \sup_{\x'\in \Omega^\delta} \int m_\eps(\x, \x') \diff \ell_d(\x), $$
 which is finite (see the proof of  \ref{LemmaBoundsUpperPoint2}).

This yields the first part of the H\"older inequality statement. 
The reverse inequality holds due to symmetry,  exchanging $\z$ by $\z'$.

{\it Proof of \ref{LemmaBoundsUpperPoint4}} 
By \eqref{ModulusofCeps}, Lemma~\ref{LemmaDiver} and the fact that the support of $m_\varepsilon$ decreases with rate $\eps^{\frac{2}{d+2}}$, we have 
\[ m_\varepsilon ({\bf x}, {\bf x}')=\frac{1}{ \eps}\left( C_\eps(\x) -\frac{1}{2}\|\x-\x'\|_{\nabla^2 g^*(\x) }^2 +o(\eps^{\frac{2}{d+2}}) \right)_+ . \]

Set now $\delta>0$ and $\x\in \Omega_0^\delta$. Then there exists $\eps_0>0$ small enough such that 
\[ \int_{\Omega_0^\delta} \left( C_\eps(\x)-\frac{1}{2}\|\x-\x'\|_{\nabla^2 g^*(\x) }^2\right)_+ \diff\ell_d(\x')
    = \int  \left( C_\eps(\x)-\frac{1}{2}\|\x-\x'\|_{\nabla^2 g^*(\x) }^2 \right)_+ \diff\ell_d(\x'),\]
for all $\eps\leq \eps_0$. Now we finish the argument. The change of variables 
$ \u =  [D^2g^* (\x)]^{\frac{1}{2}} (\x-\x') $
 yields
\begin{align*}
    \int m_\eps(\x, \x') d\rho(\x')&= \frac{\rho_0(\x) (1+o(1))}{\det(D^2g^* (\x))^{\frac{1}{2}}} \int_{\Omega_0^\delta} \left( C_\eps(\x)-\frac{\|\u\|^2}{2}\right)_+ d\ell_d(\u)\\
    &= \frac{\rho_0(\x) (1+o(1))}{\det(D^2g^* (\x))^{\frac{1}{2}} (\rho_0(\x) \rho_1[\nabla g^*(\x)]))^{\frac{1}{2}}}.
\end{align*}
The relation $\det(D^2g^* (\x))= \frac{\rho_0(\x) }{\rho_1[\nabla g^*(\x)]}$ yields
\( \left\|\int m_{\eps} (\cdot, \x') \diff \rho_0(\x') -1\right\|_{\mathcal{C}\left(\overline{\Omega_0^{\delta}}\right)}\to 0,
\)
which in turn implies
\(
 \left\|\rho_\varepsilon -1\right\|_{\mathcal{C}\left(\overline{\Omega_0^{\delta}}\right)}\to 0
\). 
\end{proof}

\cite{brenier1991polar,CuestaMatran} established the existence and uniqueness (up to additive constants) of a convex function $\phi_\varepsilon$, for every $\varepsilon$, such that its gradient $\nabla \phi_\varepsilon$ pushes forward the measure $\rho_\varepsilon \diff \rho_0$ to $\diff \rho_0$ That is, 
$ \int  f(\nabla \phi_\eps) \, \rho_\eps \diff \rho_0= \int f \,  \diff \rho_0, $ for all $f\in \mathcal{C}(\Omega_0).$
In particular $\phi_\eps$ solves the boundary value problem 
$$ \det(\nabla^2 \phi_\eps)=\frac{\rho_0\, \rho_\eps}{\rho_0(\nabla \phi_\eps) } \quad {\rm in} \ \Omega_0, \quad \nabla \phi_\eps (\Omega_0)= \Omega_0  $$
and its convex conjugate $\phi_\eps^*$ solves 
$$ \det(\nabla^2 \phi_\eps^*)=\frac{\rho_0 } {\rho_0(\nabla \phi_\eps^*)\, \rho_\eps(\nabla \phi_\eps^*)}  \quad {\rm in} \ \Omega_0, \quad \nabla \phi_\eps^* (\Omega_0)= \Omega_0.  $$
Combining Theorem~\ref{Theorem:Caffarelli} with Lemma~\ref{LemmaBoundsUpper}, we can ensure the existence of $\eps_\delta>0$ such that
\begin{equation}\label{Cdeltanotm}
    \|\nabla \phi_\eps\|_{\mathcal{C}^{1, \alpha}(\Omega_0^\delta)}, \ \|\nabla \phi_\eps^*\|_{\mathcal{C}^{1, \alpha}(\Omega_0^\delta)}\leq C(\delta), \quad \text{for all }\eps\leq \eps_\delta.
\end{equation}
Therefore, standard stability theory (see e.g., \cite[Corollary 5.23]{Villani2008}) and the Arzelà–Ascoli theorem yield
\begin{equation}
    \label{stabilty}
    \|\nabla \phi_\eps^*-{\rm Id}\|_{\mathcal{C}^{1}(\Omega_0^\delta)}, \ \|\nabla \phi_\eps-{\rm Id}\|_{\mathcal{C}^{1}(\Omega_0^\delta)} \to 0, \quad \text{as $ \eps\to 0 $.}
\end{equation}
This limit is crucial to show the following result. 

\begin{Lemma}\label{Lemma:developmentDiveps}
    Let Assumption~\ref{Assumptions} holds. There exists $\delta_0$ such that for all  $0<\delta\leq \delta_0$ there exists a function $\omega=\omega_\delta$, such that $\omega(\eps)\to 0$ as $\eps\to 0$, and $C=C(\delta)>0$ such that
    $$ \left\vert  D(\nabla \phi_\eps^*(\x),\nabla g^*(\nabla \phi_\eps^*(\x')))- \|  \x-\x'\|_{\nabla^2 g^*(\x')}^2 \right\vert \leq   C \| \x-\x'\|^{2}( \| \x-\x'\|^{\alpha}+\omega(\eps)),$$
    and 
    $$ \left\vert  \| \nabla \phi_\eps^*(\x) -\nabla \phi_\eps^*(\x')\|_{    \nabla^2 g^*(\nabla \phi_\eps^*(\x'))}^2- \|  \x-\x'\|_{\nabla^2 g^*(\x')}^2 \right\vert \leq   C \| \x-\x'\|^{2}( \| \x-\x'\|^{\alpha}+\omega(\eps)),$$
    for all $\x, \x'\in  \Omega^{\delta}_0=\{ \x\in \Omega_0: \ {\rm dist  }(\x, \partial \Omega_0)\geq \delta \}.$
\end{Lemma}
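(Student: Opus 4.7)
The plan is to reduce both inequalities to a second-order Taylor expansion of the Bregman divergence (Lemma~\ref{LemmaDiver}) combined with the stability of the transport map $\nabla\phi_\eps^*$ stated in~\eqref{stabilty} and the H\"older regularity of $\nabla^2 g^*$ coming from Assumption~\ref{Assumptions}. Throughout, fix $\delta_0>0$ small enough so that $\Omega_0^{\delta_0/2}$ still satisfies the cone condition; then for $\delta\leq\delta_0$ the convergence~\eqref{stabilty} yields $\|\nabla\phi_\eps^*-{\rm Id}\|_{\mathcal{C}^1(\Omega_0^\delta)}\leq \omega(\eps)$ with $\omega(\eps)\to 0$ depending only on $\delta$.

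First I would apply Lemma~\ref{LemmaDiver} with $\x$ and $\x'$ replaced, respectively, by $\nabla\phi_\eps^*(\x)$ and $\nabla\phi_\eps^*(\x')$. This gives
\[
\Bigl|\, D(\nabla\phi_\eps^*(\x),\nabla g^*(\nabla\phi_\eps^*(\x')))-\tfrac{1}{2}\|\nabla\phi_\eps^*(\x)-\nabla\phi_\eps^*(\x')\|_{\nabla^2 g^*(\nabla\phi_\eps^*(\x))}^2\,\Bigr|\leq C\|\nabla\phi_\eps^*(\x)-\nabla\phi_\eps^*(\x')\|^{2+\alpha}.
\]
Next, the mean value theorem applied to $\nabla\phi_\eps^*$ together with $\|\nabla^2\phi_\eps^*-\mathrm{Id}\|_{\mathcal{C}^0(\Omega_0^\delta)}\leq\omega(\eps)$ yields the key pointwise bound
\[
\|\nabla\phi_\eps^*(\x)-\nabla\phi_\eps^*(\x')-(\x-\x')\|\leq \omega(\eps)\,\|\x-\x'\|,
\]
and in particular $\|\nabla\phi_\eps^*(\x)-\nabla\phi_\eps^*(\x')\|\leq (1+\omega(\eps))\|\x-\x'\|$, so the remainder from Lemma~\ref{LemmaDiver} is absorbed into $C\|\x-\x'\|^{2+\alpha}$.

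The remaining task is to compare the two quadratic forms
\[
\tfrac{1}{2}\|\nabla\phi_\eps^*(\x)-\nabla\phi_\eps^*(\x')\|_{\nabla^2 g^*(\nabla\phi_\eps^*(\x))}^2\quad\text{and}\quad \|\x-\x'\|_{\nabla^2 g^*(\x')}^2.
\]
I would use the polarization identity $\langle a,Aa\rangle-\langle b,Bb\rangle=\langle a-b,A(a+b)\rangle+\langle b,(A-B)b\rangle$ with $a=\nabla\phi_\eps^*(\x)-\nabla\phi_\eps^*(\x')$, $b=\x-\x'$, $A=\nabla^2 g^*(\nabla\phi_\eps^*(\x))$, $B=\nabla^2 g^*(\x')$. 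The first summand is bounded by the $C^1$ stability from the previous paragraph combined with the uniform boundedness $\|\nabla^2 g^*\|\leq 1/\sigma_m(g)$, giving a contribution of order $\omega(\eps)\|\x-\x'\|^2$. The second summand is bounded by the $\mathcal{C}^\alpha$-continuity of $\nabla^2 g^*$ and the inequality
\[
\|\nabla\phi_\eps^*(\x)-\x'\|\leq \|\nabla\phi_\eps^*(\x)-\x\|+\|\x-\x'\|\leq \omega(\eps)+\|\x-\x'\|,
\]
producing a contribution of order $\|\x-\x'\|^2(\omega(\eps)^\alpha+\|\x-\x'\|^\alpha)$. Up to redefining $\omega$ (which stays $o_\eps(1)$), both contributions fit in the target bound $C\|\x-\x'\|^2(\|\x-\x'\|^\alpha+\omega(\eps))$, proving the first inequality.

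The second inequality is strictly easier and uses the same polarization identity with $A=\nabla^2 g^*(\nabla\phi_\eps^*(\x'))$, $B=\nabla^2 g^*(\x')$; the H\"older bound on the second summand becomes $\|\x-\x'\|^2\cdot\omega(\eps)^\alpha$ since $\nabla\phi_\eps^*(\x')\to\x'$ uniformly. The only subtlety I foresee is ensuring that all the constants depend only on $\delta$ (through~\eqref{Cdeltanotm}) and not on $\eps$, which requires being careful in Step~1 where Lemma~\ref{LemmaDiver} was stated on all of $\Omega_0$; for $\x,\x'\in\Omega_0^\delta$ and $\eps$ small one has $\nabla\phi_\eps^*(\x),\nabla\phi_\eps^*(\x')\in\Omega_0^{\delta/2}\subset\Omega_0$, so the lemma applies with its global constant. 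The main obstacle is thus purely bookkeeping: separating the two small parameters $\|\x-\x'\|$ and $\omega(\eps)$ so that every error term ends up bounded by $C(\delta)\|\x-\x'\|^2(\|\x-\x'\|^\alpha+\omega(\eps))$.
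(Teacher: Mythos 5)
Your proposal is correct and follows essentially the same route as the paper's proof: a Taylor expansion of the Bregman divergence via Lemma~\ref{LemmaDiver}, followed by swapping $\nabla^2 g^*(\nabla\phi_\eps^*(\cdot))$ for $\nabla^2 g^*(\x')$ using H\"older continuity and swapping $\nabla\phi_\eps^*$ for the identity using \eqref{stabilty}; your polarization identity is just a compact repackaging of the paper's direct expansion of the square. (Note the displayed statement is missing the factor $\tfrac12$ in front of $\|\x-\x'\|^2_{\nabla^2 g^*(\x')}$ in the first inequality; both your argument and the paper's proof establish, and the paper later uses, the version with the $\tfrac12$.)
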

\begin{proof}
We call $\omega(\eps)= \|\nabla \phi_\eps^*-{\rm Id}\|_{\mathcal{C}^{1}(\Omega_0^\delta)}^{\alpha/2}$, which, by \eqref{stabilty}, tends to $0$ as $\eps\to 0$. 
    Lemma~\ref{LemmaDiver} first and then equation~\eqref{Cdeltanotm} imply that 
\begin{align*}
   & D(\nabla \phi_\eps^*(\x),\nabla g^*(\nabla \phi_\eps^*(\x')))\\
    &= \frac{1}{2}\| \nabla \phi_\eps^*(\x) -\nabla \phi_\eps^*(\x')\|_{    \nabla^2 g^*(\nabla \phi_\eps^*(\x'))}^2 +\mathcal{O}\left( \| \nabla \phi_\eps^*(\x) -\nabla \phi_\eps^*(\x'))\|^{2+\alpha} \right)\\
    &= \frac{1}{2}\| \nabla \phi_\eps^*(\x) -\nabla \phi_\eps^*(\x')\|_{    \nabla^2 g^*(\nabla \phi_\eps^*(\x'))}^2 +\mathcal{O}\left( \| \x-\x'\|^{2+\alpha} \right)
\end{align*}
for all $\x, \x'\in \Omega_0^\delta$. Hence, the proof will be complete after showing that 
\begin{equation}
    \label{theprooffinishes}
     \left \vert \| \nabla \phi_\eps^*(\x) -\nabla \phi_\eps^*(\x')\|_{    \nabla^2 g^*(\nabla \phi_\eps^*(\x'))}^2-\| \x-\x'\|_{    \nabla^2 g^*(\x')}^2\right\vert\leq C \omega(\eps) \| \x-\x'\|^{2} . 
\end{equation}
for all $\x, \x'\in \Omega_0^\delta$ and some $C>0$. 
First we exchange $\nabla^2 g^*(\nabla \phi_\eps^*(\x'))$ by $ \nabla^2 g^*(\x')$. 
Since $\nabla^2 g^*$ is H\"older continuous,  we obtain 
\begin{align*}
   \left \vert \| \nabla \phi_\eps^*(\x) -\nabla \phi_\eps^*(\x')\|_{    \nabla^2 g^*(\nabla \phi_\eps^*(\x'))}^2- \| \nabla \phi_\eps^*(\x) -\nabla \phi_\eps^*(\x')\|_{    \nabla^2 g^*(\x')}^2\right\vert \leq  \omega(\eps) \| \x-\x'\|^{2}. 
\end{align*}
Then we exchange $\nabla \phi_\eps^*$ by the identity 
\begin{align*}
     \| \nabla \phi_\eps^*(\x)& -\nabla \phi_\eps^*(\x')\|_{    \nabla^2 g^*(\x')}^2 \\
     &= \| [\nabla\phi_\eps^*-{\rm Id}](\x) -[\nabla\phi_\eps^*-{\rm Id}](\x')\|_{    \nabla^2 g^*(\x')}^2+ \| \x-\x'\|_{    \nabla^2 g^*(\x')}^2\\
     &\qquad + 2\langle [\nabla\phi_\eps^*-{\rm Id}](\x) -[\nabla\phi_\eps^*-{\rm Id}](\x'),   \x-\x'\rangle_{    \nabla^2 g^*(\x')}\\
     &\leq \| \x-\x'\|_{    \nabla^2 g^*(\x')}^2\left(1+3\omega(\eps)+2\omega(\eps) \right),
\end{align*}
which implies \eqref{theprooffinishes}  and concludes the proof. 
\end{proof}
The following result finds a measure $\mu_\eps$ with the correct marginals and provides the exact description of its density. 

\begin{Lemma}\label{Lemma:Coupling}
 Let Assumption~\ref{Assumptions} hold and   assume further that $\Omega_0$ is convex. Define $$ \mu_\eps:= (\nabla \phi_\eps\times \nabla \phi_\eps)_\#(\xi_{\eps}  \diff (\rho_0 \otimes \rho_0)) $$
 and
 \begin{equation}
    \label{eq:densityOfUeps}
    u_\eps(\x, \x'):= \frac{\diff \mu_\eps}{\diff (\rho_0\otimes  \rho_0)}(\x, \x')=    \frac{\xi_{\eps}(\nabla \phi_\eps^*(\x), \nabla \phi_\eps^*(\x'))  } { \rho_\eps(\nabla \phi_\eps^*(\x))\rho_\eps(\nabla \phi_\eps^*(\x'))}.
\end{equation}
 Then $\mu_\varepsilon$ belongs to $\Pi(\diff \rho_0, \diff \rho_0)$ and, as a consequence, the measure 
$ \pi_\eps^{(1)}=({\rm Id}\times \nabla g )_\# \mu_\eps, $
belongs to $\Pi(\rho_0, \rho_1)$ and it
has density 
$$ \frac{\diff \pi_\eps^{(1)}}{ \diff (\ell_d\otimes \ell_d)}(\x, \y)= u_\eps(\x,\nabla g (\y)) \rho_0(\x) \rho_1(\y).  $$
\end{Lemma}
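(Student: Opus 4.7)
The plan is to prove this lemma as a direct verification: it is essentially an unfolding of definitions combined with two changes of variables. The result is an accounting exercise rather than a substantive argument; the only care required is in tracking the different Brenier maps $\nabla \phi_\eps$ (between $\rho_\eps \diff \rho_0$ and $\diff \rho_0$ on $\Omega_0$) and $\nabla g$ (between $\rho_1$ and $\rho_0$, i.e.\ from $\Omega_1$ to $\Omega_0$), together with their inverses $\nabla \phi_\eps^*$ and $\nabla g^*$.

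First I would verify that $\mu_\eps \in \Pi(\rho_0, \rho_0)$. By Lemma~\ref{LemmaBoundsUpper}, point~\ref{LemmaBoundsUpperPoint1}, both marginals of the measure $\xi_\eps \diff (\rho_0 \otimes \rho_0)$ coincide with $\rho_\eps \diff \rho_0$. By construction, $\nabla \phi_\eps$ is the Brenier map pushing $\rho_\eps \diff \rho_0$ forward to $\diff \rho_0$, so applying $\nabla \phi_\eps$ componentwise yields a measure whose two marginals are both $\diff \rho_0$.

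Next, I would derive the formula~\eqref{eq:densityOfUeps} for $u_\eps$. For any bounded Borel test function $h$,
\[
\int h \, \diff \mu_\eps = \int h(\nabla \phi_\eps(\x), \nabla \phi_\eps(\x')) \, \xi_\eps(\x, \x') \, \diff (\rho_0 \otimes \rho_0)(\x, \x').
\]
The key substitution is $\y = \nabla \phi_\eps(\x)$, $\y' = \nabla \phi_\eps(\x')$. Rewriting the push-forward identity $(\nabla \phi_\eps)_\#(\rho_\eps \diff \rho_0) = \diff \rho_0$ in differential form gives
\[
\diff \rho_0(\x) \;=\; \frac{1}{\rho_\eps(\nabla \phi_\eps^*(\y))} \diff \rho_0(\y),
\]
and analogously in $\x'$. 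Applying this in each coordinate separately, one obtains
\[
\int h \, \diff \mu_\eps = \int h(\y, \y') \, \frac{\xi_\eps(\nabla \phi_\eps^*(\y), \nabla \phi_\eps^*(\y'))}{\rho_\eps(\nabla \phi_\eps^*(\y)) \, \rho_\eps(\nabla \phi_\eps^*(\y'))} \, \diff (\rho_0 \otimes \rho_0)(\y, \y'),
\]
which is exactly the claimed density formula.

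Finally, for $\pi_\eps^{(1)}$: by Assumption~\ref{Assumptions}, $\nabla g : \Omega_1 \to \Omega_0$ is a diffeomorphism with inverse $\nabla g^*$, and $\nabla g^*$ pushes $\rho_0$ forward to $\rho_1$. Interpreting the push-forward $({\rm Id} \times \nabla g)_\# \mu_\eps$ via the bijection $\nabla g^{-1} = \nabla g^*$ on the second coordinate (which must be done since $\mu_\eps$ is supported on $\Omega_0 \times \Omega_0$), one immediately sees that $\pi_\eps^{(1)}$ has first marginal $\rho_0$ and second marginal $(\nabla g^*)_\# \rho_0 = \rho_1$, so $\pi_\eps^{(1)} \in \Pi(\rho_0, \rho_1)$. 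To extract the Lebesgue density, I would change variables $\x' = \nabla g(\y)$ in the second coordinate: by the Monge--Amp\`ere identity $\det \nabla^2 g(\y) = \rho_1(\y)/\rho_0(\nabla g(\y))$, one has $\rho_0(\x') \diff \x' = \rho_1(\y) \diff \y$, which converts the $\rho_0 \otimes \rho_0$-density $u_\eps$ into the stated Lebesgue density $u_\eps(\x, \nabla g(\y)) \, \rho_0(\x) \, \rho_1(\y)$. No step presents a substantive difficulty.
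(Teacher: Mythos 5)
Your proposal is correct and follows essentially the same route as the paper: the density formula~\eqref{eq:densityOfUeps} is obtained by the change of variables $\y=\nabla\phi_\eps(\x)$, where your ``differential form'' of the push-forward identity is exactly the paper's use of the Monge--Amp\`ere relation $\det(\nabla^2\phi_\eps^*)=\rho_0/\big(\rho_0(\nabla\phi_\eps^*)\,\rho_\eps(\nabla\phi_\eps^*)\big)$, and the marginal condition follows from $(\nabla\phi_\eps)_\#(\rho_\eps\,\diff\rho_0)=\diff\rho_0$ as in the paper. The only difference is order of presentation (you check the marginals first, the paper derives the density first), plus your slightly more explicit treatment of the Lebesgue density of $\pi_\eps^{(1)}$, which the paper leaves implicit.
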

\begin{proof}
    We can find the shape of its density by computing the following integral for an arbitrary bounded continuous function $f$:
\begin{align*}
    & \int f(\nabla \phi_\eps(\x), \nabla \phi_\eps(\x'))  \xi_{\eps}(\x, \x')  \diff (\rho_0 \otimes \rho_0)(\x, \x')\\
     &= \int \frac{
        f(\x, \x' )  \xi_{\eps}(\nabla \phi_\eps^*(\x), \nabla \phi_\eps^*(\x'))  \rho_0(\nabla \phi_\eps^*(\x)) \rho_0(\nabla \phi_\eps^*(\x') )
     }{
        \big[\det(\nabla^{2} \phi_\eps^*(\x))\det(\nabla^{2} \phi_\eps^*(\x'))\big]^{-1}}  \diff\ell_d(\x) \diff \ell_d(\x')\\
     &=\int \frac{f(\x, \x')  \xi_{\eps}(\nabla \phi_\eps^*(\x), \nabla \phi_\eps^*(\x'))  \rho_0(\nabla \phi_\eps^*(\x)) \rho_0(\nabla \phi_\eps^*(\x') )} {\rho_0(\nabla \phi_\eps^*(\x)) \rho_0(\nabla \phi_\eps^*(\x')) \rho_\eps(\nabla \phi_\eps^*(\x))\rho_\eps(\nabla \phi_\eps^*(\x'))}  \diff\rho_0(\x) \diff \rho_0(\x')\\
     \\
     &=\int \frac{f(\x, \x')  \xi_{\eps}(\nabla \phi_\eps^*(\x), \nabla \phi_\eps^*(\x'))  } { \rho_\eps(\nabla \phi_\eps^*(\x))\rho_\eps(\nabla \phi_\eps^*(\x'))}  \diff\rho_0(\x) \diff \rho_0(\x').
\end{align*}
Since the latter holds for any continuous function $f$,  \eqref{eq:densityOfUeps} holds. 

It is easy to check that by construction 
\begin{align*}
    \int f(\x) u_\eps(\x, \x') \diff\rho_0(\x) \diff \rho_0(\x')&=\int f(\nabla \phi_\eps(\x))  \xi_{\eps}(\x, \x')  \diff (\rho_0 \otimes \rho_0)(\x, \x')\\
    &=\int f(\nabla \phi_\eps(\x))    \diff \rho_\eps(\x)\\
    &=\int f(\x)    \diff \rho_0(\x),
\end{align*}
so that $u_\eps(\x, \x') \diff (\rho_0 \otimes \rho_0)(\x, \x')$ belongs to $\Pi(\rho_0, \rho_0)$. 
\end{proof}
Now we prove some important properties of $u_\eps$.

\begin{Lemma}\label{Lemma:boundonU}
    Let Assumption~\ref{Assumptions} hold and   assume further that $\Omega_0$ is convex. 
Then there exists $\delta_0>0$ such that 
for every $\delta\in (0, \delta_0)$ there exists $\eps_\delta$ such that 
\begin{equation}
    \label{eq:qepsto1}
    \left\| \int_{\Omega^{\delta}_0}  u_\eps(\x, \cdot ) d\rho_0(\x)\right\|_{\mathcal{C}(\Omega^{\delta,c}_0)} \leq \frac{2}{3}
\end{equation}
for all $\eps\leq \eps_\delta$. 
\end{Lemma}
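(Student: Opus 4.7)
The plan is to express $\int_{\Omega_0^\delta} u_\eps(\x,\x')\,d\rho_0(\x)$ via a change of variables as a conditional mass quantity for $\xi_\eps$ over $T(\Omega_0^\delta)$, reduce this to a Lebesgue volume ratio using the Barenblatt-like structure of $\xi_\eps$, and bound the ratio by approximately $1/2$ using the convexity of $\Omega_0^\delta$ and Caffarelli's interior regularity. Concretely, set $T := \nabla\phi_\eps^*$ and use the Jacobian identity $\rho_0(\nabla\phi_\eps(\z))\det(\nabla^2\phi_\eps(\z))=\rho_\eps(\z)\rho_0(\z)$ (a consequence of $\nabla\phi_\eps$ pushing $\rho_\eps\,d\rho_0$ to $d\rho_0$) together with the formula~\eqref{eq:densityOfUeps} for $u_\eps$; the change of variables $\z = T\x$ then gives
\[
\int_{\Omega_0^\delta} u_\eps(\x,\x')\,d\rho_0(\x) \;=\; \frac{\int_{T(\Omega_0^\delta)}\xi_\eps(\z,T\x')\,d\rho_0(\z)}{\int_{\Omega_0}\xi_\eps(\z,T\x')\,d\rho_0(\z)}.
\]

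By Lemma~\ref{lemma:symetric}, $\xi_\eps(\cdot,T\x')$ is supported on $B(T\x',r_\eps)$ with $r_\eps=O(\eps^{1/(d+2)})\to 0$ and, on this shrinking ball, is well-approximated by a symmetric Barenblatt profile centred at $T\x'$. Combined with the continuity and uniform positivity of $\rho_0$ (Assumption~\ref{Assumptions}), the quotient above differs by $o(1)$, uniformly in $\x'$, from the Lebesgue volume ratio $\mathrm{vol}(T(\Omega_0^\delta)\cap B(T\x',r_\eps))/\mathrm{vol}(B(T\x',r_\eps))$.

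To conclude, observe that $\Omega_0^\delta$ is convex (by convexity of $\Omega_0$), and that by Caffarelli's interior regularity~\eqref{Cdeltanotm} applied with $\Omega_0^{\delta/2}\supseteq\overline{\Omega_0^\delta}$, $T$ is a $\mathcal{C}^{2,\alpha}$-diffeomorphism on a neighbourhood of $\overline{\Omega_0^\delta}$ with norm uniformly bounded in $\eps$; hence $T(\Omega_0^\delta)$ is a $\mathcal{C}^{2,\alpha}$-domain with uniformly bounded curvature. Since $T$ is a bijection of $\Omega_0$ and $\x'\in\Omega_0^{\delta,c}$, we have $T\x'\notin T(\Omega_0^\delta)$. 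The classical density theorem then yields, uniformly for such $\x'$,
\[
\frac{\mathrm{vol}\bigl(T(\Omega_0^\delta)\cap B(T\x',r_\eps)\bigr)}{\mathrm{vol}\bigl(B(T\x',r_\eps)\bigr)} \;\le\; \frac{1}{2}+o(1),
\]
as $\eps\to 0$ (either the ball does not meet $T(\Omega_0^\delta)$ and the ratio is $0$, or $T\x'$ lies within $r_\eps$ of $\partial T(\Omega_0^\delta)$ and the local hyperplane description gives $\le 1/2+o(1)$). Choosing $\eps_\delta$ small enough that both $o(1)$ errors are bounded by $1/12$ yields the bound $\le 2/3$.

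The hardest part is the uniform passage from the $\rho_0\xi_\eps$-mass ratio to the Lebesgue volume ratio. When $T\x'$ approaches $\partial\Omega_0$, the support of $\xi_\eps(\cdot,T\x')$ is truncated by the boundary, potentially destroying its approximate symmetry around $T\x'$. Here the cone condition from Assumption~\ref{Assumptions}.1 becomes essential: it provides a cone of definite aperture inside $\Omega_0$ at every boundary point, guaranteeing that enough of the Barenblatt mass survives on each side of $\partial T(\Omega_0^\delta)$ so that the half-ball bound remains asymptotically valid uniformly in $\x'$.
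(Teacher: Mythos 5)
Your overall strategy (rewrite the integral as a mass ratio for the symmetrized Barenblatt density, then use that $\x'$ lies outside the convex set $\Omega_0^\delta$ to cut the mass in half) is the same as the paper's, but two of your reduction steps do not hold as stated. First, the passage from the $\xi_\eps(\cdot,T\x')\,\diff\rho_0$--mass ratio to the Lebesgue volume ratio is invalid: $\xi_\eps(\cdot,T\x')$ is not approximately uniform on its support but is a Barenblatt profile vanishing at the edge of the support, and a volume bound does not control a mass bound (a concentric ball of half the volume carries strictly more than half the mass of a radially decreasing profile). The correct mechanism is not a volume comparison but the symmetry of the profile about its center: one needs $T(\Omega_0^\delta)\cap\mathbb{B}(T\x',r_\eps)$ to be contained in (an $o(r_\eps)$--neighborhood of) a \emph{half-space through the center} $T\x'$, after which the ellipsoidal symmetry of $\big(C_\eps-\tfrac12\|\cdot\|^2_{\nabla^2 g^*}\big)_+$ gives exactly $\le\tfrac12$ of the total integral (this is how the paper argues, via Proposition~\ref{Lemma:elementary}). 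Second, you apply the half-space/density argument to $T(\Omega_0^\delta)$ with $T=\nabla\phi_\eps^*$, which is \emph{not} convex, and your substitute claim that it is a $\mathcal{C}^{2,\alpha}$ domain of uniformly bounded curvature is unavailable: \eqref{Cdeltanotm} only gives $\nabla\phi_\eps^*\in\mathcal{C}^{1,\alpha}$, and $\partial\Omega_0^\delta$ itself is merely Lipschitz for a general convex $\Omega_0$ (corners have no curvature bound). The paper sidesteps both problems by first replacing $m_\eps(\nabla\phi_\eps^*(\x),\nabla\phi_\eps^*(\x'))$ by the profile $\tfrac1\eps\big(C_\eps(\x')-\tfrac12\|\x-\x'\|^2_{\nabla^2 g^*(\x')}+o(\eps^{\frac{2}{d+2}})\big)_+$ in the \emph{original} variables (using \eqref{stabilty} and the H\"older continuity of $C_\eps$ and $\nabla^2 g^*$), then applying the \emph{linear} change of variables $\u=[\nabla^2 g^*(\x')]^{1/2}(\x-\x')$, under which $\Omega_0^\delta$ stays convex and the separating hyperplane at $\u=0$ is exact. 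Your argument can be repaired along these lines (a first-order Taylor expansion of $\nabla\phi_\eps^*$ at $\x'$ with \eqref{stabilty} shows the image of the separating half-space is an $o(r_\eps)$--perturbed half-space through $T\x'$), but as written the two key inequalities are not justified.

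A smaller point: your closing paragraph misdiagnoses the boundary issue. If the support of $\xi_\eps(\cdot,T\x')$ is truncated by $\partial\Omega_0$, the \emph{denominator} shrinks and the ratio could a priori increase; the cone condition does not prevent this. What saves the argument is the dichotomy the paper sets up with $\Omega_0^{\delta/8}$: by \eqref{eq:concentratedSuppoet} and \eqref{stabilty}, if $T\x'$ is within $O(\eps^{\frac1{d+2}})$ of $\partial\Omega_0$ then it is at distance $\gtrsim\delta$ from $T(\Omega_0^\delta)$ and the numerator is identically zero, while otherwise the profile's support sits entirely inside $\Omega_0$ and no truncation occurs.
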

\begin{proof}
Due to \eqref{eq:concentratedSuppoet}, there exists $\delta_0$ such that for all $\delta\leq \delta_0$ there exists  $\eps_\delta$ such that 
$$ \{ \x'\in \Omega_0: \exists \, \x\in \Omega_0^{\frac{\delta}{2}}: \  m_\eps(\x, \x')>0 \}\subset \Omega_{0}^{\frac{\delta}{4}} $$
for all $\eps\leq \eps_\delta$. Equation~\eqref{stabilty} implies that, for a maybe smaller $\eps_\delta$, 
$$ \nabla \phi_\eps^*\left( \Omega_{0}^{\delta} \right)\subset \Omega_{0}^{\frac{\delta}{2}}, \quad \quad \nabla \phi_\eps\left( \Omega_{0}^{\frac{\delta}{4}} \right)\subset \Omega_{0}^{\frac{\delta}{8}}  \quad {\rm and}\quad \nabla \phi_\eps^*\left( \Omega_{0}^{\frac{\delta}{8}} \right)\subset \Omega_{0}^{\frac{\delta}{16}} $$
for all $\eps\leq \eps_\delta$. As a consequence, 
\begin{align*}
    \mathbb{W}_{\eps,\delta}&:=\{ \x'\in \Omega_0: \exists \, \x\in \Omega_0^\delta : \  m_\eps(\nabla \phi_\eps^*(\x), \x')>0 \}\\
 & =   \{ \x'\in \Omega_0: \exists \, \x\in \nabla \phi_\eps^*(\Omega_0^\delta): \  m_\eps(\x, \x')>0 \}\\
 &\subset  \{ \x'\in \Omega_0: \exists \, \x\in \Omega_0^{\frac{\delta}{2}}: \  m_\eps(\x, \x')>0 \}\\
 &\subset \Omega_{0}^{\frac{\delta}{4}}
\end{align*}
and 
\begin{align*}
   \{ \x'\in \Omega_0: \exists \, \x\in \Omega_0^\delta : \  m_\eps(\nabla \phi_\eps^*(\x), \nabla \phi^*(\x'))>0 \}=\nabla \phi (  \mathbb{W}_{\eps,\delta}  ) \subset \nabla \phi ( \Omega_{0}^{\frac{\delta}{4}}  ) \subset \Omega_{0}^{\frac{\delta}{8}}.
\end{align*}
As a consequence, 
$\int_{\Omega^{\delta}_0}  u_\eps(\x, \x' ) d\rho_0(\x)=0$ for every $\x' \in \Omega_{0}^{\frac{\delta}{8},c}$ and $\eps\leq \eps_\delta$. Therefore, we can assume that $\x' \in \Omega_{0}^{\frac{\delta}{8}}$, $\x\in \Omega_0^\delta$ and $\nabla\phi_\eps^*(\x') , \nabla\phi_\eps^*(\x) \in \Omega_{0}^{\frac{\delta}{16}}$. Due to Lemma~\ref{LemmaBoundsUpper},~\ref{LemmaBoundsUpperPoint4},
\begin{align}\label{eq:boundUepsLemmaproof}
    u_\eps(\x, \x')= \frac{\xi_{\eps}(\nabla \phi_\eps^*(\x), \nabla \phi_\eps^*(\x'))  } { 1+o(1) }= \frac{m_{\eps}(\nabla \phi_\eps^*(\x), \nabla \phi_\eps^*(\x'))  } { 1+o(1) },
\end{align}
where the error term is uniform for $\x, \x'\in \Omega_{0}^{\frac{\delta}{8}}$. On the other hand,
\begin{equation}
    \label{eq:boundModulusCepsphi}
\vert C_\eps(\nabla \phi_\varepsilon^*(\x))-C_\eps(\x)\vert  \leq \eps^{\frac{2}{d+2}}  \|\nabla \phi_\eps^*-{\rm Id}\|_{\mathcal{C}^{1}(\Omega_0^\delta)}^{\alpha} =o\left( \eps^{\frac{2}{d+2}} \right), 
\end{equation}
where the last term is a consequence of \eqref{stabilty}.  We observe that for $\x, \x'\in  \Omega_{0}^{\frac{\delta}{8}}$ such that the inequality 
$
m_\eps(\nabla \phi_\eps^*(\x), \nabla \phi_\eps^*(\x'))>0
$ holds,
equation \eqref{eq:boundCeps} implies
$\|\nabla \phi_\eps^*(\x)-\nabla \phi_\eps^*(\x')\|^2 \leq C\, \eps^{\frac{2}{d+2}}, $
and, by \eqref{stabilty}, also 
\begin{equation}\label{boundsoporteLemmaU}
    \|\x-\x'\|^2 \leq C\, \eps^{\frac{2}{d+2}}.
\end{equation}
 Hence, \eqref{boundHolderg} yields 
$$ \|\x-\x'\|^2_{\nabla^2 g^*(\x)}= \|\x-\x'\|^2_{\nabla^2 g^*(\x')}  + o\left( \eps^{\frac{2}{d+2}} \right) , $$
where the error term is uniform for $\x, \x'\in  \Omega_{0}^{\frac{\delta}{8}}$ such that  $ m_\eps(\nabla \phi_\eps^*(\x), \nabla \phi_\eps^*(\x'))>0$. Hence, the previous display, \eqref{boundsoporteLemmaU} and \eqref{eq:boundModulusCepsphi} imply 
$$ m_\eps(\nabla \phi_\eps^*(\x), \nabla \phi_\eps^*(\x')) = \frac{1}{\eps} \left( C_\eps(\x')-\frac{1}{2}\|\x-\x'\|^2_{\nabla^2 g^*(\x')}+ o\left( \eps^{\frac{2}{d+2}} \right)\right)_+ $$
for all $\x, \x'\in  \Omega_{0}^{\frac{\delta}{8}}$ such that  $ m_\eps(\nabla \phi_\eps^*(\x), \nabla \phi_\eps^*(\x'))>0$.
This,  combined with \eqref{eq:boundUepsLemmaproof}, yields
\begin{align*}
     &\int_{\Omega_0^{\delta}}  u_\eps(\x, \x') \diff \rho_0(\x)\\ &= \frac{ \int_{\Omega_0^{\delta}}  m_\eps(\nabla \phi_\eps^*(\x), \nabla \phi_\eps^*(\x')) \diff \rho_0(\x)  } { 1+o(1) }\\
     &\leq \frac{1}{\eps} \frac{  \int_{\Omega_0^{\delta}}  \left( C_\eps(\x')-\frac{1}{2}\|\x-\x'\|^2_{\nabla^2 g^*(\x')}+ o\left( \eps^{\frac{2}{d+2}} \right)\right)_+ \diff \rho_0(\x)  } { 1+o(1) }\\
     \\
     &\leq \frac{1}{\eps} \frac{  (\rho_0(\x')+o(1))\int_{\Omega_0^{\delta}}  \left( C_\eps(\x')-\frac{1}{2}\|\x-\x'\|^2_{\nabla^2 g^*(\x')}+ o\left( \eps^{\frac{2}{d+2}} \right)\right)_+ \diff \ell_d(\x)  } { 1+o(1) }.
\end{align*}
{Changing} variables $\u= T(\x)= [\nabla^2 g^*(\x')]^{\frac{1}{2}} (\x-\x')$ to get (up to uniform $o(1)$ terms) 
$$ \int_{\Omega_0^{\delta}}   u_\eps(\x, \x') \diff \rho_0(\x) \leq  \frac{1}{\varepsilon}\cdot\frac{\rho_0(\x')}{\det(\nabla^2 g^*(\x'))^{1/2}}\int_{T(\Omega_0^{\delta})}  \left( C_\eps(\x')-\frac{\|\u\|^2}{2}\right)_+ \diff \ell_d({\u}).  $$
Without losing generality we can assume that $\x'={\bf 0}$. Then ${\bf 0}\notin T(\Omega_0^{\delta})$. Since $ T(\Omega_0^{\delta})$ is convex, there exist a vector $\v$ and a  hyperplane $H=\{ \z: \langle \z, \v\rangle=0 \} $ such that $T(\Omega_0^{\delta})\subset  H_+:=\{ \z: \langle \z, \v\rangle\leq 0 \} $. As a consequence, up to $o(1)$ additive terms,
\begin{align*}
    \int_{\Omega_0^{\delta}}   u_\eps(\x, \x') \diff \rho_0(\x) &\leq \frac{1}{\varepsilon}\cdot\frac{\rho_0(\x')}{\det(\nabla^2 g^*(\x'))^{1/2}}\int_{H_+}  \left( C_\eps(\x')-\frac{\|\u\|^2}{2}\right)_+ \diff \ell_d({\u})\\
    &= \frac{1}{\varepsilon}\cdot\frac{\rho_0(\x')}{2\,\det(\nabla^2 g^*(\x'))^{1/2}}\int   \left( C_\eps(\x')-\frac{\|\u\|^2}{2}\right)_+ \diff \ell_d({\u})\\
   \text{(by Proposition~\ref{Lemma:elementary}) } &=\frac{1}{\varepsilon}\cdot\frac{\rho_0(\x')}{2\,\det(\nabla^2 g^*(\x'))^{1/2}}|C_\eps(\x')|^{\frac{d+2}{2}} C_d \\
   &=\frac{1}{\varepsilon}\cdot\frac{\rho_0(\x')}{2\,\det(\nabla^2 g^*(\x'))^{1/2}}\left( \frac{\eps^{\frac{2}{d+2}}}{C_d^{\frac{2}{d+2}} \big(\rho_0(\x')\rho_1[\nabla g^*(\x')]\big)^{\frac{1}{d+2}}}\right)^{\frac{d+2}{2}} C_d\\
   &=\frac{\rho_0(\x')}{2\,\det(\nabla^2 g^*(\x'))^{1/2}} \frac{1}{ \big(\rho_0(\x')\rho_1[\nabla g^*(\x')]\big)^{\frac{1}{2}}}.
\end{align*}
Since, $\det(\nabla^2 g^*)=\frac{\rho_0}{\rho_1(\nabla g^*) }$, we get 
$\int_{\Omega_0^{\delta}}   u_\eps(\x, \x') \diff \rho_0(\x)\leq \frac{1}{2}+o(1), $
and the claim follows. 

\end{proof}

\subsection{Valid coupling for the frame $\Omega_0^{\delta,c}$ of a convex $\Omega_0$}\label{sec:fixing_the_coupling}
For  $\delta>0$, we recall the definition \eqref{def:omega_delta} and consider the truncated function
\[
u_\eps \chi_{(\Omega^{\delta}_0\times \Omega_0) \cup ( \Omega_0\times \Omega^{\delta}_0)},
\]
in an effort to avoid the problematic regions of $\Omega_0\times \Omega_0$ where the candidate for coupling is not regular enough. Notice how we allow one of the variables, $\x$ for example, to be close to the boundary as long as the other one, $\x'$, is not. In principle, this may seem inadvisable; however, there is no issue since the support of the coupling will concentrate around the set $\x=\x'$. Refer to the proof of Lemma~\ref{Lemma:boundonU}  above or that of Lemma~\ref{Lemma:limsup} below for further details. Therefore,  the only truly problematic set is $(\Omega^{\delta, c}_0\times\Omega^{\delta, c}_0)$.

Thus, in seeking a true coupling in$(\Omega_0\times\Omega_0)$ we search for a density $v_\eps$ such  that
$$ 1=\int  v_\eps(\x, \x') \diff \rho_0(\x)= \int  v_\eps(\x, \x') \diff \rho_0(\x'), \quad \text{for all } \x, \x'\in \Omega_0,$$
and we want it of the form $$v_\eps= u_\eps \chi_{(\Omega^{\delta}_0\times \Omega_0) \cup ( \Omega_0\times \Omega^{\delta}_0)}+  h_{\varepsilon, \delta}\chi_{(\Omega^{\delta,c}_0\times \Omega^{\delta,c}_0 )}, $$ 
for some positive function $h_{\eps, \delta}$. Irrespective of $ h_{\varepsilon, \delta}$, for $\x'\in  \Omega^{\delta}_0$ the ``coupling'' condition 
$$
1= \int v_\eps(\x,\x') d\rho_0(\x)
$$
must hold. Hence, we need only to fit $h_{\varepsilon, \delta}$ such that 
that 
$$ 1= \int_{\Omega^{\delta,c}_0}   h_{\varepsilon, \delta}(\x, \x') d\rho_0(\x) + \int_{\Omega^{\delta}_0}  u_\eps(\x, \x') d\rho_0(\x), \quad \text{for all }\x'\in  \Omega^{\delta,c}_0 $$
and 
\begin{equation}
    \label{deltaCondition}
    1= \int_{\Omega^{\delta,c}_0}   h_{\varepsilon, \delta}(\x, \x') d\rho_0(\x') + \int_{\Omega^{\delta}_0}  u_\eps(\x, \x') d\rho_0(\x'), \quad \text{for all }\x\in  \Omega^{\delta,c}_0.
\end{equation}
We find first $h_{\eps, \delta}$ such that condition \eqref{deltaCondition} holds and then we construct a coupling based on this. In order to simplify the notation we define
$$ 
    q_\delta(\x):=1-\left(\int_{\Omega^{\delta}_0}  u_\eps(\x, \x') d\rho_0(\x')\right)\chi_{\Omega^{\delta,c}_0}(\x) \in (0,1]. 
$$
Note that due to Lemma~\ref{Lemma:boundonU}, we can assume that $\varepsilon$ is sufficiently small so that
\begin{equation}
    \label{qepsBound}
    \frac13 \leq q_\delta(\mathbf{x}) \leq 1, \quad \text{for all }\x\in \Omega^{\delta, c}.
\end{equation}
Next, we show that \eqref{ConeCond} holds for $\Omega^{\delta, c}$.
\begin{Lemma}\label{Lemma:CondConeCehck}
    Let Assumption~\ref{Assumptions} hold and $\Omega_0$ be convex.  there exist $ \delta_0>0$ and $\theta\in (0,\pi)$ such that for all $\delta\leq \delta_0$ and $\x\in \Omega^{\delta,c}$ there exists a cone
    $\mathcal{C}_{\x, {\bf v},r, \theta} $
    with vertex $\x$,  height $r=r_{\delta}>0$ and angle $\theta$ such that $\mathcal{C}_{\x, {\bf v},r, \sigma}\subset \Omega^{\delta,c}$.  
\end{Lemma}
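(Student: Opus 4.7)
The plan is a case analysis on the distance $d := \mathrm{dist}(\x, \partial \Omega_0) \in (0, \delta]$: if $\x$ is close to the outer boundary of the shell ($d \leq \delta/2$) I will point the cone inward toward the interior of $\Omega_0$, whereas if $\x$ is deeper inside the shell ($d > \delta/2$) I will point the cone outward toward $\partial \Omega_0$. In both cases, convexity together with the interior cone condition of Assumption~\ref{Assumptions} will give containment of the cone in $\Omega_0$, while taking the cone height $r_\delta$ to be a small fraction of $\delta$ will give containment in $\Omega_0^{\delta,c}$.

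The preliminary geometric fact I will need is that near every boundary point $\y \in \partial \Omega_0$, in orthonormal coordinates centered at $\y$ with the inward unit normal along $e_d$, the boundary $\partial \Omega_0$ is locally the graph $\{x_d = f(x')\}$ of a convex function $f$ with $f(\mathbf{0})=0$, $f \geq 0$, and Lipschitz with a constant $L$ depending only on $\theta_0$. This follows in a standard way from the interior cone condition: passing to a limit of interior points converging to $\y$ gives an interior cone at $\y$ of angle $\theta_0$ that forces $f(x') \leq \cot(\theta_0/2)\|x'\|$ locally, and a convex function dominated linearly at the origin is Lipschitz on a slightly smaller ball. I then fix once and for all an angle $\theta \in (0, \theta_0)$ for the constructed cones, chosen so that $L\tan(\theta/2) < 1$.

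For $\x \in \Omega_0^{\delta,c}$, I let $\y$ be the (unique, by convexity) nearest point on $\partial \Omega_0$ and $\boldsymbol{\nu} = (\x - \y)/d$. In \emph{Case A} ($d \leq \delta/2$) I take $\v = \boldsymbol{\nu}$: in local coordinates at $\y$, cone points $(u_1, \ldots, u_{d-1}, d + u_d)$ satisfy $u_d \geq 0$ and $\|(u_1, \ldots, u_{d-1})\| \leq u_d \tan(\theta/2)$, so the inequality $L\tan(\theta/2) < 1$ yields $d + u_d > L\|(u_1, \ldots, u_{d-1})\| \geq f(u_1, \ldots, u_{d-1})$ (containment in $\Omega_0$), and the crude bound $\mathrm{dist}(\x + \u, \partial\Omega_0) \leq \sqrt{(d+r_\delta)^2 + r_\delta^2 \tan^2(\theta/2)}$ is at most $\delta$ for $r_\delta$ a small enough fraction of $\delta$. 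In \emph{Case B} ($d > \delta/2$) I take $\v = -\boldsymbol{\nu}$: containment in the shell is automatic because the cone moves toward $\partial \Omega_0$, so the distance to $\partial\Omega_0$ of any cone point is at most $d \leq \delta$; containment in $\Omega_0$ reduces, via the Lipschitz bound on $f$, to the sufficient condition $r_\delta(1 + L\tan(\theta/2)) < d$, which holds for $r_\delta$ a small fraction of $\delta$ since $d > \delta/2$. Choosing $r_\delta := c\delta$ for a suitable small absolute constant $c > 0$ (depending only on the parameters of Assumption~\ref{Assumptions}) handles both cases with the common angle $\theta$, yielding the claim.

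The main technical hurdle I anticipate is the preliminary graph representation of $\partial \Omega_0$ with a \emph{uniform} Lipschitz constant $L$, since this requires transferring the \emph{interior} cone condition of Assumption~\ref{Assumptions} to a boundary description; this is classical for bounded convex sets but requires a careful limiting argument together with the fact that a convex function controlled linearly at the origin is Lipschitz on a smaller ball. Once this is in place, the rest of the argument is a direct verification in local coordinates.
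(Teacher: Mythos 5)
Your construction is correct in substance, but it takes a genuinely different and heavier route than the paper. The paper's proof has two cases governed by which of the two boundaries the ball $\mathbb{B}(\x,\delta/4)$ can meet: if it meets $\partial\Omega_0$ (and hence not $\partial\Omega_0^{\delta}$), the cone supplied directly by Assumption~\ref{Assumptions}, truncated to a height comparable to $\delta$, already lies in $\Omega_0\cap\mathbb{B}(\x,\cdot)\subset\Omega_0^{\delta,c}$; otherwise $\mathbb{B}(\x,\delta/4)\subset\Omega_0$, and since $\Omega_0^{\delta}$ is convex and $\x\notin\Omega_0^{\delta}$ one separates $\x$ from $\Omega_0^{\delta}$ by a hyperplane, so the half-ball $\mathbb{B}(\x,r)\cap H_+$ avoids $\Omega_0^{\delta}$, sits inside $\Omega_0$, and trivially contains a cone of angle $\pi/4$. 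This needs no boundary regularity beyond convexity itself. You instead build local convex-Lipschitz-graph coordinates at a nearest boundary point and run an explicit inward/outward normal construction; this works, and would generalize to non-convex domains with a uniform two-sided Lipschitz boundary, but at the cost of the ``uniform graph representation'' lemma you flag yourself, whose uniform neighborhood radius and Lipschitz constant must be extracted carefully from the interior cone condition. Two small repairs to your write-up: the nearest point $\y\in\partial\Omega_0$ to an \emph{interior} point of a convex set need not be unique (think of the center of a ball) --- uniqueness is neither true nor needed, only the fact that $\x-\y$ is normal to a supporting hyperplane at $\y$; and in Case B the claim that $\mathrm{dist}(\x+\u,\partial\Omega_0)\le d$ is not ``automatic'' for off-axis cone points --- you need the one-line estimate $\|\x+\u-\y\|^2=(d-t)^2+\|\mathbf{w}\|^2\le d^2$, which holds once $r_\delta(1+\tan^2(\theta/2))\le 2d$, consistent with your choice $r_\delta=c\delta$ and $d>\delta/2$.
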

\begin{proof}
    Let $\delta_0$ be such that $\Omega_0^{\delta}$ exists and is convex and nonempty,  $\delta\leq \delta_0$ and  $r'_\delta= \delta/4$. Then, for $\x \in \Omega^{\delta,c}_0$ and \( r' \leq r_\delta' \), the ball \( \mathbb{B}\left(\mathbf{x}, r'\right) \) intersects at most one of \( \partial \Omega_0^{\delta} \) or \( \partial \Omega_0 \), but not both simultaneously. In the last case, the claim holds by Assumption~\ref{Assumptions}. Therefore, we can assume that \( \mathbb{B}\left(\mathbf{x}, r'\right) \cap  \partial \Omega_0=\emptyset \). 
 Since $\Omega_0^{\delta}$ is convex and $ \x\notin \Omega_0^{\delta} $, there exists a separating half-space $H_+=\{\z: \langle \z, {\bf v}\rangle\geq a\} $ such that $\x\in H_+ $ and $\Omega_{0}^{\delta}\cap  H_+=\emptyset $. Therefore, 
 $$ \mathbb{B}\left(\mathbf{x}, r'  \right) \cap H_+\subset \mathbb{B}\left(\mathbf{x}, r  \right) \cap \Omega^{\delta,c}_0,$$
for every $r'\leq r_\delta$. Setting $\theta=\pi/4$ and 
$$ r=\max\left\{ x_1: \ \|(x_1, \dots, x_d)\|\leq r_\delta' \quad  {\rm and}\quad  \|(x_1, \dots, x_d)\|  \cos\left(\frac{1}{2}\theta\right) \leq 
x_1 \right\}>0$$
the result follows. 
\end{proof}
Due to Lemma~\ref{Lemma:CondConeCehck} and \eqref{qepsBound}, the conditions of Lemma~\ref{Lemma:firstEstimates} hold for $s=q_\delta$ and $\Omega=\Omega^{\delta,c}$. As a consequence, for $\eps$ small enough there exists $c_{\eps,\delta}:\Omega^{\delta,c} \to \R $ such that
$$q_\delta(\x)=\frac{1}{\eps}\int_{\Omega^{\delta,c}_0} (c_{\eps,\delta} (\x)-D(\x, \nabla g^*(\x') )_+\diff \rho_0(\x') , \quad \text{for all } \x\in \Omega^{\delta,c}_0.    $$
Moreover,
there exists $C=C(\delta_0)$ such that for every  $\delta\leq \delta_0$ there exists $\eps_\delta>0$  such that for   $\eps\leq \eps_\delta$, the function $c_{\eps, \delta}$ exists,  

     \begin{align}
         \label{eq:maxcepsCeps} &\varepsilon^{\frac{2}{d+2}} C^{-1} \leq  c_{\eps, \delta}(\x) \leq \varepsilon^{\frac{2}{d+2}} C,  \quad \varepsilon^{\frac{d}{d+2}} C^{-1}\leq \ell_d(\mathcal{S}_{\varepsilon,\delta, \x})\leq C\varepsilon^{\frac{d}{d+2}}, 
 \quad \text{and}\\
    &  \frac{1}{\eps}\int_{\Omega^{\delta,c}_0} (c_{\eps, \delta} (\x)-D(\x, \nabla g^*(\x') )_+\diff \rho_0(\x) \geq C^{-1}, \label{eq:maxcepsqdelta}
     \end{align}
     for every  $\x'\in \Omega_0^{\delta,c}$ 
where $\mathcal{S}_{\varepsilon,\delta, \x}= \{ \x'\in \Omega_0:  c_{\eps, \delta} (\x)-D(\x, \nabla g^*(\x')  \geq 0\}.$

To simplify the exposition, we first introduce a few recurrent quantities. Define
\[ 
M_{\varepsilon, \delta} (\mathbf{a}, \mathbf{b}) := \frac{(c_{\varepsilon, \delta}(\mathbf{a}) - D(\mathbf{a}, \nabla g^*(\mathbf{b}))_+}{\varepsilon},
\]
and the density 
\[
h_{\varepsilon, \delta}(\mathbf{x}, \mathbf{x}') := \int_{\Omega_0^{\delta,c}} \frac{M_{\varepsilon, \delta}(\mathbf{x}', \mathbf{z}) M_{\varepsilon, \delta}(\mathbf{x}, \mathbf{z})}{\int_{\Omega_0^{\delta,c}} M_{\varepsilon, \delta}(\mathbf{v}', \mathbf{z}) \, d\rho_0(\mathbf{v}')} \, d\rho_0(\mathbf{z}),
\]
with the associated measure given by
\[
d\pi_\varepsilon^{(2)}(\mathbf{x}, \mathbf{x}') = h_{\varepsilon, \delta}(\mathbf{x}, \mathbf{x}') \, d\rho_0(\mathbf{x}) \, d\rho_0(\mathbf{x}').
\]
\begin{Lemma}\label{Lemma:integraldaq}
 Let Assumption~\ref{Assumptions} hold. Then  
    $$ q_\delta(\x)= \int_{\Omega^{\delta,c}_0}  h_{\varepsilon, \delta}(\x, \x')\diff\rho_0(\x') \quad {\rm and } \quad q_\delta(\x')= \int_{\Omega^{\delta,c}_0}   h_{\varepsilon, \delta}(\x, \x')\diff\rho_0(\x). $$
    As a consequence, 
    $v_\eps \diff (\rho_0 \otimes \rho_0)\in \Pi(\rho_0, \rho_0)  $,
    where
    $$
    v_\eps= [u_\eps \chi_{(\Omega^{\delta}_0\times \Omega_0) \cup ( \Omega_0\times \Omega^{\delta}_0)}+ h_{\eps, \delta} \chi_{(\Omega^{\delta,c}_0\times \Omega^{\delta,c}_0 )} ]
    $$
\end{Lemma}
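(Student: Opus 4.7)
The two identities for $q_\delta$ follow by a direct Fubini computation, using only the defining property of $c_{\varepsilon,\delta}$ (i.e., the first conclusion of Lemma~\ref{Lemma:firstEstimates} applied with $s=q_\delta$ and $\Omega=\Omega_0^{\delta,c}$, which is legitimate by~\eqref{qepsBound} and Lemma~\ref{Lemma:CondConeCehck}). Concretely, since every factor is nonnegative, Fubini yields
\[
\int_{\Omega_0^{\delta,c}} h_{\varepsilon,\delta}(\x,\x')\diff\rho_0(\x')
=\int_{\Omega_0^{\delta,c}} \frac{M_{\varepsilon,\delta}(\x,\z)\left(\int_{\Omega_0^{\delta,c}} M_{\varepsilon,\delta}(\x',\z)\diff\rho_0(\x')\right)}{\int_{\Omega_0^{\delta,c}} M_{\varepsilon,\delta}(\v',\z)\diff\rho_0(\v')}\diff\rho_0(\z),
\]
and the inner ratio collapses to $1$, so the right-hand side equals $\int_{\Omega_0^{\delta,c}} M_{\varepsilon,\delta}(\x,\z)\diff\rho_0(\z)=q_\delta(\x)$ by the defining relation of $c_{\varepsilon,\delta}$. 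By the symmetry of the construction in the pair $(\x,\x')$, integration over $\x$ (for fixed $\x'$) gives $q_\delta(\x')$ by the identical computation, with the same cancellation of the normalizing factor.

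\textbf{Marginal check for $v_\varepsilon$.} With the two identities in hand, I verify that $v_\varepsilon\,\diff(\rho_0\otimes\rho_0)\in\Pi(\rho_0,\rho_0)$ by splitting on whether the first variable lies in $\Omega_0^{\delta}$ or in $\Omega_0^{\delta,c}$. If $\x\in \Omega_0^{\delta}$ then for every $\x'\in\Omega_0$ one has $(\x,\x')\in \Omega_0^{\delta}\times\Omega_0$, so $v_\varepsilon(\x,\x')=u_\varepsilon(\x,\x')$, and Lemma~\ref{Lemma:Coupling} gives
\[
\int v_\varepsilon(\x,\x')\diff\rho_0(\x')=\int u_\varepsilon(\x,\x')\diff\rho_0(\x')=1.
\]
If instead $\x\in\Omega_0^{\delta,c}$, then $v_\varepsilon$ equals $u_\varepsilon$ on $\{\x\}\times\Omega_0^{\delta}$ and equals $h_{\varepsilon,\delta}$ on $\{\x\}\times\Omega_0^{\delta,c}$, hence
\[
\int v_\varepsilon(\x,\x')\diff\rho_0(\x')
=\int_{\Omega_0^{\delta}} u_\varepsilon(\x,\x')\diff\rho_0(\x')+\int_{\Omega_0^{\delta,c}} h_{\varepsilon,\delta}(\x,\x')\diff\rho_0(\x')
=(1-q_\delta(\x))+q_\delta(\x)=1,
\]
using the definition of $q_\delta$ and the first identity just proved. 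The same dichotomy on the second variable, together with the second identity and the marginal property of $u_\varepsilon$ (from Lemma~\ref{Lemma:Coupling}), shows that $\int v_\varepsilon(\x,\x')\diff\rho_0(\x)=1$ for every $\x'\in\Omega_0$. Positivity of $v_\varepsilon$ is immediate from that of $u_\varepsilon$ and $h_{\varepsilon,\delta}$, so $v_\varepsilon\,\diff(\rho_0\otimes\rho_0)$ is a probability coupling of $\rho_0$ with itself.

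\textbf{Main obstacle.} There is no real analytic obstacle here; the only thing that must be checked carefully is that Lemma~\ref{Lemma:firstEstimates} genuinely applies to define $c_{\varepsilon,\delta}$ on $\Omega_0^{\delta,c}$. This requires the cone condition on $\Omega_0^{\delta,c}$ (provided by Lemma~\ref{Lemma:CondConeCehck}) and the two-sided bounds $1/3\le q_\delta\le 1$ (provided by~\eqref{qepsBound}, which itself relies on Lemma~\ref{Lemma:boundonU}). Once these are in place, the computation is a routine application of Fubini and the ratio-cancellation that is built into the very definition of $h_{\varepsilon,\delta}$.
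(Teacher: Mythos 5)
Your proof is correct and follows essentially the same route as the paper's: the two marginal identities are obtained by the same Fubini interchange, after which the inner integral of $M_{\varepsilon,\delta}(\cdot,\z)$ cancels the normalizing denominator and the defining relation of $c_{\varepsilon,\delta}$ (from Lemma~\ref{Lemma:firstEstimates} applied with $s=q_\delta$) finishes the computation, with the second identity following by the symmetry of $h_{\varepsilon,\delta}$. Your explicit dichotomy verifying that $v_\eps\,\diff(\rho_0\otimes\rho_0)\in\Pi(\rho_0,\rho_0)$ is a welcome elaboration of a step the paper leaves implicit (note it tacitly uses the symmetry of $u_\eps$, which does hold since $\xi_\eps$ is symmetric).
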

\begin{proof}
    Set $\x$
    and compute 
    \begin{align*}
\int_{\Omega^{\delta,c}_0}  h_{\varepsilon, \delta}(\x, \x')\diff\rho_0(\x')&= \int_{\Omega^{\delta,c}_0}  \int_{\Omega^{\delta,c}_0}{\frac{ M_{\eps, \delta} (\x', \z)  M_{\eps, \delta} (\x, \z)  }{\int_{\Omega^{\delta,c}_0} M_{\eps, \delta}(\v', \z) \diff \rho_0(\v') }} \diff \rho_0(\z)  \diff\rho_0(\x')\\
&=   \int_{\Omega^{\delta,c}_0} {\frac{ \int_{\Omega^{\delta,c}_0}M_{\eps, \delta} (\x', \z)  \diff\rho_0(\x') M_{\eps, \delta} (\x, \z)  }{\int_{\Omega^{\delta,c}_0} M_{\eps, \delta}(\v', \z) \diff \rho_0(\v') }} \diff \rho_0(\z)  \\
&= \int_{\Omega^{\delta,c}_0} { M_{\eps, \delta} (\x, \z)  } \diff \rho_0(\z) \\
&=q_{\delta}(\x).
    \end{align*}
The same holds for the other variable by symmetry. 
\end{proof}
We give some estimates on the function $ h_{\eps, \delta}(\x, \x')$, which will be useful for the sequel.  
\begin{Lemma}\label{Lemma:BoundH}
    Let assumption~\ref{Assumptions} hold. Then there there exists  $\delta_0>0$ and $C>0$ such that for every $\delta\in (0, \delta_0)$  there exist $ \eps_\delta$
such that  
\begin{equation}
    \label{BoundforDivLemma48}
    \int_{\Omega^{\delta,c}_0\times \Omega^{\delta,c}_0} D(\x,\nabla g^*(\x')) h_{\eps, \delta}(\x, \x')  \diff (\rho_0\otimes \rho_0)(\x, \x') \leq C  \ell_d(\Omega_0^{\delta,c} )\,\eps^{\frac{2}{d+2}} , 
\end{equation}
and 
\begin{equation}\label{BoundOnheps}
\frac{\eps}{2}\int_{\Omega^{\delta,c}_0\times \Omega^{\delta,c}_0}   (h_\eps(\x, \x'))^2\diff (\rho_0\otimes \rho_0)(\x, \x')  \leq C\, \ell_d(\Omega_0^{\delta,c} )\, \eps^{\frac{2}{d+2}} , 
\end{equation}
for all $\eps
\leq \eps_\delta$.
 
\end{Lemma}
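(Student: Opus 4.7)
The plan is to reduce both estimates to the pointwise/integral controls on $M_{\eps,\delta}$ already provided by Lemma~\ref{Lemma:firstEstimates} (applicable on $\Omega_0^{\delta,c}$ thanks to the cone condition verified in Lemma~\ref{Lemma:CondConeCehck}), together with the support concentration that follows from Lemma~\ref{LemmaDiver}. First I would record three elementary facts. \emph{Support concentration:} on $\{M_{\eps,\delta}(\mathbf{a},\mathbf{b})>0\}$ one has $D(\mathbf{a},\nabla g^*(\mathbf{b}))<c_{\eps,\delta}(\mathbf{a})\le C\eps^{2/(d+2)}$ by \eqref{eq:maxcepsCeps}, and Lemma~\ref{LemmaDiver} then forces $\|\mathbf{a}-\mathbf{b}\|\le C\eps^{1/(d+2)}$. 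In particular, whenever $h_{\eps,\delta}(\x,\x')>0$ there is some $\z$ with both $M_{\eps,\delta}(\x,\z),\,M_{\eps,\delta}(\x',\z)>0$, so the triangle inequality yields $\|\x-\x'\|\le C\eps^{1/(d+2)}$ and one more application of Lemma~\ref{LemmaDiver} gives $D(\x,\nabla g^*(\x'))\le C\eps^{2/(d+2)}$ on the support of $h_{\eps,\delta}$. \emph{Uniform sup-norm bound:} $M_{\eps,\delta}\le c_{\eps,\delta}/\eps\le C\eps^{-d/(d+2)}$, again by \eqref{eq:maxcepsCeps}. \emph{Uniform lower bound on the denominator:} $\int_{\Omega_0^{\delta,c}} M_{\eps,\delta}(\v',\z)\,d\rho_0(\v')\ge C^{-1}$ by \eqref{eq:maxcepsqdelta}.

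With these in hand, the bound \eqref{BoundforDivLemma48} follows immediately by pulling the uniform support estimate $D(\x,\nabla g^*(\x'))\le C\eps^{2/(d+2)}$ out of the integral and invoking Lemma~\ref{Lemma:integraldaq} together with $q_\delta\le 1$ and $\rho_0\le\Lambda$:
\[
\int_{\Omega_0^{\delta,c}\times\Omega_0^{\delta,c}} D(\x,\nabla g^*(\x'))\, h_{\eps,\delta}(\x,\x')\,d(\rho_0\otimes\rho_0)\le C\eps^{2/(d+2)}\!\int q_\delta\,d\rho_0\le C\Lambda\,\eps^{2/(d+2)}\,\ell_d(\Omega_0^{\delta,c}).
\]
For \eqref{BoundOnheps}, the sup-norm bound and the denominator lower bound combine to give the pointwise control
\[
h_{\eps,\delta}(\x,\x')\le C\!\int M_{\eps,\delta}(\x',\z)M_{\eps,\delta}(\x,\z)\,d\rho_0(\z)\le C\eps^{-d/(d+2)}\,q_\delta(\x')\le C\eps^{-d/(d+2)}.
\]
Writing $h_{\eps,\delta}^2\le \|h_{\eps,\delta}\|_\infty\, h_{\eps,\delta}$, integrating, and applying Lemma~\ref{Lemma:integraldaq} once more gives $\tfrac{\eps}{2}\int h_{\eps,\delta}^2\,d(\rho_0\otimes\rho_0)\le C\eps\cdot\eps^{-d/(d+2)}\Lambda\,\ell_d(\Omega_0^{\delta,c})$, and the arithmetic identity $1-\tfrac{d}{d+2}=\tfrac{2}{d+2}$ yields the stated $\eps^{2/(d+2)}$ rate.

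I do not expect any serious obstacle: the proof is essentially a careful bookkeeping of already-established pointwise bounds combined with the marginal identities of Lemma~\ref{Lemma:integraldaq}. The only subtle point worth double-checking is that the cone hypothesis of Lemma~\ref{Lemma:firstEstimates} actually holds on $\Omega_0^{\delta,c}$ uniformly for small $\eps$; this is exactly what Lemma~\ref{Lemma:CondConeCehck} provides, so $c_{\eps,\delta}$ is well-defined and the bounds \eqref{eq:maxcepsCeps}--\eqref{eq:maxcepsqdelta} apply as needed.
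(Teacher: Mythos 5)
Your proposal is correct and follows essentially the same route as the paper: reduce to bounds on $M_{\eps,\delta}$ via the denominator lower bound \eqref{eq:maxcepsqdelta}, use the support concentration from \eqref{eq:maxcepsCeps} and Lemma~\ref{LemmaDiver} to control $D(\x,\nabla g^*(\x'))$ by $C\eps^{2/(d+2)}$ on $\{h_{\eps,\delta}>0\}$, bound $\|h_{\eps,\delta}\|_\infty$ by $C\eps^{-d/(d+2)}$, and finish both estimates with the marginal identity of Lemma~\ref{Lemma:integraldaq} and $q_\delta\le 1$. No gaps.
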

\begin{proof}
By \eqref{eq:maxcepsqdelta},  it is sufficient to verify \eqref{BoundforDivLemma48} and \eqref{BoundOnheps} by substituting  $h_{\eps, \delta}$ by  $M_{\eps, \delta}$. 
First we give some estimates on the support of $M_{\eps,\delta}$. The set of $(\x, \x')$ such that  $\int_{\Omega^{\delta,c}_0} M_{\eps, \delta} (\x', \z)  M_{\eps, \delta} (\x, \z)  \diff \rho_0(\z)>0$ is called 
$A_{\delta, \eps}$ . It can be also described as the set of $(\x, \x')$ such that  there exists $\z\in \Omega_{0}^{\delta,c}$ belonging to ${\rm supp}(M_{\eps, \delta} (\x, \cdot ))\cap {\rm supp}(M_{\eps, \delta} (\x', \cdot )) $, i.e., $\z$ satisfying
$ M_{\eps, \delta} (\x', \z)  M_{\eps, \delta} (\x, \z)>0.  $
Recall that 
$\int_{\Omega^{\delta,c}_0} M_{\eps, \delta} (\x', \z)  M_{\eps, \delta} (\x, \z)  \diff \rho_0(\z)>0$. 
\eqref{eq:maxcepsCeps} implies that there exists $C>0$ such that 
$ {\rm supp}(M_{\eps, \delta} (\x, \cdot ))\subset  \mathbb{B}\left(\x, C\eps^{\frac{2}{d+2}}\right),$
so that 
$$A_{\delta,\eps}\subset \left\{ (\x, \x')\in (\Omega^{\delta,c}_0)^2: \  \mathbb{B}\left(\x, C\eps^{\frac{2}{d+2}}\right)\cap \mathbb{B}\left(\x', C\eps^{\frac{2}{d+2}}\right)\neq \emptyset\right\}.$$
Therefore, by increasing \(C\), we have 
\begin{equation}
    \label{BallContainedheps}
    A_{\delta,\eps}\subset \left\{ (\x, \x')\in (\Omega^{\delta,c}_0)^2: \  \x'\in \mathbb{B}\left(\x, C\eps^{\frac{2}{d+2}}\right)\right\}.
\end{equation}
Now we give some estimates on the maximum value that $h_{\eps, \delta}$ can take. 
According to  \eqref{eq:maxcepsCeps},  there exists $\delta_0>0$ and $C(\delta_0)>0$ such that for every $\delta\in (0, \delta_0)$  there exist $ \eps_\delta$
such that for every $\eps\leq \eps_\delta$, it holds that 
$ M_{\eps, \delta} (\x', \z)  \leq   C(\delta_0)\eps^{\frac{-d}{d+2}}. $ 
Hence,  
\begin{align*}
     \int_{\Omega^{\delta,c}_0} M_{\eps, \delta} (\x', \z) M_{\eps, \delta} (\x, \z) d\rho_0(\z)& \leq C(\delta_0)\eps^{\frac{-d}{d+2}} \int_{\Omega^{\delta,c}_0} M_{\eps, \delta} (\x, \z) \diff\rho_0(\z)\\
     & = C(\delta_0)\eps^{\frac{-d}{d+2}} q_\delta(\x)\\
     & \leq  C(\delta_0)\eps^{\frac{-d}{d+2}}, 
\end{align*}
which implies 
\begin{equation}
    \label{boundheps}
    \max_{\z}{h_{\eps, \delta}(\x', \z)}\leq C(\delta_0) \eps^{-\frac{d}{d+2}}.
\end{equation}
At this point, we have all the ingredients to prove \eqref{BoundforDivLemma48} and \eqref{BoundOnheps}. Firstly,  
we find an upper bound for \eqref{BoundOnheps} as follows: 
\begin{align*}
    \frac{\eps}{2}\int_{\Omega^{\delta,c}_0\times \Omega^{\delta,c}_0} &  (h_\eps(\x, \x'))^2\diff (\rho_0\otimes \rho_0)(\x, \x') \\
    &\leq   \frac{\eps}{2} \int_{\Omega^{\delta,c}_0} \max_{\z}{h_{\eps, \delta}(\x', \z)} \int_{\Omega^{\delta,c}_0}  h_{\eps, \delta}(\x, \x')\diff \rho_0(\x) \diff\rho_0(\x')  \\
    &= \frac{\eps}{2} \int_{\Omega^{\delta,c}_0} \max_{\z}{h_{\eps, \delta}(\x, \z)} q_\delta(\x)\diff  \rho_0(\x') \\
    &\leq \frac{\eps}{2} \int_{\Omega^{\delta,c}_0} \max_{\z}{h_{\eps, \delta}(\x', \z)}\diff \rho_0(\x') \\
\text{(by \eqref{boundheps}) }    &\leq  \frac{C(\delta_0) \eps^{\frac{2}{d+2}}}{2}  \Lambda \ell_d(\Omega^{\delta,c}_0).
\end{align*}
Hence, \eqref{BoundOnheps} holds. To prove \eqref{BoundforDivLemma48}, we use first Lemma~\ref{LemmaDiver} and then \eqref{BallContainedheps}  to obtain 
\begin{align*}
    \int_{\Omega^{\delta,c}_0\times \Omega^{\delta,c}_0}& D(\x,\nabla g^*(\x')) h_{\eps, \delta}(\x, \x') \diff (\rho_0\otimes \rho_0)(\x, \x') \\&\leq \frac{\sigma_M(g)}{2} \int_{\Omega^{\delta,c}_0\times \Omega^{\delta,c}_0} \|\x-\x'\|^2 h_{\eps, \delta}(\x, \x')  \diff (\rho_0\otimes \rho_0)(\x, \x')\\
    &\leq C(\delta_0, g) \eps^{\frac{2}{d+2}} \int_{\Omega^{\delta,c}_0\times \Omega^{\delta,c}_0}  h_{\eps, \delta}(\x, \x')  \diff (\rho_0\otimes \rho_0)(\x, \x').
\end{align*}
Since, via Lemma~\ref{Lemma:integraldaq}, we have
$ \int_{\Omega^{\delta,c}_0} h_{\eps, \delta}(\x, \x')  \diff \rho_0(\x) =q_\delta(\x') \leq 1,  $
wich yields
\begin{align*}
    \int_{\Omega^{\delta,c}_0\times \Omega^{\delta,c}_0}& D(\x,\nabla g^*(\x')) h_{\eps, \delta}(\x, \x') \diff (\rho_0\otimes \rho_0)(\x, \x') \leq C(\delta_0, g) \eps^{\frac{2}{d+2}} \Lambda \ell_d\left(\Omega^{\delta,c}_0 \right).
\end{align*}
The result follows. 
\end{proof}

\subsection{Limit for convex supports}

Let us define the measure \(\nu_\varepsilon(\x,\x') = v_\eps(\x, \x')  \diff (\rho_0\otimes \rho_0)(\x, \x'),\)
meaning that if we define $\pi_\varepsilon:=(Id \times \nabla g^*)_\# \nu_\eps$ then we readily obtain
\[
\pi_\eps(x,y) =(Id \times \nabla g^*)_\# \nu_\eps=v_\eps(\x, \nabla g(\y))  \diff (\rho_0\otimes \rho_1)(\x, \y).
\]

In this section, we find the limit of $ \eps^{-\frac{2}{d+2}} (\mathbb{H}_\eps ( \pi_{\eps})-\mathcal{W}_2^2(\rho_0, \rho_1)), $
where 
$$ \mathbb{H}_\eps ( \pi)= \int \|\x-\y\|^2 \pi(\x,\y) \diff  (\rho_0\otimes \rho_1)(\x,\y) + \eps\, \left\|\frac{\diff \pi}{\diff (\rho_0\otimes\rho_1)}\right\|_{L^2(\rho_0\otimes\rho_1)}^2, $$
  in the case where $\Omega_0$ is a convex domain, which will be provided by the following asymptotic bound.  
\begin{Lemma}\label{Lemma:limsup}
    Let Assumption \ref{Assumptions} hold and $\Omega_0$ be convex. Then 
    $$ \limsup_{\eps\to 0^+} \frac{\mathcal{T}_{2, \varepsilon, (\cdot)^2}(\rho_0, \rho_1) - \mathcal{W}_2^2(\rho_0, \rho_1)}{\eps^{\frac{2}{d+2}}}\leq \frac{d^{\frac{d+4}{d+2}}(d+2)^{\frac{2}{d+2}}}{\left(\mathcal{H}^{d-1}(\mathcal{S}^{d-1}) \right)^{\frac{2}{d+2}}}   \int \big(\rho_0(\x)\rho_1[\nabla g^*(\x)]\big)^{-\frac{1}{(d+2)}}\diff \rho_0(\x). $$   
\end{Lemma}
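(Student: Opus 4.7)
The plan is to use the feasible coupling $\pi_\varepsilon=(\mathrm{Id}\times\nabla g^*)_{\#}\nu_\varepsilon$ from Section~\ref{sec:fixing_the_coupling} to bound $\mathcal{T}_{2,\varepsilon,(\cdot)^2}(\rho_0,\rho_1)\le \mathbb{H}_\varepsilon(\pi_\varepsilon)$, and then compute $\limsup_{\varepsilon\to 0}\varepsilon^{-2/(d+2)}(\mathbb{H}_\varepsilon(\pi_\varepsilon)-\mathcal{W}_2^2(\rho_0,\rho_1))$. Changing variables $\y=\nabla g^*(\x')$ inside the squared cost yields
\[
\mathbb{H}_\varepsilon(\pi_\varepsilon)=\int\|\x-\nabla g^*(\x')\|^2 v_\varepsilon\,\diff(\rho_0\otimes\rho_0)+\varepsilon\int v_\varepsilon^2\,\diff(\rho_0\otimes\rho_0).
\]
Since $v_\varepsilon\in\Pi(\rho_0,\rho_0)$ by Lemma~\ref{Lemma:integraldaq}, combining the marginal constraints with the Fenchel relation $g(\nabla g^*(\x'))=\langle\x',\nabla g^*(\x')\rangle-g^*(\x')$ collapses the cross term into a Bregman divergence, producing the identity
\[
\mathbb{H}_\varepsilon(\pi_\varepsilon)-\mathcal{W}_2^2(\rho_0,\rho_1)=2\int D(\x,\nabla g^*(\x'))\,v_\varepsilon\,\diff(\rho_0\otimes\rho_0)+\varepsilon\int v_\varepsilon^2\,\diff(\rho_0\otimes\rho_0),
\]
so the question reduces to controlling integrals of $D$ and of the square of $v_\varepsilon$ against $v_\varepsilon$. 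Splitting $v_\varepsilon$ according to its definition, Lemma~\ref{Lemma:BoundH} bounds the frame contribution by $C\ell_d(\Omega_0^{\delta,c})\varepsilon^{2/(d+2)}$, which after dividing by $\varepsilon^{2/(d+2)}$ vanishes once $\delta\to 0$ at the end.

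The main work is the glass contribution. The pushforward identity for $\mu_\varepsilon=(\nabla\phi_\varepsilon\times\nabla\phi_\varepsilon)_{\#}(\xi_\varepsilon\diff(\rho_0\otimes\rho_0))$ turns any integral $\int F\,u_\varepsilon\diff(\rho_0\otimes\rho_0)$ into $\int F(\nabla\phi_\varepsilon(\z),\nabla\phi_\varepsilon(\z'))\xi_\varepsilon(\z,\z')\diff(\rho_0\otimes\rho_0)$. By Lemma~\ref{Lemma:developmentDiveps}---whose analogue is valid for $\nabla\phi_\varepsilon$ as well, since $\nabla\phi_\varepsilon\to\mathrm{Id}$ in $C^1(\Omega_0^\delta)$ by \eqref{stabilty}---the quantity $D(\nabla\phi_\varepsilon(\z),\nabla g^*(\nabla\phi_\varepsilon(\z')))$ can be replaced by $\tfrac12\|\z-\z'\|^2_{\nabla^2 g^*(\z')}$ up to controlled errors. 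For the quadratic penalty one uses $\int u_\varepsilon^2\diff(\rho_0\otimes\rho_0)=\int \xi_\varepsilon(\z,\z')^2/[\rho_\varepsilon(\z)\rho_\varepsilon(\z')]\diff(\rho_0\otimes\rho_0)$ and Lemma~\ref{LemmaBoundsUpper}, which yields $\rho_\varepsilon\to 1$ and the normalising constant of $\xi_\varepsilon$ also tending to $1$ uniformly on $\Omega_0^\delta$, so that $\xi_\varepsilon\approx m_\varepsilon$ at leading order. Invoking Lemma~\ref{lemma:symetric} to substitute $m_\varepsilon$ by $\frac{1}{\varepsilon}(C_\varepsilon(\z')-\tfrac12\|\z-\z'\|^2_{\nabla^2 g^*(\z')})_+$, the two glass terms combine into a single explicit integral. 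A change of variables $\u=[\nabla^2 g^*(\z')]^{1/2}(\z-\z')$ and Proposition~\ref{Lemma:elementary} then evaluate it in closed form; the factor $\det(\nabla^2 g^*(\z'))^{-1/2}$, combined with the Monge--Amp\`ere relation $\det(\nabla^2 g^*)=\rho_0/\rho_1(\nabla g^*)$ and the definition of $C_\varepsilon$, produces the target density $(\rho_0(\z')\rho_1(\nabla g^*(\z')))^{-1/(d+2)}$ against $\diff\rho_0(\z')$, with the numerical constants collapsing into the stated expression.

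The hardest step is justifying these approximations uniformly. One must show that (i) the support of $u_\varepsilon$ is contained in an $\varepsilon^{1/(d+2)}$-neighbourhood of the diagonal, which follows from Lemma~\ref{lemma:symetric} together with the $C^1$-stability of $\nabla\phi_\varepsilon$ in \eqref{stabilty}; (ii) the H\"older-type errors from Lemmas~\ref{LemmaDiver} and~\ref{Lemma:developmentDiveps}, of order $\|\z-\z'\|^{2+\alpha}+\|\z-\z'\|^2\omega(\varepsilon)$, integrate against $m_\varepsilon$ to contribute only $o(\varepsilon^{2/(d+2)})$; and (iii) the inner integration may be extended from $\Omega_0$ to $\R^d$ when $\z'\in\Omega_0^{\delta/2}$, the overflow being absorbed in the frame estimate of Lemma~\ref{Lemma:BoundH}. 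Taking $\varepsilon\to 0$ first leaves a residual of order $\ell_d(\Omega_0^{\delta,c})$ which vanishes as $\delta\to 0$ and makes the upper bound match the lower bound established in Section~\ref{Section:lower}.
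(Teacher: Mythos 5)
Your proposal is correct and follows essentially the same route as the paper: the same coupling $\pi_\varepsilon=(\mathrm{Id}\times\nabla g^*)_\#\nu_\varepsilon$, the same reduction of $\mathbb{H}_\varepsilon(\pi_\varepsilon)-\mathcal{W}_2^2$ to a Bregman term plus the quadratic penalty, the same glass/frame split with Lemma~\ref{Lemma:BoundH} absorbing the frame, the same approximation chain (Lemmas~\ref{Lemma:developmentDiveps}, \ref{lemma:symetric}, \ref{LemmaBoundsUpper}, the change of variables and Proposition~\ref{Lemma:elementary}), and the same order of limits $\varepsilon\to0$ then $\delta\to0$. The only presentational difference is that you phrase the glass computation via the pushforward change of variables with $\nabla\phi_\varepsilon$ rather than via the explicit density $u_\varepsilon$ composed with $\nabla\phi_\varepsilon^*$, and you leave implicit the add-and-subtract of $C_\varepsilon$ that merges the linear and quadratic glass terms; both are equivalent to what the paper does.
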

\begin{proof}
First, as in the proof of Lemma~\ref{Lemma:boundonU}, there exists $\delta_0$ such that for all $\delta\leq \delta_0$ there exists  $\eps_\delta$ such that  
\begin{align}
  & \{ \x'\in \Omega_0: \exists \, \x\in \Omega_0^\delta : \  m_\eps(\nabla \phi_\eps^*(\x), \nabla \phi^*(\x'))>0 \} \subset \Omega_{0}^{\frac{\delta}{8}}, \label{contentionSuppo}
       \\      \label{contentionnablaphi}
&\nabla \phi_\eps^*\left( \Omega_{0}^{\delta} \right)\subset \Omega_{0}^{\frac{\delta}{2}}, \quad \quad \nabla \phi_\eps^*\left( \Omega_{0}^{\frac{\delta}{4}} \right)\subset \Omega_{0}^{\frac{\delta}{8}}  \quad {\rm and}\quad \nabla \phi_\eps^*\left( \Omega_{0}^{\frac{\delta}{8}} \right)\subset \Omega_{0}^{\frac{\delta}{16}} 
\end{align}
for all $\eps\leq \eps_\delta$. Therefore, from now on we assume that $\eps\leq \eps_\delta$ for $0<\delta\leq \delta_0$. 
We analyze first the linear term, where $\pi_0$ is the minimizer of \eqref{eq: OT}. By duality \eqref{eq:OTdual}, it holds that 
\begin{multline*}
    \frac12 \int \|\x-\y\|^2 \diff (\pi_\eps-\pi_0)(\x,\y)\\=  \int \frac12\|\x-\y\|^2 \diff\pi_\eps(\x,\y) - \int \left(\frac12\|\x\|^2-g^*(\x)\right) \diff \rho_0(\x)-\int \left(\frac12\|\y\|^2-g(\y)\right)  \diff \rho_1(\y).
\end{multline*}
Since $\pi_\eps\in \Pi(\rho_0, \rho_1)$, we get 
\begin{align*}
    \frac12 \int \|\x-\y\|^2 \diff (\pi_\eps-\pi_0)(\x,\y)&=\int D(\x, \y) \diff \pi_\eps(\x,\y)
    \\
    &=\int D(\x, \nabla g^*(\x')) \diff \nu_\eps (\x,\x').
\end{align*}
We observe that 
\begin{align*}
    &\int D(\x,\nabla g^*(\x')) \diff \nu_\eps(\x,\x') \\
    &= \underbrace{\int_{(\Omega^{\delta}_0\times \Omega_0) \cup ( \Omega_0\times \Omega^{\delta}_0)} D(\x,\nabla g^*(\x')) u_\eps(\x, \x')  \diff (\rho_0\otimes \rho_0)(\x, \x')}_{\mathbb{L}_1}\\
    &\qquad + \underbrace{\int_{(\Omega^{\delta,c}_0\times \Omega^{\delta,c}_0)} D(\x,\nabla g^*(\x')) h_{\eps, \delta}(\x, \x')  \diff (\rho_0\otimes \rho_0)(\x, \x')}_{\mathbb{L}_2} .
\end{align*}
We denote the first term by \(\mathbb{L}_1\) and the second by \(\mathbb{L}_2\). To bound \(\mathbb{L}_2\), Lemma~\ref{Lemma:BoundH} ensures that there exist \(\delta_0 > 0\) and \(C > 0\) such that for every \(\delta \in (0, \delta_0)\), there is an \(\varepsilon_\delta\) satisfying   
\begin{equation}
    \label{eq:boundLinearBAd}
\mathbb{L}_2=\int_{(\Omega^{\delta,c}_0\times \Omega^{\delta,c}_0)} D(\x,\nabla g^*(\x')) h_{\eps, \delta}(\x, \x')  \diff (\rho_0\otimes \rho_0)(\x, \x')\leq C\rho_0(\Omega_0^{\delta}) \eps^{\frac{2}{d+2}}
\end{equation}
for every $\eps\leq\eps_\delta$.
For $\mathbb{L}_1$ we observe that Lemma~\ref{LemmaBoundsUpper} (point  \ref{LemmaBoundsUpperPoint4}), \eqref{contentionSuppo} and \eqref{contentionnablaphi}, imply  that
$$ \mathbb{L}_1\leq  \frac{1}{1+o(1)}\int_{(\Omega^{\delta'}_0\times \Omega^{\delta'}_0)} D(\nabla \phi_\eps^*(\x),\nabla g^*(\nabla \phi_\eps^*(\x'))) m_\eps(\x, \x')  \diff (\rho_0\otimes \rho_0)(\x, \x'),  $$
for some $0<\delta'< \delta$. 
By 
Lemma~\ref{Lemma:developmentDiveps}, we have 
$$ \left\vert  D(\nabla \phi_\eps^*(\x),\nabla g^*(\nabla \phi_\eps^*(\x')))- \frac{1}{2}\|  \x-\x'\|_{\nabla^2 g^*(\x')}^2 \right\vert \leq   C \| \x-\x'\|^{2}( \| \x-\x'\|^{\alpha}+\omega(\eps)),  $$
for all $\x, \x'\in \Omega_0^{\delta'}$,
where $\omega(\eps)\to 0$ as $\eps\to 0$.
As a consequence, up to $o(1)$ terms, we have the bound
\begin{align*}
      \mathbb{L}_1&\leq \int_{(\Omega^{\delta'}_0)^2} \frac{1}{2}\|  \x-\x'\|_{\nabla^2 g^*(\x')}^2 m_\eps(\x, \x')  \diff (\rho_0\otimes \rho_0)(\x, \x')\\
        &\qquad + C\,\int \frac{1}{2}\|  \x-\x'\|^2 ( \| \x-\x'\|^{\alpha}+\omega(\eps))  m_\eps(\x, \x')  \diff (\rho_0\otimes \rho_0)(\x, \x'),
\end{align*}
where, due to \eqref{eq:concentratedSuppoet}, the last term is of order  $o(\varepsilon^{\frac{2}{d+2}})$. Therefore, it holds that
\begin{align*}
     \mathbb{L}_1  \le   \int_{(\Omega^{\delta'}_0)^2} \frac{1}{2}\|  \x-\x'\|_{\nabla^2 g^*(\x')}^2 m_\eps(\x, \x')  \diff (\rho_0\otimes \rho_0)(\x, \x') + o(\varepsilon^{\frac{2}{d+2}}). 
\end{align*}

 Lemma~\ref{lemma:symetric} yields the estimate
 \begin{equation}
     \label{lemma:symetricForProof} 
     m_\eps(\x, \x')=\frac{1}{\eps}\left( C_\eps(\x') -\frac{1}{2}\|  \x-\x'\|_{\nabla^2 g^*(\x')}^2\right)_+ + \omega(\eps) \eps^{\frac{-d}{d+2}}, 
 \end{equation}
which implies
\begin{multline*}
    \mathbb{L}_1 
    \leq  \eps^{-1}\int_{(\Omega^{\delta'}_0)^2} \frac{1}{2}\|  \x-\x'\|_{\nabla^2 g^*(\x')}^2 \left( C_\eps(\x') -\frac{1}{2}\|  \x-\x'\|_{\nabla^2 g^*(\x')}^2\right)_+  \diff (\rho_0\otimes \rho_0)(\x, \x')\\+ \omega(\eps) \eps^{\frac{-d}{d+2}} \int_{ {\rm supp}(m_\eps)} \frac{1}{2}\|  \x-\x'\|_{\nabla^2 g^*(\x')}^2  \diff (\rho_0\otimes \rho_0)(\x, \x')
    +o\left(\eps^{\frac{2}{d+2}}\right).
\end{multline*}
The bound \eqref{eq:concentratedSuppoet} implies 
\begin{multline*}
    \mathbb{L}_1 
    \leq  \eps^{-1}\int_{(\Omega^{\delta'}_0)^2} \frac{1}{2}\|  \x-\x'\|_{\nabla^2 g^*(\x')}^2 \left( C_\eps(\x') -\frac{1}{2}\|  \x-\x'\|_{\nabla^2 g^*(\x')}^2\right)_+  \diff (\rho_0\otimes \rho_0)(\x, \x')\\
    +o\left(\eps^{\frac{2}{d+2}}\right).
\end{multline*}
By adding and subtracting $ C_\eps(\x')$ inside the integral, the bound
\begin{align*}
   \mathbb{L}_1 &\leq    -\eps^{-1}\int_{(\Omega^{\delta'}_0)^2}  \left( C_\eps(\x') -\frac{1}{2}\|  \x-\x'\|_{\nabla^2 g^*(\x')}^2\right)_+^2 \diff (\rho_0\otimes \rho_0)(\x, \x')\\
     &\qquad
     + \underbrace{\eps^{-1}\int_{(\Omega^{\delta'}_0)^2} C_\eps(\x') \left( C_\eps(\x') -\frac{1}{2}\|  \x-\x'\|_{\nabla^2 g^*(\x')}^2\right)_+ \diff (\rho_0\otimes \rho_0)(\x, \x')}_{\mathbb{D}} +o\left( \eps^{\frac{2}{d+2}}\right) 
\end{align*}
follows. 
By using   \eqref{lemma:symetricForProof} and Lemma~\ref{LemmaBoundsUpper} (point  \ref{LemmaBoundsUpperPoint4}) again we get 
\begin{multline*}
    \mathbb{D}\leq \int C_\eps(\x') u_\eps(\x, \x') \diff (\rho_0\otimes \rho_0)(\x, \x')\\
    + w(\eps) \eps^{\frac{-d}{d+2}}\int_{C(\x')\geq \frac{1}{2}\|\x-\x'\|^2} C_\eps(\x') \diff (\rho_0\otimes \rho_0)(\x, \x') + o\left( \eps^{\frac{2}{d+2}}\right),
\end{multline*}
which yields 
$  \mathbb{D}\leq \int C_\eps(\x') \diff \rho_0(\x')+ o\left( \eps^{\frac{2}{d+2}}\right)$
and {\it a fortiori}
\begin{multline}\label{linearBound}
    \mathbb{L}_1 \leq    -\eps^{-1}\int_{(\Omega^{\delta'}_0)^2}  \left( C_\eps(\x') -\frac{1}{2}\|  \x-\x'\|_{\nabla^2 g^*(\x')}^2\right)_+^2 \diff (\rho_0\otimes \rho_0)(\x, \x')\\
    + \int C_\eps(\x') \diff \rho_0(\x')  +o\left( \eps^{\frac{2}{d+2}}\right)
\end{multline}

Now we deal with the penalty term. As before, we divide 
$$ 
\eps\ \left\|\frac{\diff \pi}{\diff (\rho_0\otimes\rho_1)}\right\|_{L^2(\rho_0\otimes\rho_1)}^2=\eps \int (v_\eps(\x, \x'))^2\diff (\rho_0\otimes \rho_0)(\x, \x') 
$$
into two additive terms
$$ \mathbb{Q}_1= \eps\int_{(\Omega^{\delta}_0\times \Omega_0) \cup ( \Omega_0\times \Omega^{\delta}_0)}  (u_\eps(\x, \x'))^2\diff (\rho_0\otimes \rho_0)(\x, \x') $$
and  
\begin{equation}
    \label{BoundQ2}
     \mathbb{Q}_2= \eps \int_{\Omega^{\delta,c}_0\times \Omega^{\delta,c}_0}  (h_{\eps, \delta}(\x, \x'))^2\diff (\rho_0\otimes \rho_0)(\x, \x')\leq   C{\eps^{\frac{2}{d+2}}} \ell_d(\Omega^{\delta,c}_0),
\end{equation}
where the bound is derived from Lemma~\ref{Lemma:BoundH}, and the constant \( C \) is independent of \( \delta \).
 We bound now $\mathbb{Q}_1$.  
 Since $C_\eps(\x)$ is H\"older continuous with constant $C\eps^{\frac{2}{d+2}}$ and owing to \eqref{stabilty}, it holds that 
 \begin{equation}
     \label{HolderC}
  \|C_\eps(\nabla \phi_\eps^*(\x)) - C_\eps(\x)\|\leq C \eps^{\frac{2}{d+2}} \omega(\eps) ,  \quad
 \text{with} \ \omega(\eps)\to 0.
 \end{equation}
  By 
Lemma~\ref{Lemma:developmentDiveps}, we also have 
$$ \left\vert  \| \nabla \phi_\eps^*(\x) -\nabla \phi_\eps^*(\x')\|_{    \nabla^2 g^*(\nabla \phi_\eps^*(\x'))}^2- \|  \x-\x'\|_{\nabla^2 g^*(\x')}^2 \right\vert \leq C \| \x-\x'\|^{2}( \| \x-\x'\|^{\alpha}+\omega(\eps)) $$
and 
$$ \left\vert  \| \nabla \phi_\eps^*(\x) -\nabla \phi_\eps^*(\x')\|_{    \nabla^2 g^*(\nabla \phi_\eps^*(\x))}^2- \|  \x-\x'\|_{\nabla^2 g^*(\x)}^2 \right\vert \leq C \| \x-\x'\|^{2}( \| \x-\x'\|^{\alpha}+\omega(\eps)). $$
 Since $\xi_{\eps}(\nabla \phi_\eps^*(\x), \nabla \phi_\eps^*(\x'))>0$ implies
 \begin{align*}
   C\eps^{\frac{1}{d+2}}& \geq  \|\nabla \phi_\eps^*(\x)-\nabla \phi_\eps^*(\x')\| \\
   &\geq \|\x-\x'\|-\|[\nabla \phi_\eps^*-{\rm Id}](\x)-[\nabla \phi_\eps^*-{\rm Id}](\x')\|\\
   &\geq \|\x-\x'\|\left(1-\|\nabla \phi_\eps^*-{\rm Id}\|_{\mathcal{C}^{1}(\Omega_0^\delta)}\right),
 \end{align*}

 were we used the lower bound on $\nabla^2 g^*(\x)$ and the fact that all norms on $\mathbb{R}^d$ are equivalent (which is hidden in the constant).
 Now, 
 using \eqref{stabilty} we get
 \begin{equation}
     \label{BoundnablaphiFinal1}
     \left\vert  \| \nabla \phi_\eps^*(\x) -\nabla \phi_\eps^*(\x')\|_{    \nabla^2 g^*(\nabla \phi_\eps^*(\x'))}^2- \|  \x-\x'\|_{\nabla^2 g^*(\x')}^2 \right\vert \leq C \omega(\eps) \eps^{\frac{2}{d+2}} 
 \end{equation}
and 
\begin{equation}
     \label{BoundnablaphiFinal2}
     \left\vert  \| \nabla \phi_\eps^*(\x) -\nabla \phi_\eps^*(\x')\|_{    \nabla^2 g^*(\nabla \phi_\eps^*(\x))}^2- \|  \x-\x'\|_{\nabla^2 g^*(\x)}^2 \right\vert \leq  C\omega(\eps) \eps^{\frac{2}{d+2}},
 \end{equation}
for all $(\x, \x')\in (\Omega^{\delta}_0\times \Omega_0) \cup ( \Omega_0\times \Omega^{\delta}_0) $ such that $\xi_{\eps}(\nabla \phi_\eps^*(\x), \nabla \phi_\eps^*(\x'))>0$ and
a function $\omega=\omega_\delta$ such that $\omega(\eps)\to 0$ as $\eps\to 0$.  Since $\nabla^2 g^* $ is uniformly continuous, 
by the same arguments as before, 
$$ \left\vert \|  \x-\x'\|_{\nabla^2 g^*(\x')}^2 - \|  \x-\x'\|_{\nabla^2 g^*(\x)}^2 \right\vert \leq \omega(\eps) \eps^{\frac{2}{d+2}},  $$
for all $(\x, \x')\in (\Omega^{\delta}_0\times \Omega_0) \cup ( \Omega_0\times \Omega^{\delta}_0) $ such that $\xi_{\eps}(\nabla \phi_\eps^*(\x), \nabla \phi_\eps^*(\x'))>0$. Therefore, this last fact combined with \eqref{HolderC}, \eqref{BoundnablaphiFinal1} and \eqref{BoundnablaphiFinal2} implies 
$$
    \xi_{\eps}(\nabla \phi_\eps^*(\x), \nabla \phi_\eps^*(\x')) 
    \leq  \frac{1}{ \eps}\bigg(C_\eps(\x)-\frac{1}{2}\|  \x-\x'\|_{\nabla^2 g^*(\x')}^2 
    +\omega(\eps) \eps^{\frac{2}{d+2}} \bigg)_+.  
$$
for some  function $\omega=\omega_\delta$ such that $\omega(\eps)\to 0$ as $\eps\to 0$. 
This combined with the point~\ref{LemmaBoundsUpperPoint4} of
Lemma~\ref{LemmaBoundsUpper} and \eqref{stabilty} yields
\begin{align*}
     u_\eps(\x, \x')&=\frac{\xi_{\eps}(\nabla \phi_\eps^*(\x), \nabla \phi_\eps^*(\x'))  } { \rho_\eps(\nabla \phi_\eps^*(\x))\rho_\eps(\nabla \phi_\eps^*(\x'))}\\
     & 
    \leq  \frac{1}{ \eps(1+o(1))}\bigg(C_\eps(\x)-\frac{1}{2}\|  \x-\x'\|_{\nabla^2 g^*(\x)}^2 
    +\omega(\eps) \eps^{\frac{2}{d+2}} \bigg)_+.  \\
     &\le\frac{1}{ \eps(1+o(1))}\bigg(C_\eps(\x)-\frac{1}{2}\|  \x-\x'\|_{\nabla^2 g^*(\x)}^2 
     \bigg)_+ +\omega(\eps) \eps^{-\frac{d}{d+2}},
\end{align*}
where the the error terms are uniform on the set of $(\x, \x')\in (\Omega^{\delta}_0\times \Omega_0) \cup ( \Omega_0\times \Omega^{\delta}_0) $ such that 
$
\xi_{\eps}(\nabla \phi_\eps^*(\x), \nabla \phi_\eps^*(\x'))>0$
and a fixed $0<\delta\leq \delta_0.
$ 
As a consequence,  it holds \textemdash up to $o(\eps^{\frac{2}{d+2}})$ additive terms, that
\begin{align*}
    \mathbb{Q}_1&= \eps\int_{(\Omega^{\delta}_0\times \Omega_0) \cup ( \Omega_0\times \Omega^{\delta}_0)}  (u_\eps(\x, \x'))^2\diff (\rho_0\otimes \rho_0)(\x, \x') \\
    &\leq \eps\int_{(\Omega^{\delta}_0\times \Omega_0) \cup ( \Omega_0\times \Omega^{\delta}_0)}  \bigg(C_\eps(\x)-\frac{1}{2}\|  \x-\x'\|_{\nabla^2 g^*(\x)}^2 
     \bigg)_+^2\diff (\rho_0\otimes \rho_0)(\x, \x')+ o(\eps^{\frac{2}{d+2}})\\
     &\leq \int_{(\Omega^{\delta'}_0)^2}  \bigg(C_\eps(\x)-\frac{1}{2}\|  \x-\x'\|_{\nabla^2 g^*(\x)}^2 
     \bigg)_+^2\diff (\rho_0\otimes \rho_0)(\x, \x')+ o(\eps^{\frac{2}{d+2}}),
\end{align*} 
for $0<\delta'<\delta$. 
The last display and \eqref{linearBound} yield 
\begin{multline*}
     2\, \mathbb{L}_1 +\mathbb{Q}_1 \leq    -\eps^{-1}\int_{(\Omega^{\delta'}_0)^2}  \left( C_\eps(\x') -\frac{1}{2}\|  \x-\x'\|_{\nabla^2 g^*(\x')}^2\right)_+^2 \diff (\rho_0\otimes \rho_0)(\x, \x')\\+
2\int C_\eps(\x') \diff \rho_0(\x') +o\left( \eps^{\frac{2}{d+2}}\right),
\end{multline*}
while  \eqref{eq:boundLinearBAd} and \eqref{BoundQ2} give 
$  2\, \mathbb{L}_2+\mathbb{Q}_2\leq C{\eps^{\frac{2}{d+2}}} \ell_d(\Omega^{\delta,c}_0). $
From the relation 
\[\mathbb{H}_\eps ( \pi_\eps)- \mathcal{W}_2^2(\rho_0, \rho_1)=2\, \mathbb{L}_1+2\, \mathbb{L}_1+\mathbb{Q}_1+\mathbb{Q}_2\] we get the estimate
\begin{multline*}
   \eps^{-\frac{2}{d+2}}\Big((\mathbb{H}_\eps ( \pi_\eps)- \mathcal{W}_2^2(\rho_0, \rho_1) \Big)\\
   \leq \underbrace{ 2\eps^{\frac{-2}{d+2}}\int  C_\eps(\x') \diff \rho_0(\x') - \eps^{\frac{d+4}{d+2}}\int_{(\Omega^{\delta}_0)^2} \bigg(C_\eps(\x)-\frac{1}{2}\|  \x-\x'\|_{\nabla^2 g^*(\x)}^2 
     \bigg)_+^2  \diff (\rho_0\otimes \rho_0)(\x, \x')}_{\text{from }2\mathbb{L}_1+\mathbb{Q}_1}\\
   + \underbrace{C \ell_d(\Omega^{\delta,c}_0)}_{\text{from }2\mathbb{L}_2+\mathbb{Q}_2}+o(1)
\end{multline*}
and we conclude as in the proof of the lower bound. 
\end{proof}
\subsection{General case} \label{section:General}
The aim of this section is to prove Theorem~\ref{Theorem:Main} in the general case, namely, to eliminate the assumption of convex support. To this end, we will construct the aforementioned {\it ``stained glass''} structure. Let
$ T_\delta=\{I_1\}_{i\in \N}  $ be a regular tessellation of squares $I_i$ of same length $ \beta>0 $ of $\R^d$.  Set 
$$\mathcal{G}(\beta)=\{ i\in \N: \ I_i\subset \Omega_0\}$$
and note that it is finite due to the compactness of $\Omega_0$. It is clear that we can write $\Omega_0$ as the union of  
$\Omega^{(\beta)}_0=  \bigcup_{i\in \mathcal{G}(\beta)} I_i $ and $\Omega^{(\beta,c)}_0=\Omega\setminus \bigcup_{i\in \mathcal{G}(\beta)} I_i$. We define the measure 
$$ \pi_{\beta, \eps}=\sum_{i\in \mathcal{G}(\beta)} \pi_{i}^{\eps} \chi_{\big(I_i\times \nabla g^* (I_i)\big)} + \gamma_{\beta, \eps} \chi_{\big(\Omega_0^{(\beta,c)}\times \Omega_0^{(\beta,c)}\big)}, $$
where
$ \pi_{i}^{\eps} \in \Pi(\chi_{I_i}\diff \rho_0, \chi_{[\nabla g^*(I_i)]} \diff \rho_1) $ is the true minimizer of 
$$ \frac{1}{2} \int \|\x-\y\|^2d\pi(\x,\y)+\frac{\varepsilon}{2}  \left\| \frac{d\pi}{  d(\rho_0 \otimes \rho_1) }\right\|_{L^2(\chi_{I_i}\diff \rho_0, \chi_{[\nabla g^*(I_i)]} \diff \rho_1)}^2,$$
among all $\pi\in \Pi(\chi_{I_i}\diff \rho_0, \chi_{[\nabla g^*(I_i)]} \diff \rho_1) $, and $ \gamma_{\beta, \eps} \in \Pi(\chi_{\Omega_0^{(\beta,c)}}\diff \rho_0, \chi_{\nabla^2 g^*(\Omega_0^{(\beta,c)})} \diff \rho_1) $ to be determined. By construction, 
$ \pi_{\beta, \eps}\in \Pi(\diff \rho_0, \diff \rho_1) .$ Moreover, since $\nabla g$ is the gradient of a convex function and  pushes  $ \chi_{[\nabla g^*(I_i)]}\diff \rho_1$ forward to $\chi_{[I_i]} \diff \rho_0$, it holds that 
$$ \mathcal{W}_2^2(\rho_0,\rho_1)=\mathcal{W}_2^2\left(\chi_{\left[\Omega_0^{(\beta,c)}\right]}\diff \rho_0,  \chi_{ \left[\nabla g^*\left(\Omega_0^{(\beta,c)}\right)\right]} \diff \rho_1\right)+\sum_{i\in \mathcal{G}(\beta)} \mathcal{W}_2^2\left(\chi_{[I_i]} \diff \rho_0, \chi_{[\nabla g^*(I_i)]}\diff \rho_1\right). $$
It also holds that 
\begin{multline*}
    \mathbb{H}_\eps ( \pi_{\beta, \eps})= \sum_{i\in \mathcal{G}(\beta)} \mathcal{T}_{2, \eps, (\cdot)^2}\left(\chi_{[I_i]} \diff \rho_0, \chi_{[\nabla g^*(I_i)]}\diff \rho_1\right)+ \\
    \int \|\x-\y\|^2 \diff \gamma_{\beta, \eps}(\x,\y) + \eps\cdot \left\|\frac{\diff \gamma_{\beta, \eps}}{\diff (\rho_0\otimes\rho_1)}\right\|_{L^2(\rho_0\otimes\rho_1)}^2.
\end{multline*}
We define \(\gamma_{\beta, \eps}\) following the same steps as \(\pi_\eps^{(2)}\) in the previous section. Note that  $\Omega_0^{(\beta,c)}$ satisfies the assumptions of    Lemma~\ref{Lemma:firstEstimates}.  Denote 
\[ 
W_\eps = \int \|\x - \y\|^2 \, \diff \gamma_{\beta, \eps}(\x, \y) + \eps\cdot \left\|\frac{\diff \gamma_{\beta, \eps}}{\diff (\rho_0 \otimes \rho_1)}\right\|_{L^2(\rho_0 \otimes \rho_1)}^2,
\]
and repeat the calculations yielding \eqref{eq:boundLinearBAd} and \eqref{BoundQ2}. As a consequence,
\[ 
\limsup_{\eps \to 0} \frac{W_\eps - \mathcal{W}_2^2\left(\chi_{[\Omega_0^{(\beta,c)}]} \diff \rho_0, \chi_{[\nabla g^* (\Omega_0^{(\beta,c)})]} \diff \rho_1\right)}{\eps^{\frac{2}{d+2}}} \leq C \ell_d \left(\Omega_0^{(\beta,c)}\right).
\]
Applying Lemma~\ref{Lemma:limsup} for each \(i \in \mathcal{G}(\beta)\) yields
\begin{align*}
    &\limsup_{\eps \to 0^+} \frac{\mathbb{H}_\eps (\pi_{\beta, \eps}) - \mathcal{W}_2^2(\rho_0, \rho_1)}{\eps^{\frac{2}{d+2}}} \\
    &  \qquad \leq\frac{d^{\frac{d+4}{d+2}}(d+2)^{\frac{2}{d+2}}}{\left(\mathcal{H}^{d-1}(\mathcal{S}^{d-1}) \right)^{\frac{2}{d+2}}}   \sum_{i \in \mathcal{G}(\beta)} \int_{I_i} \left(\rho_0(\x) \rho_1[\nabla g^*(\x)])\right)^{-\frac{1}{d+2}} \diff \rho_0(\x)+ C \ell_d \left(\Omega_0^{(\beta,c)}\right) \\
     & \qquad \leq\frac{d^{\frac{d+4}{d+2}}(d+2)^{\frac{2}{d+2}}}{\left(\mathcal{H}^{d-1}(\mathcal{S}^{d-1}) \right)^{\frac{2}{d+2}}}  \int_{\Omega \setminus \Omega_0^{(\beta,c)}} \left(\rho_0(\x) \rho_1[\nabla g^*(\x)])\right)^{-\frac{1}{d+2}} \diff \rho_0(\x) + C \ell_d \left(\Omega_0^{(\beta,c)}\right).
\end{align*} Theorem~\ref{Theorem:Main} follows by letting \(\beta \to 0\). \qed
\bibliography{biblio}

\end{document}